\newtheorem{theorem}{Theorem}[section]
\newtheorem{lemma}[theorem]{Lemma}
\newtheorem{definition}[theorem]{Definition}
\newtheorem{proposition}[theorem]{Proposition}
\newtheorem{remark}[theorem]{Remark}
\newtheorem{example}[theorem]{Example}
\def\thetheorem{\thesection.\arabic{theorem}}
\def\thesection{\arabic{section}}
\def\theequation {\thesection.\arabic{equation}}
\def\beq{\begin{equation}\displaystyle}
\def\eeq{\end{equation}}
\def\bel{\begin{equation} \displaystyle \begin{array}{l} }
\def\eel{\end{array} \end{equation} }
\def\bell{\begin{equation} \displaystyle \begin{array}{ll} }
\def\eell{\end{array} \end{equation} }
\def\bea{\begin{eqnarray}}
\def\eea{\end{eqnarray} }
\def\bean{\begin{eqnarray*}}
\def\eean{\end{eqnarray*} }
\newenvironment{proof}{\noindent{\bf Proof.~}}
{{\mbox{}\hfill {\small \fbox{}}\\}}
\renewcommand\appendix{\bigskip {\noindent \Large \bf Appendix}
\setcounter{section}{0}%
\setcounter{subsection}{0}%
\setcounter{equation}{0}%
\setcounter{theorem}{0}%
\def\thetheorem{A.\arabic{theorem}}
\def\theequation {A.\arabic{equation}}}
\newcommand{\dv}{\mathop{\rm div}\nolimits}
\def\NN{\mathbb{N}}
\def\RR{\mathbb{R}}
\def\ZZ{\mathbb{Z}}
\def\E{\mathbb{E}}
\def\EE{\mathbb{E}}
\def\PP{\mathbb{P}}
\def\ds{\displaystyle}
\def\bs{\bigskip}
\def\eps{\varepsilon}
\def\pa{\partial}
\def\calA{\mathcal{A}}
\def\calM{\mathcal{M}}
\def\calP{\mathcal{P}}
\def\calT{\mathcal{T}}
\def\calV{\mathcal{V}}
\def\smes{\mathcal{S}_\mathcal{M}}
\def\achapo{\widehat{a}}
\begin{document}

\title{Convergence order of upwind type schemes for transport equations with discontinuous coefficients}

\author{
Fran\c{c}ois Delarue\thanks{Laboratoire J.-A. Dieudonn\'e, 
UMR CNRS 7351, 
Univ. Nice, Parc Valrose, 06108 Nice Cedex 02, France. Email: \texttt{delarue@unice.fr}}, 
Fr\'ed\'eric Lagouti\`ere\thanks{Laboratoire de Mathématiques d'Orsay, Univ. Paris-Sud, CNRS, Université
Paris-Saclay, 91405 Orsay, France. Email: \texttt{frederic.lagoutiere@math.u-psud.fr}},
Nicolas Vauchelet\thanks{Sorbonne Universit\'e, UPMC Univ Paris 06, Laboratoire Jacques-Louis Lions 
UMR CNRS 7598, Inria, F75005 Paris, France, Email: \texttt{nicolas.vauchelet@upmc.fr}}
}

\maketitle

\begin{abstract}
An analysis of the error of the upwind scheme for transport equation 
with discontinuous coefficients is provided. 
We consider here a velocity field that is bounded and one-sided Lipschitz continuous.
In this framework, solutions are defined in the sense of measures along the lines of 
Poupaud and Rascle's work.
We study the convergence order of 
the upwind scheme in the Wasserstein distances. More precisely, we prove that in this setting
the convergence order is $1/2$. We also show the optimality of this result. In the appendix, we show that this 
result also applies to other "diffusive" "first order" schemes and to a forward semi-Lagrangian scheme. 
\end{abstract}

\bs

{\bf Keywords: } upwind finite volume scheme, forward semi-Lagrangian scheme, convergence order, conservative transport equation, continuity equation, measure-valued solution.

{\bf 2010 AMS subject classifications: } 35D30, 35L65, 65M12, 65M15.

\bs

\section{Introduction}

This paper is devoted to the numerical analysis of an upwind scheme for the
linear transport equation in conservative form (continuity equation) with discontinuous coefficients. 
In space dimension $d$, this equation reads 
\beq\label{eq:transp}
\pa_t\rho + \dv\big( a \rho\big)=0 , \qquad t>0,\quad x\in\RR^d,
\eeq
and is complemented with the initial condition $\rho(0,\cdot)=\rho^{ini}$.

We consider a rather weak regularity of the velocity, bounded and one-sided (right) Lipschitz continuous (OSL for short): 
$$
a \in L^\infty([0,+\infty);L^\infty(\RR^d))^d
$$
and there exists $\alpha \in L^1_{loc}([0,+\infty))$ such that 
\beq\label{eq:OSL}
\langle a(t,x)-a(t,y),x-y \rangle \leq \alpha(t) | x-y |^2, 
\eeq
where $\langle \cdot,\cdot\rangle$ stands for the Euclidean scalar product in $\RR^d$. 
Note that, as $a$ is assumed to be bounded, the right Lipschitz continuity coefficient $\alpha (t)$ 
is non-negative, for any $t > 0$. 

Since the velocity $a$ is not assumed Lipschitz continuous, the definition of solutions with the 
characteristic curves is not straightforward. 
In \cite{Filippov}, Filippov proposed
a notion of solution which extends the classical one. 
Using this so-called Filippov flow, Poupaud and Rascle proposed in \cite{PoupaudRascle} a notion
of solutions to the conservative linear transport equation \eqref{eq:transp}. They are defined as 
$Z_\# \rho^{ini}$ where $Z$ is the Filippov flow corresponding to the velocity $a$ (note that a stability 
result of the flow has been proved recently in \cite{Bianchini}).
In dimension $1$, these solutions are equivalent to
the duality solutions defined in \cite{bj1,BJpg}.
Duality solutions have also been defined in higher dimensions in \cite{bujama} 
but the theory is still not complete for the conservative transport equation.

In the present setting, solutions to the continuity equation can form Dirac masses and then are defined 
in the sense of measures. 
The numerical approximation of measure valued solutions to \eqref{eq:transp} requires a particular
care. In dimension $1$, Gosse \& James, \cite{GJ}, proposed a class of finite difference numerical schemes 
that includes the one dimensional upwind scheme.
Using the setting of duality solutions, the convergence of these schemes has been obtained in the sense 
of measures. However no error estimates are provided. 
More recently, Bouchut, Eymard \& Prignet, in \cite{BEP}, have proposed a different strategy in any dimension
with a finite volume scheme defined by the characteristics (the flow is assumed to be given).
The convergence is proved, on general admissible meshes, in the sense of measures, 
but no error estimates are provided. 

We here present an error analysis of an upwind scheme
for the continuity equation \eqref{eq:transp} when the coefficient $a$ is one-sided Lipschitz continuous.
More precisely, we prove that the order of convergence of the scheme is $1/2$ in Wasserstein distances $W_p$.

The convergence order of the upwind scheme 
for transport equations has received a lot of attention.
When the velocity field is Lipschitz continuous, this scheme is known to be first order 
convergent in the $L^\infty$ norm for any smooth initial data in ${\mathcal C}^2(\RR^d)$ {\em and for well-suited meshes},
provided a stability (Courant-Friedrichs-Lewy) condition holds: see \cite{bouche}.
However, for non-smooth initial data or on more general meshes, this order of convergence falls down to $1/2$, 
in $L^p$ norms.
This result has been first proved in the Cartesian framework by Kuznetsov in \cite{Kuznetsov} (this analysis is 
actually done for the entropy solutions of scalar (nonlinear) hyperbolic equations).
On quite general meshes, for $L^1(\RR^d)\cap BV(\RR^d)$ initial data, the result was tackled in \cite{MV} 
and \cite{caniveau}, in quite similar settings but with very different methods, in the $L^\infty$ in time 
and $L^1$ in space norm (we here will use the formalism developed in \cite{caniveau}). At last, let us mention that 
for initial data that are Lipschitz-continuous, the convergence to the $1/2 - \eps$ order in the $L^\infty$ norm 
(for any $\eps > 0$) is proved in \cite{M} and again in \cite{caniveau}. We also can mention \cite{Despres} and 
\cite{VV} for related results. 
We emphasize that the techniques used in \cite{M,MV} and \cite{caniveau} are totally different.
In the former, the technique is based on entropy estimates, whereas in the latter, the proof
relies on the construction of a stochastic characteristic defined as a Markov chain.

Up to our knowledge, there are no error estimates for the upwind scheme or, 
more generally, finite volume schemes, when the velocity field is less smooth. 
When the velocity field is given in $L^1((0,T);(W^{1,1}(\Omega))^d)$ for $\Omega \subset \RR^d$,
the {\em convergence} of numerical solutions, obtained thanks to an upwind scheme, towards renormalized 
solutions of the transport equation is studied in \cite{Boyer}. 

In this work we perform a numerical analysis of the upwind scheme in the weak framework where the 
velocity field is one-sided Lipschitz continuous.
More precisely, our main result is the following. 
\vspace{4pt}
\\
{\bf Result} (see Theorem \ref{TH} for a precise statement). 
Let $\rho^{ini}$ be a probability measure on $\RR^d$ such that $\int_{\RR^d} |x|^p \rho^{ini}(dx)<\infty$ 
for some $p\geq 1$.
We assume that the velocity field $a$ belongs to 
$L^\infty([0,+\infty);L^\infty(\RR^d))^d$ and satisfies the 
OSL condition \eqref{eq:OSL}. 
Let $\rho$ be the solution of the transport equation \eqref{eq:transp} with initial datum $\rho^{ini}$ 
(whose existence and uniqueness
is recalled in Theorem \ref{thPR} below). Let $\rho_{\Delta x}$ be the numerical approximation computed
thanks to the upwind scheme on a Cartesian grid mesh with space step $\Delta x$. Then, under a (usual)
Courant-Friedrichs-Lewy condition linking the time step $\Delta t$ and the cell size $\Delta x$, 
there exists a 
constant $C\geq 0$, depending only on 
$a$, $p$ and $\rho^{ini}$, such that we have 
\begin{equation}
\label{eq:final:estimate}
\forall\, t \geq 0, \qquad W_p\bigl(\rho(t),\rho_{\Delta x}(t)\bigr) \leq C \Bigl( \sqrt{t \Delta x} 
+ \Delta x \Bigr) \exp \biggl( 2 \int_{0}^t \alpha(s) ds \biggr).
\end{equation}

In this result, $W_p$ denotes the Wasserstein distance of order $p \geq 1$, whose definition will be
recalled below. 

\begin{remark}
A natural question is then: equipped with this result, and now assuming that the 
solution is, let us say, with 
bounded variation in space, do we recover a convergence order in a strong norm such as $L^1$ in space? 
The answer is yes, thanks to an interpolation estimate by Santambrogio (\cite{Filippo}), which proof is 
reported in 
the Appendix: let $f$, $g$ be two non-negative functions in $L^1(\RR^d)$ with mass equal to 1. 
There exists a constant $C \in \RR$ such that 
\[
|| f - g ||_{L^1} \leq C | f - g |_{BV}^{1/2} W_1(f,g)^{1/2}
\]
where $| \cdot |_{BV}$ denotes the total variation semi-norm on $\RR^d$. 
Thus, we recover a $1/4$ convergence order in $L^1$. This is not optimal: it is known that the 
convergence order is $1/2$, but, to reach $1/2$, one should use the additional smoothness of the solution in 
the $W_1$ estimate to obtain the convergence to the first order. 
\end{remark}

The main idea of the proof of the theorem is, in the spirit of \cite{caniveau},
to show that, similar to the exact solution, the numerical solution can be 
interpreted as the pushforward of the initial condition by a (numerical) flow. However, this flow is stochastic 
whilst that associated with the original equation is obviously deterministic. 
The numerical (deterministic) solution is
then represented as the {\em expectation} of the pushforward 
of the initial datum by the stochastic flow.

Finally, we emphasize that although our result is fully established for an upwind scheme,
the approach developed in this work can be easily extended to other schemes, as it is
explained in the appendix.
Meanwhile, it is worth mentioning that, 
although our strategy is shown to work on any general mesh for the semi-Lagrangian scheme discussed in the appendix, 
it works on a 
 Cartesian grid only for the upwind scheme under study.  
This is a major difference with \cite{caniveau}, in which the 
analysis of the upwind scheme is performed on a general mesh. The rationale for this difference is as follows:
In \cite{caniveau}, the strategy for handling 
the upwind scheme on non-Cartesian grids relies on a time reversal argument
and, somehow, on the analysis of the characteristics
associated with the velocity field $-a$. Whilst there is no difficulty for doing so in the Lipschitz setting, this is of course much more challenging under the weaker OSL condition 
\eqref{eq:OSL} since the ordinary differential equation 
driven by $-a$ is no more well-posed. We hope to address this question in future works.

The outline of the paper is the following. 
Section \ref{measure} is devoted to general definitions and notations 
that will be used throughout the
paper (in particular, we recall the notion of measure solutions to the transport equation 
\eqref{eq:transp} as defined in \cite{PoupaudRascle}). 
In Section \ref{sec:scheme}, we define the upwind scheme on a Cartesian mesh
and provide some basic properties for this scheme.
Section \ref{sec:ordre} is devoted to the statement and the proof of our main result: the convergence with
order $1/2$ of the upwind scheme on a Cartesian grid.
Finally, in order to illustrate the optimality of this order of convergence, we provide
in Section \ref{sec:num} first an explicit computation of the error in a simple case, and then 
some numerical experiments in dimension $1$.
An appendix provides an extension to other numerical (similar) schemes and the proof 
of the lemma used in the preceding remark.

\section{Measure solutions to the continuity equation}
\label{measure}

All along the paper, we will make use of the following notations.
We denote by 
$\calM_{b}(\RR^d)$ the space of finite signed measures on $\RR^d$ equipped with the 
Borel $\sigma$-field ${\mathcal B}(\RR^d)$. 
For $\rho \in {\cal M}_{b}(\RR^d)$, we denote by $|\rho|(\RR^d)$ its total variation, or total mass. 
The space of measures $\calM_b(\RR^d)$ is endowed with the weak topology $\sigma({\cal M}_b(\RR^d),{\mathcal C}_0(\RR^d))$, where 
${\mathcal C}_0(\RR^d)$ is the set of continuous function on $\RR^d$ that tend to 0 at $\infty$.
We then define $\smes :={\mathcal C}([0,+\infty);{\cal M}_b(\RR^d) 
- \sigma({\cal M}_b,{\mathcal C}_{0}(\RR^d)))$ and we equip it with the
topology of  
uniform convergence on finite intervals of the form $[0,T]$, 
with $T>0$. 

For $\rho$ a measure in $\calM_b(\RR^d)$ and $Z$ a measurable map 
(throughout the paper, {\em measurability}
is understood 
as measurability with respect to the Borel $\sigma$-fields when
these latter are not specified), 
we denote by $Z_\#\rho$ the pushforward
measure of $\rho$ by $Z$; by definition, it satisfies
$$
\int_{\RR^d} \phi(x)\, Z_\#\rho(dx) = \int_{\RR^d} \phi(Z(x))\,\rho(dx) 
\quad \mbox{for any } \phi \in {\mathcal C}_{0}(\RR^d), 
$$
or, equivalently, 
$$
Z_\#\rho(A) = \rho\bigl(Z^{-1}(A)\bigr) \quad \mbox{for any } A \in {\mathcal B}(\RR^d). 
$$

Moreover, we denote by $\calP(\RR^d)$ the subspace of
${\mathcal M}_{b}(\RR^d)$ 
made of 
 probability measures on $(\RR^d,{\mathcal B}(\RR^d))$. Also, we let $\calP_{p}(\RR^d)$ 
 the space of probability measures with finite
$p$-th order moment, $p\geq 1$:
$$
\calP_p(\RR^d) = \left\{\mu \in \calP(\RR^d) :  \int_{\RR^d} |x|^p \mu(dx) <\infty\right\}.
$$
Finally, for any probability space 
$(\Omega,\calA,\PP)$
and any integrable random variable $X$ from $(\Omega,\calA)$ into $(\RR,{\mathcal B}(\RR))$, 
 we denote by $\E(X)$ the expectation of $X$.
 
\subsection{Wasserstein distance}

The space $\calP_p(\RR^d)$ is endowed with the Wasserstein distance $W_p$ defined 
by (see e.g. \cite{Ambrosio,Villani1, Villani2})
\beq\label{defWp}
W_p(\mu,\nu)= \inf_{\gamma\in \Gamma(\mu,\nu)} 
\left\{\int_{\RR^d\times\RR^d} |y-x|^p\,\gamma(dx,dy)\right\}^{1/p}
 \eeq
where $\Gamma(\mu,\nu)$ is the set of measures on $\RR^d\times\RR^d$ 
with marginals $\mu$ and $\nu$, i.e.
\begin{align*}
\Gamma(\mu,\nu) = \left\{ \gamma\in \calP_p(\RR^d\times\RR^d); \ \forall\, 
\xi\in {\mathcal C}_0(\RR^d), \right. &
\int_{\RR^d\times\RR^d} \xi(y_1)\gamma(dy_1,dy_2) = \int_{\RR^d} \xi(y_1) \mu(dy_1), \\
& \left.\int_{\RR^d\times\RR^d} \xi(y_2)\gamma(dy_1,dy_2) = \int_{\RR^d} \xi(y_2) \nu(dy_2) \right\}.
\end{align*}
It is known that in the definition of $W_p$
the infimum is actually a minimum (see \cite{Villani1}). 
A map that fulfils the minimum in the definition \eqref{defWp}
of $W_p$ is called an {\it optimal plan}. The set of optimal plans is denoted by $\Gamma_0(\mu,\nu)$.
Thus for all $\gamma_0\in \Gamma_0(\mu,\nu)$, we have
$$
W_p(\mu,\nu)^p= \int_{\RR^d\times\RR^d} |y-x|^p\,\gamma_0(dx,dy).
$$

We will make use of the following properties of the Wasserstein distance.
Given two measurable maps $X,Y:\RR^d\to \RR^d$, we have the inequality
\begin{equation}
\label{eqWasser}
W_p(X_\#\mu,Y_\#\mu) \leq \|X-Y\|_{L^p(\mu)}.
\end{equation}
Indeed, 
$\pi=(X,Y)_\# \mu\in \Gamma(X_\#\mu,Y_\#\mu)$ 
and $\int_{\RR^d} |x-y|^p\,\pi(dx,dy)=\|X-Y\|_{L^p(\mu)}^p$.

More generally, for any probability space 
$(\Omega,{\mathcal A},\PP)$ 
and any two random variables 
$X,Y : \Omega \rightarrow \RR^d$ such that
$\EE[ \vert X \vert^p]$ and 
$\EE[ \vert Y \vert^p]$ are finite, we have 
\begin{equation}
\label{eqWasser:omega}
\bigl( W_p(X_\#\PP,Y_\#\PP) \bigr)^p \leq \EE \bigl( |X-Y|^p \bigr),
\end{equation}
where $X_\#\PP$ and $Y_\#\PP$ denote the respective 
distributions of $X$ and $Y$ under $\PP$. The proof 
follows from the same argument as above: $\pi = (X,Y)_{\#} \PP
\in \Gamma(X_\#\PP,Y_\#\PP)$ and 
$\int_{\RR^d} |x-y|^p\,\pi(dx,dy)=\EE[ \vert X-Y|^p]$.

\subsection{Weak measure solutions for linear conservation laws}

We recall in this section some useful results
on weak measure solutions to the conservative transport equation \eqref{eq:transp}, 
when driven by
an initial datum $\rho(0,\cdot) = \rho^{ini} \in {\mathcal M}_{b}(\RR^d)$
and a vector field $a$ that satisfies the OSL condition.

We start by the following definition of characteristics \cite{Filippov}:
\begin{definition}\label{DefFlow}
Let us assume that $a :  [0,+\infty) \times \RR^d \ni (t,x) \mapsto a(t,x)\in \RR^d$ is a (measurable) vector field. 
A Filippov characteristic $Z(\cdot;s,x)$ 
stemmed from $x\in \RR^d$ at time $s \geq 0$
is a continuous function $[s,+\infty) \ni t \mapsto Z(t;s,x) \in \RR^d$ such that
$Z(s;s,x)=x$, 
$\frac{\pa}{\pa t} Z(t;s,x)$ exists for a.e. $t\geq s$ and
$$
\frac{\pa}{\pa t} Z(t;s,x) \in \big\{ \textrm{\rm Convess} (a)(t,\cdot)\big\}
\bigl(Z(t;s,x)\bigr) \quad \mbox{for a.e. } t\geq s.
$$
From now on, we will use the notation $Z(t,x)=Z(t;0,x)$.
\end{definition}

In this definition, $\textrm{\rm Convess}(E)$ denotes the essential convex hull of
the set $E$: let us remind briefly the definition for the sake of completeness (see 
\cite{Filippov,AubinCellina} for more details). We denote by $\textrm{\rm Conv}(E)$ the classical convex
hull of $E$, i.e., the smallest closed convex set containing $E$. Given the vector field 
$a(t,\cdot):\RR^d\rightarrow\RR^d$, its essential convex hull at point $x$ is defined as
$$
\bigl\{ \textrm{\rm Convess} (a)(t,\cdot) \bigr\}(x)=\bigcap_{r>0} \bigcap_{N\in \mathcal{N}_0} 
\textrm{\rm Conv}\bigl[ a\bigl( t, B(x,r)\setminus N\bigr)\bigr]\,,
$$
where $\mathcal{N}_0$ is the set of zero Lebesgue measure sets.
Then, we have the following existence and uniqueness result of Filippov
characteristics under the assumption that the vector field $a$ is one-sided Lipschitz continuous.

\begin{theorem}  [\cite{Filippov}]
Let $a\in L_{loc}^1([0,+\infty);L^\infty(\RR^d))^d$ be a vector field satisfying the OSL condition \eqref{eq:OSL}.
Then there exists a unique Filippov flow $Z$
associated with this vector field, 
meaning that there exists a unique 
characteristic for any initial condition $(s,x) \in [0,+\infty) \times \RR^d$.
This flow does not depend on the choice of the representative 
(up to a $dt \otimes dx$ null set)
of the velocity field $a$ as long as this version 
satisfies the OSL condition pointwise.
Moreover, we have the semi-group property: For any $t,\tau,s \in [0,+\infty)$ such that $t \geq \tau \geq s$ and 
$x\in \RR^d$,
$$
Z(t;s,x) = Z(\tau;s,x) + \int_\tau^t a(\sigma,Z(\sigma;s,x))\,d\sigma.
$$
\end{theorem}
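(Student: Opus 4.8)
The plan is to read the Filippov characteristic as a solution of the differential inclusion $\dot Z(t) \in F(t,Z(t))$, with $F(t,x) := \{\mathrm{Convess}(a)(t,\cdot)\}(x)$, and to split the proof into four pieces: existence, uniqueness, independence of the representative, and the semi-group identity, the decisive input being the OSL condition in the uniqueness step. For existence I would invoke the standard theory of differential inclusions (as in \cite{Filippov,AubinCellina}). The essential convex hull is tailored so that, for a.e.\ $t$, the set-valued map $x\mapsto F(t,x)$ has nonempty, compact, convex values (the bound $|\xi|\le \|a(t,\cdot)\|_{L^\infty}$ for $\xi\in F(t,x)$ coming from the boundedness of $a$) and is upper semi-continuous in $x$, while $t\mapsto F(t,x)$ inherits the measurability and the $L^1_{loc}$ integrable bound from $a\in L^1_{loc}([0,+\infty);L^\infty(\RR^d))^d$. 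These are exactly the hypotheses under which a Carath\'eodory-type existence theorem produces, for each $(s,x)$, at least one absolutely continuous (indeed Lipschitz, since $|\dot Z|\le\|a\|_{L^\infty}$) curve $t\mapsto Z(t;s,x)$ with $Z(s;s,x)=x$ and $\dot Z(t;s,x)\in F(t,Z(t;s,x))$ for a.e.\ $t\ge s$.

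The heart of the proof is uniqueness, and this is where the OSL condition enters. The key preliminary lemma is that the OSL inequality \eqref{eq:OSL} propagates to the essential convex hull: for a.e.\ $t$, for all $x,y\in\RR^d$, all $\xi\in F(t,x)$ and all $\eta\in F(t,y)$, one has $\langle \xi-\eta,\,x-y\rangle \le \alpha(t)|x-y|^2$. I would prove this by approximation: for every $r>0$, $\xi$ is a convex combination $\sum_i\lambda_i a(t,x_i)$ with $x_i\in B(x,r)\setminus N$ (and likewise $\eta=\sum_j\mu_j a(t,y_j)$ with $y_j\in B(y,r)\setminus N$), and writing $\langle a(t,x_i)-a(t,y_j),x-y\rangle = \langle a(t,x_i)-a(t,y_j),x_i-y_j\rangle + \langle a(t,x_i)-a(t,y_j),(x-x_i)-(y-y_j)\rangle$, the first term is controlled by $\alpha(t)|x_i-y_j|^2$ through \eqref{eq:OSL} while the second is $O(r)$ because $a$ is bounded; letting $r\to0$ gives the claim. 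Granting this, if $Z_1,Z_2$ are two characteristics issued from the same $(s,x)$, then the map $t\mapsto |Z_1(t)-Z_2(t)|^2$ is Lipschitz and, for a.e.\ $t\ge s$,
\[
\tfrac12\frac{d}{dt}|Z_1(t)-Z_2(t)|^2 = \langle \dot Z_1(t)-\dot Z_2(t),\,Z_1(t)-Z_2(t)\rangle \le \alpha(t)\,|Z_1(t)-Z_2(t)|^2 .
\]
Since $|Z_1(s)-Z_2(s)|=0$, Gronwall's lemma forces $Z_1\equiv Z_2$ on $[s,+\infty)$, which is uniqueness.

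Finally, independence of the representative is immediate from the construction of $F$: by definition the essential convex hull already intersects over all Lebesgue-null spatial sets $N$, so modifying $a$ on a spatial null set at each fixed $t$ leaves $F(t,\cdot)$ unchanged, and modifying $a$ on a time-null set does not affect the time integrals; provided the new version still satisfies OSL pointwise, the set of characteristics, hence by uniqueness the flow, is unchanged. The semi-group property then follows from uniqueness: restricting $t\mapsto Z(t;s,x)$ to $[\tau,+\infty)$ yields a characteristic issued from $Z(\tau;s,x)$ at time $\tau$, so it must coincide with $Z(\cdot;\tau,Z(\tau;s,x))$; the displayed integral identity is then the integral form of the defining ODE on $[\tau,t]$, once one checks that along the Filippov trajectory the selected velocity equals $a(\sigma,Z(\sigma;s,x))$ for a.e.\ $\sigma$ (for the chosen pointwise-OSL representative). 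I expect the extension of OSL to the essential convex hull, together with the upper-semicontinuity and measurable-selection bookkeeping needed to apply the differential-inclusion existence theorem, to be the only genuinely delicate points; the remainder is Gronwall's inequality and soft arguments.
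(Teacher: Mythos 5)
The paper does not prove this theorem; it is quoted from \cite{Filippov} (see also \cite{PoupaudRascle}), so there is no in-paper argument to compare yours against. Your sketch follows the classical route: existence from the theory of differential inclusions with upper semicontinuous, compact convex, integrably bounded values, and uniqueness from the OSL condition via Gr\"onwall. The uniqueness computation is literally the one the paper carries out to prove Lemma \ref{lem:JacobZ}. Your key lemma, that the OSL inequality \eqref{eq:OSL} passes to the essential convex hull, is correct as sketched: the error terms are $O(r)$ by boundedness of $a$, and the intersection over null sets in the definition of $\mathrm{Convess}$ lets you discard any exceptional set, so the limit $r\to 0$ gives $\langle \xi-\eta,x-y\rangle\le\alpha(t)|x-y|^2$ as claimed.

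The one place where your argument, as written, would fail is the final integral identity. A Filippov characteristic only satisfies $\dot Z(\sigma)\in\{\mathrm{Convess}(a)(\sigma,\cdot)\}(Z(\sigma))$, and upgrading this to $\dot Z(\sigma)=a(\sigma,Z(\sigma))$ for a.e.\ $\sigma$ is \emph{not} true for an arbitrary representative satisfying OSL pointwise. Take $d=1$, $a(x)=1$ for $x<0$, $a(x)=-1$ for $x>0$, and $a(0)=c$ for any $c\in[-1,1]$: every such choice satisfies \eqref{eq:OSL} pointwise with $\alpha=0$, the unique Filippov solution started from $0$ is $Z\equiv 0$, yet $\int_\tau^t a(\sigma,Z(\sigma))\,d\sigma=c\,(t-\tau)$, which differs from $Z(t)-Z(\tau)=0$ unless $c=0$. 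So the displayed formula singles out a consistent representative --- one whose pointwise values on the (measure-zero) singular set agree with the velocity actually selected by the trajectories --- and proving that such a version exists, uniformly over all starting points, is the genuinely non-routine part of the cited theorem; it is where Filippov and Poupaud--Rascle do real work. You flag this as ``a check'' to be done, but it is not bookkeeping: without it the final identity is false for some admissible versions of $a$. The rest of your proposal (existence, uniqueness, independence of the representative, and the reduction of the semi-group property to uniqueness) is sound.
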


Importantly, we have the following Lipschitz 
continuous estimate on the Filippov characteristic:

\begin{lemma}[\cite{Filippov}]\label{lem:JacobZ}
Let $a \in 
L^1([0,+\infty),L^\infty(\RR^d))^d$
satisfy the OSL 
condition
\eqref{eq:OSL}
and 
$Z$ be the associated flow.
Then, for all $t \geq s$ in $[0,+\infty)$, we have
\beq\label{bound1Z}
L_Z(t,s) := \sup_{x, y\in \RR^d, x \neq y} \frac{ | Z(t,s,x) - Z(t,s,y) |}{| x - y | }  
\leq e^{\int_{s}^{t} \alpha(\sigma)\,d\sigma}
\eeq
($L_Z(t,s)$ is the Lipschitz constant of the flow $Z$). 
\end{lemma}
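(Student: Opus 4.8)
The plan is to control the squared distance between two Filippov characteristics by a Grönwall argument, the only genuine difficulty being that the characteristics are driven by the essential convex hull of $a$ rather than by $a$ itself, so the OSL condition \eqref{eq:OSL} must first be transferred to this convex hull. First I would fix $x \neq y$ and set $Z_x(t) = Z(t;s,x)$, $Z_y(t) = Z(t;s,y)$. By the semigroup property both maps are absolutely continuous on $[s,+\infty)$ (being, on every compact interval, integrals of bounded velocities), hence so is $\phi(t) := |Z_x(t)-Z_y(t)|^2$. For a.e. $t \geq s$ one has
$$
\phi'(t) = 2\big\langle Z_x(t)-Z_y(t),\, \dot Z_x(t)-\dot Z_y(t)\big\rangle,
$$
with $\dot Z_x(t) \in \{\mathrm{Convess}(a)(t,\cdot)\}(Z_x(t))$ and $\dot Z_y(t) \in \{\mathrm{Convess}(a)(t,\cdot)\}(Z_y(t))$.

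The key step is then to establish that, for a.e. $t$ and for all $p,q \in \RR^d$,
$$
u \in \{\mathrm{Convess}(a)(t,\cdot)\}(p),\ v \in \{\mathrm{Convess}(a)(t,\cdot)\}(q) \implies \langle u-v,\, p-q\rangle \leq \alpha(t)\,|p-q|^2.
$$
To prove this I would fix a representative of $a$ for which \eqref{eq:OSL} holds pointwise. By definition of the essential convex hull and Carathéodory's theorem, for every $r>0$ the vectors $u,v$ are limits of finite convex combinations $\sum_i \lambda_i\, a(t,p_i)$ and $\sum_j \mu_j\, a(t,q_j)$, with $p_i \in B(p,r)$, $q_j \in B(q,r)$ lying outside the fixed null set on which OSL fails. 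Writing
$$
\langle a(t,p_i)-a(t,q_j),\, p-q\rangle = \langle a(t,p_i)-a(t,q_j),\, p_i-q_j\rangle + \langle a(t,p_i)-a(t,q_j),\, (p-p_i)-(q-q_j)\rangle,
$$
the first term is bounded by $\alpha(t)|p_i-q_j|^2 \leq \alpha(t)(|p-q|+2r)^2$ via \eqref{eq:OSL}, while the second is bounded in absolute value by $4\|a\|_\infty\, r$ using the boundedness of $a$. Summing against the weights $\lambda_i\mu_j$ and letting $r\to 0$ yields the claim.

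Applying the claim with $p=Z_x(t)$, $q=Z_y(t)$ gives $\phi'(t) \leq 2\alpha(t)\,\phi(t)$ for a.e. $t$, and the integral form of Grönwall's lemma (valid since $\phi$ is absolutely continuous and $\alpha \in L^1_{loc}$) yields
$$
|Z_x(t)-Z_y(t)|^2 = \phi(t) \leq \phi(s)\,\exp\Big(2\int_s^t \alpha(\sigma)\,d\sigma\Big) = |x-y|^2\,\exp\Big(2\int_s^t \alpha(\sigma)\,d\sigma\Big).
$$
Taking square roots, dividing by $|x-y|$, and passing to the supremum over $x\neq y$ then gives the stated bound $L_Z(t,s) \leq e^{\int_s^t \alpha(\sigma)\,d\sigma}$. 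The main obstacle is the transfer of the one-sided Lipschitz inequality from the pointwise values of $a$ to its essential convex hull; once that is secured, the remainder is a routine Grönwall estimate.
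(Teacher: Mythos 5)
Your proof is correct and follows essentially the same route as the paper: a Gr\"onwall estimate on $|Z(t;s,x)-Z(t;s,y)|^2$ obtained by differentiating the squared distance and applying the OSL condition. The only difference is that you additionally prove that the one-sided Lipschitz inequality transfers from the pointwise values of $a$ to its essential convex hull, a point the paper bypasses by invoking the identity $Z(t;s,x)=Z(\tau;s,x)+\int_\tau^t a(\sigma,Z(\sigma;s,x))\,d\sigma$ from Filippov's theorem (which already identifies $\partial_t Z$ with $a(t,Z)$ a.e.); your extra step is a valid and more self-contained justification of the same inequality.
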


\begin{proof}
For $x, y \in \RR^d$, we compute
$$
\frac {d}{dt} | Z(t,s,x) -Z(t,s,y)|^2 = 2\bigl\langle a(t,Z(t,s,x))-a(t,Z(t,s,y)),
Z(t,s,x)-Z(t,s,y)\bigr\rangle.
$$
Using the OSL estimate \eqref{eq:OSL}, we deduce
$$
\frac {d}{dt} | Z(t,s,x) -Z(t,s,y)|^2 \leq 2 \alpha(s) | Z(t,s,x) -Z(t,s,y)|^2.
$$
Thanks to a Gr\"onwall lemma, we get
$$
| Z(t,s,x) -Z(t,s,y)|^2 \leq e^{\int_{s}^{t} 2 \alpha(\sigma)\,d\sigma} |x-y|^2,$$
which completes the proof.
\end{proof}

An important consequence of this result is the existence and uniqueness
of weak measure solutions for the conservative linear transport equation.
This has been obtained by Poupaud \& Rascle in \cite{PoupaudRascle}.
\begin{theorem}[\cite{PoupaudRascle}]\label{thPR}
Let $a\in L^1_{loc}([0,+\infty),L^\infty(\RR^d))^d$ be a vector field satisfying
the OSL condition \eqref{eq:OSL}. Then, for any $\rho^{ini}\in \calM_b(\RR^d)$,
there exists a unique measure solution $\rho$ in $\smes$ to the conservative
transport equation \eqref{eq:transp} with initial datum $\rho(0,\cdot) = \rho^{ini}$ 
such that $\rho(t)=Z(t)_\#\rho^{ini}$, where $Z$ is
the unique Filippov flow, i.e. for any $\phi\in {\mathcal C}_0(\RR^d)$, we have
$$
\int_{\RR^d} \phi(x) \rho(t,dx) = \int_{\RR^d} \phi(Z(t,x)) \rho^{ini}(dx),
\qquad \mbox{for} \quad t\geq 0.
$$
\end{theorem}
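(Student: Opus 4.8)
The plan is to separate the statement into a constructive existence part and a uniqueness part, the latter being the genuine difficulty. For existence I would simply set $\rho(t) := Z(t)_\#\rho^{ini}$, with $Z$ the Filippov flow supplied by the previous theorem, and verify the two requirements. To see that $\rho\in\smes$, I would exploit the boundedness of $a$: the semigroup identity gives $|Z(t,x)-Z(s,x)|\le \|a\|_{L^\infty}\,|t-s|$ uniformly in $x$, so for every $\phi\in\mathcal{C}_0(\RR^d)$ the pairing $t\mapsto\int_{\RR^d}\phi(Z(t,x))\,\rho^{ini}(dx)$ is continuous by dominated convergence, which is exactly the weak continuity of $t\mapsto\rho(t)$ required by the definition of $\smes$.

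To check that $\rho$ solves \eqref{eq:transp} in the distributional sense, I would test against $\phi\in\mathcal{C}_c^\infty([0,+\infty)\times\RR^d)$. By the semigroup property the characteristic satisfies $\frac{\pa}{\pa t}Z(t,x)=a(t,Z(t,x))$ for a.e.\ $t$, so the chain rule yields
\[
\frac{d}{dt}\phi\bigl(t,Z(t,x)\bigr)=\pa_t\phi\bigl(t,Z(t,x)\bigr)+\bigl\langle a(t,Z(t,x)),\nabla_x\phi(t,Z(t,x))\bigr\rangle
\]
for $\rho^{ini}$-a.e.\ $x$ and a.e.\ $t$. Integrating in time (the boundary term at $+\infty$ vanishing by the support of $\phi$) and then in $x$ against $\rho^{ini}$, Fubini produces precisely the weak formulation of the continuity equation with datum $\rho^{ini}$. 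The only delicate point is that the flux be read through the Filippov field $a$ itself, which is legitimate because the semigroup identity is written with $a$ rather than with its essential convex hull.

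The main obstacle is uniqueness, and the difficulty is structural. The textbook route would pair a candidate solution with solutions of the backward adjoint equation driven by $-a$; but under the mere OSL condition \eqref{eq:OSL} the reverse-time field is not one-sided Lipschitz, its ordinary differential equation is ill-posed, and no duality argument of that kind is available. I would therefore argue forward rather than backward. The route I favour is a superposition principle: since $a$ is bounded, any nonnegative finite measure solution of \eqref{eq:transp} can be represented as $\rho(t)=(e_t)_\#\eta$ for some measure $\eta$ carried by the integral curves of $a$, where $e_t$ denotes evaluation at time $t$. The uniqueness of the Filippov characteristic issued from each point --- granted by the cited flow theorem and quantified by the Lipschitz estimate of Lemma \ref{lem:JacobZ} --- then forces $\eta$-almost every curve to coincide with $t\mapsto Z(t,x)$, whence $\rho(t)=Z(t)_\#\rho^{ini}$. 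A general signed datum is recovered by linearity through its Jordan decomposition, each part being transported by the same flow.

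An alternative, closer to the original argument of Poupaud and Rascle, is to regularize $a$ into smooth fields $a_\eps$ that preserve the OSL constant $\alpha$, to use the $\eps$-uniform Lipschitz bound of Lemma \ref{lem:JacobZ} to extract a limit of the classical flows $Z_\eps$, and to identify this limit with $Z$ by the uniqueness already established for the Filippov flow; uniqueness of the pushforward then follows. In both routes the crux is identical: the absent backward well-posedness must be replaced by the forward uniqueness of Filippov trajectories, which is exactly what the one-sided Lipschitz bound \eqref{eq:OSL} secures and what the flow theorem provides.
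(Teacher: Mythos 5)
This theorem is not proved in the paper: it is imported from Poupaud and Rascle \cite{PoupaudRascle}, and the authors immediately add that the identity $\rho(t)=Z(t)_\#\rho^{ini}$ is ``somehow understood as a \emph{definition} of solution to the Cauchy problem''. So there is no in-paper argument to compare yours against; what can be assessed is whether your sketch would stand on its own as a proof of the cited result.

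The existence half is essentially fine (weak-$*$ continuity from the integral identity $Z(t)=Z(\tau)+\int_\tau^t a(\sigma,Z(\sigma))\,d\sigma$, then the chain rule and Fubini), but the uniqueness half has a genuine gap that is in fact the very point the paper's remark is flagging: you never say what a ``measure solution'' of \eqref{eq:transp} \emph{is}. When $\rho(t)$ is allowed to be singular and $a$ is only an $L^\infty$ equivalence class, the flux $a\rho$ in the distributional formulation is not well defined --- it changes if you change $a$ on a Lebesgue-null set that $\rho(t)$ charges (e.g.\ the support of a Dirac mass). Consequently the class of ``solutions'' among which you claim uniqueness is not pinned down, and your appeal to the superposition principle inherits the same ambiguity: Ambrosio's theorem (itself a substantial external result that you invoke without statement or hypotheses) yields that $\eta$-a.e.\ curve satisfies $\dot\gamma(t)=a(t,\gamma(t))$ for the \emph{particular Borel representative used in the weak formulation}, so the whole argument is representative-dependent. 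Poupaud and Rascle resolve this by building the definition of solution around the flow (equivalently, around a specific averaged flux), which is why the present paper can treat the pushforward formula as a definition. The remaining ingredient of your argument --- that a Carath\'eodory integral curve of the pointwise-OSL representative must coincide with the Filippov characteristic, by Gr\"onwall applied to $\tfrac{d}{dt}|\gamma(t)-Z(t,\gamma(0))|^2\le 2\alpha(t)|\gamma(t)-Z(t,\gamma(0))|^2$ --- is correct and is the right replacement for the unavailable backward duality, but it only closes the proof once the solution concept has been fixed.
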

Note that actually, in this result, the expression $\rho(t)=Z(t)_\#\rho^{ini}$ is somehow 
understood as a {\em definition} of solution to the Cauchy problem.
From now on, 
we will interpret the solutions to 
\eqref{eq:transp}
in this sense. 

To conclude this section, we recall the stability estimate of the flow due to Bianchini and 
Gloyer \cite[Theorem 1.1]{Bianchini}. 
This estimate reads as a bound for the difference between 
the flows $Z_{1}$ and $Z_{2}$
associated with two velocity fields 
$a_1$ and $a_2$ in $L^1([0,\infty),L^\infty(\RR^d))^d$ 
satisfying the OSL condition \eqref{eq:OSL}.
For any $r>0$ any $x\in B(0,r)$, 
it holds that
\begin{equation}
\label{eq:bianchini:gloyer}
|Z_1(s,t,x)-Z_2(s,t,x)|^2 \leq C\int_s^t \|a_1(\sigma,.)-a_2(\sigma,.)\|^{1/d}_{L^1(B(0,2R))}\,d\sigma,
\end{equation}
where $R=r+a_\infty T$,
and $a_\infty=\max\{\|a_1\|_\infty,\|a_2\|_\infty\}$
and $C$ is a constant that only depends on the dimension.

Remark that this estimate, which is also proved in the same paper to be optimal, is a bad hint 
to obtain a result as the one we will prove here, because the stability of the characteristics with respect to 
perturbations of the velocity field decreases as the space dimension increases. 
Based on this estimate, 
one could imagine 
that a similar phenomenon 
should occur when estimating the error of a numerical scheme
for 
\eqref{eq:transp}. Indeed 
(as it will be the case in the next section), 
the analysis of the scheme should consist in
regarding the numerical solution as the solution of 
an equation of the same type as 
\eqref{eq:transp}
but driven by an approximating velocity field. 
Then, it would be tempting to compare both solutions by 
means of 
\eqref{eq:bianchini:gloyer}. However, our result shows that 
this strategy is non-optimal, at least for the scheme studied in the paper. 
Our analysis exploits the fact that, in our case,
the 
structure of the approximating velocity
is actually very close to that of the original velocity field. 

\section{Definition of the scheme and basic properties}
\label{sec:scheme}

\subsection{Numerical discretization}

From now on, we consider a velocity field $a\in L^\infty([0,+\infty),L^\infty(\RR^d))^d$ 
and we choose a representative $\achapo$ in the equivalence class of $a$ in 
$L^\infty([0,+\infty),L^\infty(\RR^d))$: $\achapo$ 
is 
defined everywhere
and is 
jointly measurable in time and space; it is $dt \otimes dx$ a.e. equal to $a$; and, it satisfies the condition 
\eqref{eq:OSL} everywhere.
In order to simplify the presentation, we will keep the notation $a$ instead of $\achapo$ and 
we write $a=(a_1,\ldots,a_d)$.

We denote by $\Delta t > 0$ the time step and consider a Cartesian grid with
step $\Delta x_i > 0$ in the $i$th direction, $i=1,\ldots,d$, and $\Delta x = \max_i \Delta x_i$. 
For $i = 1,\dots,d$, we note $e_i$ the $i$th vector of the canonical basis of $\RR^d$. 
We define the multi-indices $J=(J_1, \ldots, J_d)\in \ZZ^d$, the space cells $C_J = 
[(J_1-\frac{1}{2})\Delta x_1,(J_1+\frac{1}{2})\Delta x_1)\times \ldots 
[(J_d-\frac{1}{2})\Delta x_d,(J_d+\frac{1}{2})\Delta x_d)$ and their center 
$x_J = (J_1\Delta x_1, \ldots, J_d \Delta x_d)$. Finally, we set $t^n = n \Delta t$. 

For a given non-negative measure $\rho^{ini}\in
\calP(\RR^d)$, we define for $J\in \ZZ^d$, 
\beq\label{disrho0}
\rho_{J}^0 = \int_{C_J} \rho^{ini}(dx)\geq 0, 
\eeq 
which actually is to be understood as $\rho_{J}^0 = \rho^{ini} (C_J)$. 
Since $\rho^{ini}$ is a probability
measure, the total mass of the system is $\sum_{J\in \ZZ^d} \rho_{J}^0 = 1$. 
We denote by $\rho_J^n$ an approximation of the value $\rho(t^n) (C_J)$, for $J\in \ZZ^d$, 

and we propose to compute this approximation by using the upwind scheme, 
that is to say, we let by induction:
\begin{multline}\label{dis_num}
\ds \rho_{J}^{n+1} = \rho_{J}^n - \sum_{i=1}^{d} \frac{\Delta t}{\Delta x_i}
\big(({a_i}^n_{J})^+ \rho_{J}^n - ({a_i}^n_{J+e_i})^- \rho_{J+e_i}^n 
-({a_i}^n_{J-e_i})^+ \rho_{J-e_i}^n + ({a_i}^n_{J})^- \rho_{J}^n \big), \\
n \in \NN, J \in \ZZ^d. 
\end{multline}
The notation $(a)^+ = \max\{0,a\}$ stands for the positive part of the real number $a$
and $(a)^- = \max\{0,-a\}$ for the negative part.
The numerical velocity is defined, for $i=1,\ldots,d$, by
\begin{equation}
\label{def:ai}
{a_i}_{J}^n = \frac{1}{\Delta t} \int_{t^n}^{t^{n+1}} a_i(s,x_{J})\,ds.
\end{equation}

\begin{remark}\label{otherupwind}
There is at least one other traditional upwind scheme, 
for which the velocity is to be computed at the interface: $a_{i+1/2}^n$. 
The difficulty with this one is explained at the end of Section \ref{ofvs} in the appendix.
\end{remark}

\begin{remark}
The discretization of the velocity requires the computation of the mean value
in time of the velocity field in formula \eqref{def:ai}.
However, 
if one assumes the velocity field to be Lipschitz 
continuous in time, uniformly in space, then 
${a_i}_{J}^n$ can be replaced 
by $a_i(t^n,x_{J})$.  
We refer to Remark 
\ref{rem:a:Lip:3} below for a short account on the new form 
of the main estimate
\eqref{eq:final:estimate}.
\end{remark}

\begin{remark}\label{ex1D}
In dimension 1, Scheme \eqref{dis_num} reads
$$
\rho_{j}^{n+1} = \rho_j^n - \frac{\Delta t}{\Delta x}\Big((a_j^n)^+ \rho_j^n - (a_{j+1}^n)^- \rho_{j+1}^n 
- (a_{j-1}^n)^+ \rho_{j-1}^n + (a_j^n)^-\rho_j^n\Big).
$$
We will make use of the following interpretation of this scheme. 
Defining $\rho^n_{\Delta x} = \sum_{j\in \ZZ} \rho_j^n \delta_{x_j}$,
we construct the approximation at time $t^{n+1}$ with the following two steps:
\begin{itemize}
\item The Dirac mass $\rho_j^n$, located at position $x_j$, moves with velocity $a_j^n$ to the position 
$x_j+a_j^n \Delta t$. 
Assuming a Courant-Friedrichs-Lewy condition $||a||_\infty \Delta t \leq \Delta x$, the point $x_j+a_j^n\Delta t$
belongs to the interval $[x_j,x_{j+1}]$ if $a_j^n\geq 0$, or to the interval $[x_{j-1},x_{j}]$ if $a_j^n \leq 0$.
\item Then we split the mass $\rho_j^n$ between $x_j$ and $x_{j+1}$ if $a_j^n\geq 0$ 
or between $x_{j-1}$ and $x_j$ if $a_j^n\leq 0$. 
We use a linear splitting rule: say whenever $a_j^n\geq 0$, 
the mass $\rho_j^n \times a_j^n \Delta t/\Delta x$ is sent to grid point $x_{j+1}$ whereas $\rho_j^n \times 
(1 - a_j^n \Delta t/\Delta x)$ is sent to grid point $x_{j}$. We let the reader verify that this gives the scheme defined above. 
\end{itemize}
\end{remark}

\subsection{Properties of the scheme}

Throughout the analysis, $a_\infty$ stands for $|| a ||_{L^\infty(\RR \times \RR^d)}$
whenever $a \in L^\infty([0,+\infty),L^\infty(\RR^d))^d$. 
We will assume further that $\Delta x \leq 1$.
\vspace{5pt}
\\
The following lemma states a Courant-Friedrichs-Lewy-like (CFL) condition ensuring that the scheme 
preserves nonnegativity:
\begin{lemma}\label{lem:CFL}
Let $a\in L^\infty([0,+\infty),L^\infty(\RR^d))^d$
and let $(\rho_{J}^0)_{J\in \ZZ^d}$ be defined by \eqref{disrho0} for some
$\rho^{ini}\in \calP_p(\RR^d)$, $p\geq 1$.
Assume further that the following CFL condition holds:
\begin{equation}\label{CFL}
a_\infty \sum_{i=1}^d \frac{\Delta t}{\Delta x_i} \leq 1.
\end{equation}
Then the sequence $(\rho_J^n)_{n,J}$ computed thanks to the scheme 
defined in \eqref{dis_num}--\eqref{def:ai}
is non-negative: for all $J\in \ZZ^d$ and $n\in \NN$,
$\rho_{J}^n \geq 0$.
\end{lemma}
\begin{proof}

We can rewrite equation \eqref{dis_num} as
\begin{equation}
\begin{array}{ll}
\ds \rho_{J}^{n+1} = & \ds \rho_{J}^n \left[ 1 - 
\sum_{i=1}^d \frac{\Delta t}{\Delta x_i} \big|{a_i}^n_{J}\big|\right] 
+ \sum_{i=1}^d \rho_{J+e_i}^n \frac{\Delta t}{\Delta x_i}({a_i}^n_{J+e_i})^-
+ \sum_{i=1}^d \rho_{J-e_i}^n \frac{\Delta t}{\Delta x_i}({a_i}^n_{J- e_i})^+.
\label{schemarho}
\end{array}
\end{equation}
From definition \eqref{def:ai}, we have $|{a_i}_{J}^{n}|\leq a_\infty$ for $i=1,\ldots,d$. 
Thus assuming Condition \eqref{CFL}, we deduce that in \eqref{schemarho}
all the coefficients in front of $\rho_{J}^n$, $\rho_{J-e_i}^n$ and $\rho_{J+e_i}^n$,
$i=1,\ldots,d$, are non-negative. By a straightforward induction argument, as $\rho_J^0 \geq 0$ 
for all $J \in \ZZ^d$, $\rho_{J}^{n+1}\geq 0$ for all $J\in \ZZ^d$.
\end{proof}

In the next lemma, we collect some useful properties of the scheme, among which mass conservation
and finiteness of the $p$th order moment: 

\begin{lemma}\label{bounddismom}
Let $a\in L^\infty([0,+\infty),L^\infty(\RR^d))^d$
and let $(\rho_{J}^0)_{J\in \ZZ^d}$ be defined by \eqref{disrho0} for some
$\rho^{ini}\in \calP_p(\RR^d)$, $p\geq 1$.
Let us assume that the CFL condition \eqref{CFL} is satisfied.
Then the sequence $(\rho_{J}^n)_{n \in \NN, J\in \ZZ^d}$ given by the numerical scheme
\eqref{dis_num}--\eqref{def:ai} satisfies:

$(i)$ Conservation of the mass:
for all $n\in \NN^*$, we have
$$
\sum_{J\in \ZZ^d} \rho_{J}^n = \sum_{J\in \ZZ^d} \rho_{J}^0 = 1.
$$

$(ii)$ Bound on the $p$th moment: 
there exists a constant $C_p>0$, only depending on $a_{\infty}$, 
the dimension $d$ and the exponent $p$, such that, for all $n\in \NN$, we have
\[
\begin{array}{l}
\ds M_1^n := \sum_{J\in \ZZ^d} |x_J| \rho_{J}^n \leq M_1^0 + C_1 t^n, \qquad \mbox{ when } p=1,
\vspace{2pt}
\\
\ds M_p^n := \sum_{J\in \ZZ^d} |x_J|^p\rho_{J}^n \leq e^{C_p t^n} \big(M_p^0 + C_p\big), \qquad \mbox{ when } p>1,
\end{array}
\]
where we recall that $t^n=n\Delta t$.

$(iii)$ Support: let us define $\lambda_i = \Delta t / 
\Delta x_{i}$. 
If $\rho^{ini}$ has a bounded support then the numerical approximation at finite time $T$
has a bounded support too.
More precisely, assuming that $\rho^{0}_{\Delta x}$ is compactly supported in $B(0,R)$,
then for any $T \geq 0$, for any integer $n\leq T/\Delta t$, we have 
$$
\rho^n_{J}=0 \quad \mbox{for any } J\in \ZZ^d \mbox{ such that } 
x_J \notin B\Bigl(0,R+\frac{T}{\min_{i=1,\ldots,d} \{\lambda_i\}}\Bigr). 
$$
\end{lemma}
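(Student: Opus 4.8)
The plan is to treat the three statements in turn, the common backbone being the rewriting \eqref{schemarho} of the scheme: under the CFL condition \eqref{CFL}, Lemma \ref{lem:CFL} guarantees that every $\rho_J^n$ is nonnegative and that all the coefficients appearing in \eqref{schemarho} are nonnegative as well. This nonnegativity is what legitimates, via Tonelli's theorem, all the rearrangements of the (a priori infinite) sums over $J\in\ZZ^d$ that follow. For $(i)$, I would simply sum \eqref{dis_num} over $J\in\ZZ^d$. Since the scheme is in conservative (flux) form, the contribution attached to each interface appears twice, with opposite signs, in the balances of the two adjacent cells; regrouping by interface (an operation justified by absolute summability) makes the whole flux sum telescope to $0$, so that $\sum_J \rho_J^{n+1} = \sum_J \rho_J^n$. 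A straightforward induction starting from $\sum_J \rho_J^0 = 1$ then gives the claim.

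For $(ii)$, the key step is to multiply \eqref{schemarho} by $|x_J|^p$, sum over $J$, and shift the summation index in the two neighbour sums (writing $K = J \pm e_i$, so that $x_J = x_K \mp \Delta x_i e_i$). After this reindexation one is led to the identity
\[
M_p^{n+1} = \sum_{J \in \ZZ^d} \rho_J^n \left[ |x_J|^p
+ \sum_{i=1}^d \lambda_i \big({a_i}_J^n\big)^+ \big( |x_J + \Delta x_i e_i|^p - |x_J|^p \big)
+ \sum_{i=1}^d \lambda_i \big({a_i}_J^n\big)^- \big( |x_J - \Delta x_i e_i|^p - |x_J|^p \big) \right].
\]
I would then estimate the finite differences. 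For $p=1$, the triangle inequality gives $\big||x_J\pm\Delta x_i e_i| - |x_J|\big| \leq \Delta x_i$, whence, using $\lambda_i\Delta x_i = \Delta t$, $|{a_i}_J^n| \leq a_\infty$, and $(i)$, the affine recursion $M_1^{n+1} \leq M_1^n + C_1\Delta t$, which integrates to the stated bound. For $p>1$, I would use the first-order estimate $|x+h|^p - |x|^p \leq p|h|(|x|+|h|)^{p-1}$ (obtained by integrating $\frac{d}{dt}|x+th|^p$), together with $\Delta x_i \leq 1$ and elementary convexity bounds reducing $(|x_J|+\Delta x_i)^{p-1}$ to $C_p(|x_J|^p + 1)$, so as to absorb the correction into $C_p\Delta t(|x_J|^p+1)$. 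Summing against $\rho_J^n$ and invoking $(i)$ yields $M_p^{n+1} \leq (1+C_p\Delta t)M_p^n + C_p\Delta t$, and a discrete Grönwall argument (using $1+x\leq e^x$ and $\sum_{k<n} e^{C_p k\Delta t}\Delta t \leq (e^{C_p t^n}-1)/C_p$) gives the exponential bound. Finiteness of $M_p^0$, needed to launch the induction, follows from $\rho^{ini}\in\calP_p(\RR^d)$ and $|x_J| \leq |x| + \sqrt{d}\,\Delta x/2$ on $C_J$.

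For $(iii)$, I would invoke finite speed of propagation: by \eqref{dis_num}, $\rho_J^{n+1}$ depends only on $\rho_J^n$ and its nearest neighbours $\rho_{J\pm e_i}^n$, so the support can grow by at most one cell, in the $\ell^1$ graph distance on $\ZZ^d$, at each time step. Hence, if $\rho^0_{\Delta x}$ is supported in $B(0,R)$ and $n \leq T/\Delta t$, any index $J$ with $\rho_J^n \neq 0$ satisfies $\sum_i |J_i - J_i'| \leq n$ for some $x_{J'} \in B(0,R)$. It then remains to turn this combinatorial bound into a Euclidean one: writing $m_i = |J_i - J_i'| \geq 0$ and using $\Delta x_i \leq \Delta x$ together with $\sum_i m_i^2 \leq (\sum_i m_i)^2$, one gets $|x_J - x_{J'}| \leq n\Delta x \leq (T/\Delta t)\Delta x = T/\min_i \lambda_i$, and the triangle inequality $|x_J| \leq |x_{J'}| + |x_J - x_{J'}|$ closes the argument.

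The arithmetic of $(i)$ and $(iii)$ is essentially routine, resting respectively on the telescoping of conservative fluxes and on nearest-neighbour propagation. I expect the genuine obstacle to be the moment bound $(ii)$ for $p>1$: one must carry out the index shift correctly and, above all, choose the finite-difference estimate so that the leading power $|x_J|^p$ reproduces on the right-hand side with a constant $C_p$ depending only on $a_\infty$, $d$ and $p$, which is precisely what makes the Grönwall step close and the bound uniform in the mesh.
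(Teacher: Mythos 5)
Your proof is correct and follows essentially the same route as the paper's: summation and telescoping of the conservative fluxes for $(i)$, a discrete integration by parts (index shift) followed by a finite-difference estimate on $|x|^p$ and a discrete Gr\"onwall argument for $(ii)$, and finite speed of propagation for $(iii)$. The only (cosmetic) differences are that you bound $|x+h|^p-|x|^p$ by $p|h|(|x|+|h|)^{p-1}$ and use the pointwise inequality $|x_J|^{p-1}\leq |x_J|^p+1$, where the paper uses its convexity estimate \eqref{ineqp} and H\"older's inequality on $M_{p-1}^n$; both yield the same affine recursion $M_p^{n+1}\leq (1+C_p\Delta t)M_p^n+C_p\Delta t$.
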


\begin{proof}
According to Lemma \ref{lem:CFL}, the weights $((\rho_{J}^n)_{J \in \ZZ^d})_{n \in \NN}$ are non-negative.

$(i)$ The mass conservation is directly obtained by summing
equation \eqref{dis_num}  over $J$.

$(ii)$ Let $p\geq 1$. By a discrete integration by parts on \eqref{dis_num}, we get
\begin{equation}\label{ineqxp}
\begin{array}{ll}
\ds \sum_{J\in \ZZ^d} |x_J|^p \rho_{J}^{n+1} = & \ds \sum_{J\in \ZZ^d} |x_J|^p\rho_{J}^n - 
 \sum_{i=1}^d\frac{\Delta t}{\Delta x_i} \sum_{J\in \ZZ^d} ({a_i}^n_{J})^+ 
 \, \big(|x_J|^p-|x_{J+e_i}|^p\big) \rho_{J}^n \\[2mm]
& \ds + \sum_{i=1}^d\frac{\Delta t}{\Delta x_i} \sum_{J\in \ZZ^d} 
({a_i}^n_{J})^- \big(|x_{J}|^p-|x_{J-e_i}|^p\big) \rho_{J}^n.
\end{array}
\end{equation}
Letting $c_p = p 2^{p-1}$, one has  
\begin{equation}\label{ineqp}
\Big| |x|^p - |x\pm \Delta x_i e_i|^p \Big| \leq 
c_p \Big( \Delta x_i |x|^{p-1} + \Delta x_i^{p}\Big).
\end{equation}
Indeed, thanks to the convexity of $x \mapsto | x|^p$, we have
\begin{equation}
\label{eq:convexity:bound:moments}
\begin{cases}
\ds |x|^p \geq | x \pm \Delta x_i e_i |^p \mp p | x \pm \Delta x_i e_i |^{p-2} 
\langle x \pm \Delta x_i e_i, \Delta x_i e_i \rangle, 
\\
\ds | x \pm \Delta x_i e_i |^p \geq |x|^p \pm p|x|^{p-2} \langle x, \Delta x_i e_i \rangle.
\end{cases}
\end{equation}
Above, $\vert x \pm \Delta x_{i} e_{i} \vert^{p-2}
\langle x \pm \Delta x_{i} e_{i},\Delta x_{i} 
e_{i} \rangle$ is understood as 
$0$ when $x \pm \Delta x_{i} e_{i}=0$, 
and similarly for 
$|x|^{p-2} \langle x, \Delta x_i e_i \rangle$.

Now, the first line in 
\eqref{eq:convexity:bound:moments}, yields
$$
| x \pm \Delta x_i e_i |^p - |x|^p \leq p \Delta x_i | x \pm \Delta x_i e_i |^{p-1} \leq 
p 2^{p-1}\left( \Delta x_i |x|^{p-1} + \Delta x_i^p\right) 
$$
 (actually true with $2^{p-2}$ instead of $2^{p-1}$), whilst the second line gives
$$
| x \pm \Delta x_i e_i |^p - |x|^p \geq - p |x|^{p-1} \Delta x_i \geq 
- p 2^{p-1}\left( \Delta x_i |x|^{p-1} + \Delta x_i^p\right). 
$$
We easily get 
\eqref{ineqp}.

Then, using inequality \eqref{ineqp}
together with the mass conservation and the fact that $\Delta x \leq 1$, we deduce from \eqref{ineqxp}:
$$
\begin{array}{ll}
\ds \sum_{J\in \ZZ^d} |x_J|^p \rho_{J}^{n+1} & \ds \leq \sum_{J\in \ZZ^d}
|x_J|^p \rho_{J}^{n} + c_p\Delta t \sum_{i=1}^d\sum_{J\in \ZZ^d} 
\Big(|x_J|^{p-1}+ \Delta x_i^{p-1}\Big)\, \rho_{J}^n \, |{a_i}^n_{J}| \\[5mm]
& \ds \leq \sum_{J\in \ZZ^d} |x_J|^p \rho_{J}^{n} + c_p d \Delta t a_\infty 
\biggl(\sum_{J\in \ZZ^d} |x_J|^{p-1}\, \rho_{J}^n + 
{1}
\biggr),
\end{array}
$$
which may be rewritten as
\begin{equation*}
M^{n+1}_{p} \leq M_{p}^n + c_{p} d \Delta t 
a_{\infty}
\bigl( M^{n}_{p-1} + 1 \bigr),
\end{equation*}
where, by mass conservation, $M^n_{0} = 1$.
For $p = 1$, 
we get the result
with $C_1 = 2c_p d a_\infty = 2d a_\infty$ by a
 straightforward induction. For $p > 1$, 
thanks to H\"older's inequality, $M_{p-1}^n \leq (M_p^n)^{(p-1)/p} 
\leq (M_p^n + 1)^{(p-1)/p} \leq M_p^n + 1$ and, we get 
\begin{equation*}
\begin{split}
M_p^{n+1} 
&\leq  M^n_{p} + c_{p} d \Delta t a_{\infty} \bigl( M^n_{p} + 2
\bigr)
\\
&\leq (1 + C_p\Delta t) M_p^n + C_p\Delta t
\end{split}
\end{equation*}
with $C_p = 2 c_p d a_\infty$. 
We conclude the proof using a discrete Gr\"onwall lemma.

$(iii)$ By definition of the numerical scheme \eqref{dis_num}, we clearly have that 
the support increases of only one cell in each direction at each time step. 
Therefore, after $n$ {iterations}, the support has increased of less than
$$
n \times \max_{i=1,\ldots,d} \{\Delta x_i\} = \frac{n\, \Delta t}{\min_{i=1\ldots,d}\{\lambda_i\}} 
\leq \frac{T}{\min_{i=1,\ldots,d}\{\lambda_i\}},
$$
provided that $T \geq n \Delta t$. 
\end{proof}

\subsection{Probabilistic interpretation}\label{sec:proba}

Following the idea in \cite{caniveau}, we associate 
random characteristics with the here above upwind scheme. The construction of these 
characteristics is based upon the trajectories of a Markov chain admitting $\ZZ^d$ as state space. 
Here
the random characteristics will be {\em forward} characteristics 
whilst they are {\em backward} 
in \cite {caniveau}.
The rationale for considering forward characteristics 
lies in the fact that the equation 
is conservative; subsequently, the expression of the solution
provided by Theorem \ref{thPR}
is based upon a forward flow. On the opposite, 
the equation
considered in \cite{caniveau}
is of the non-conservative form 
 $\partial_t \rho + a \cdot \nabla \rho = 0$
 and the expression of the solution involves  backward characteristics.

Throughout the analysis, we will denote by $\Omega= (\ZZ^d)^{\mathbb N}$ the canonical 
space for the Markov chain.
The canonical process is denoted by $(K^{n})_{n \in {\mathbb N}}$ (namely $K^n$ maps 
$\omega = (\omega^n)_{n \in \NN} \in \Omega$ onto the $n^{\textrm{th}}$ 
coordinate $\omega^n$ of $\omega$): 
$K^n$ must be understood as the $n^{\textrm{th}}$ site occupied by a random 
process taking values in $\ZZ^d$.
Notice that we here adopt a non-standard notation for the time index as we put it in superscript 
instead of subscript; although it does not fit
the common habit, we feel it more consistent with the notation used
above for defining the numerical scheme. 

We equip $\Omega$ with the standard Kolmogorov $\sigma$-field 
${\mathcal A}$ generated by sets of the form $\prod_{n \in {\mathbb N}} A^n$, 
with $A^n \subset \ZZ^d$ for all $n \in {\mathbb N}$
and, for some integer $n_{0} \geq {0}$, $A^n=\ZZ^d$ for $n \geq n_{0}$. In 
other words, ${\mathcal A}$ is the 
smallest $\sigma$-field that renders each $K^{n}$, $n \in {\mathbb N}$, 
measurable. Indeed, 
for any integer $n_{0} \geq 0$ and any subsets $A^0,\dots,A^{n_{0}}
\subset \ZZ^d$, the pre-image 
$(K^{0},\dots,K^{n_{0}})^{-1}(A^0 \times \dots  \times A^{n_{0}})$
is precisely the cylinder $A^0 \times \dots \times A^{n_{0}} \times 
\prod_{n > n_{0}} \ZZ^d$. 
The canonical filtration generated by $(K^n)_{n \in {\mathbb N}}$
is denoted by ${\mathbb F}=({\mathcal F}^{n}= 
\sigma(K^{0},\cdots,K^{n}))_{n \in {\mathbb N}}$.
For each $n \geq 0$, 
${\mathcal F}^n$ is the sub-$\sigma$-field of ${\mathcal A}$
containing events of the form $A^{(n)} \times \prod_{k>n}
\ZZ^d$, with $A^{(n)} \subset (\ZZ^d)^{n+1}$. Informally, ${\mathcal F}^{n}$ stands 
for the information that an observer would collect by observing the random 
characteristic up until time $n$ (or equivalently the realizations of $K^{0},\dots,K^{n}$). 

We then endow the pair $(\Omega,{\mathcal A})$ with a collection of probability 
measures $({\mathbb P}_{\mu})_{\mu \in {\mathcal P}(\ZZ^d)}$, ${\mathcal P}(\ZZ^d)$ denoting the 
set of probability measures on $\ZZ^d$,
such that, for all $\mu \in {\mathcal P}(\ZZ^d)$, $(K^{n})_{n \in {\mathbb N} }$ is a 
time-inhomogeneous Markov chain under 
${\mathbb P}_{\mu}$ 
with initial law $\mu$, namely $K^{0}{}_{\#}{\mathbb P}_{\mu} = \mu$ (\textit{i.e.}
${\mathbb P}_{\mu}(K_{0}=J) = \mu(J)$, 
which means that the initial starting cell is picked 
at random according to the law $\mu$; sometimes, we will also write $K^{0}\sim \mu$),
and with transition matrix at time $n \geq 0$:
\begin{equation}
\label{defKj}
P_{J,L}^n =
\left\{\begin{array}{ll}
\ds 1-\sum_{i=1}^d \frac{\Delta t}{\Delta x_i} 
\big\vert {a_i}^n_{J}
\big\vert 
&\textrm{when} \ L=J,
\vspace{5pt}
\\
\ds \frac{\Delta t}{\Delta x_i}({a_i}^n_{J})^+ 
&\textrm{when} \ L=
J + e_i, \ \ \textrm{for} \ i= 1,\dots,d,
\vspace{5pt}
\\ 
\ds \frac{\Delta t}{\Delta x_i}({a_i}^n_{J})^- 
&\textrm{when} \ L=
J - e_i, \ \ \textrm{for} \ i=1,\dots,d,
\vspace{5pt}
\\
0 \quad &\textrm{otherwise},
\end{array}\right.
\end{equation}
where we used 
\eqref{def:ai}
under the assumption that 
$a\in L^\infty([0,+\infty),L^\infty(\RR^d))^d$
satisfies the CFL condition 
\eqref{CFL} (we assume it to be in force throughout the 
section). 
\vspace{5pt}

For any $\mu \in {\mathcal P}(\ZZ^d)$,
we write $\EE_{\mu}$ for the expectation under $\PP_{\mu}$. 
Also, for any $\mu \in {\mathcal P}(\ZZ^d)$
and any $n \in {\mathbb N}$, the conditional probability $\PP_{\mu}( \cdot \vert {\mathcal F}^{n})$ 
is just denoted by $\PP_{\mu}^n( \cdot)$; similarly, 
the conditional expectation $\EE_{\mu}(\cdot \vert {\mathcal F}^{n})$ is denoted by $\EE_{\mu}^n(\cdot)$.
Moreover, in statements that are true independently of the initial 
distribution $\mu \in {\mathcal P}(\ZZ^d)$, we often 
drop the index $\mu$ in the symbols $\PP_{\mu}$ and/or 
$\EE_{\mu}$. For instance, we may write:
\begin{equation*}
\forall n \in {\mathbb N},
\ \forall L \in {\mathbb Z}^d, \quad 
\PP^{n} \bigl(K^{n+1}= L \bigr) = P^n_{K^{n},L}.
\end{equation*}

Whenever $\mu$ is the Dirac mass at some $J \in \ZZ^d$, namely 
$\mu = \delta_{J}$, we write $\PP_{J}$ instead of $\PP_{\mu}$
and similarly for $\EE$.
Notice that, for any $\mu \in {\mathcal P}(\ZZ^d)$, 
$\PP_{\mu}$ is entirely determined by $\mu$ and
the collection $(\PP_{J})_{J \in {\mathbb Z}^d}$:
\begin{equation*}
\PP_{\mu}(\cdot) = \sum_{J \in {\mathbb Z}^d} \mu_{J}
\PP_{J}(\cdot).
\end{equation*}
In most cases, it thus suffices to restrict the analysis of the Markov chain to the cases when 
$\mu = \delta_{J}$, for $J \in \ZZ^d$.
\vspace{5pt}

The following lemma gives the connection between the sequence of 
weights $((\rho^n)_{J \in \ZZ^d})_{n \in {\mathbb N}}$
introduced in the previous section (defined by the upwind scheme) 
and the Markov chain with transition matrix $P$:

\begin{lemma}
\label{lem:transport:rho}
Given an initial distribution 
$\rho^0 = (\rho^0_{J})_{J \in \ZZ^d}\in {\mathcal P}(\ZZ^d)$, 
define, for any $n \in {\mathbb N}$, 
$\rho^n = (\rho^n_{J})_{J \in \ZZ^d}$ through 
the scheme 
\eqref{dis_num}, namely
\begin{equation*}
\begin{split}
\ds \rho_{J}^{n+1} &= \rho_{J}^n - \sum_{i=1}^{d} \frac{\Delta t}{\Delta x_i}
\Big(({a_i}^n_{J})^+ \rho_{J}^n - ({a_i}^n_{J+ e_i})^- \rho_{J+e_i}^n 
-({a_i}^n_{J- e_i})^+ \rho_{J-e_i}^n + ({a_i}^n_{J})^- \rho_{J}^n \Big)
\\
&= \rho^n_{J} P_{J,J}^n + \sum_{i=1}^d 
\bigl( \rho^n_{J-e_{i}} P_{J-e_{i},J}^n
+ \rho^n_{J+e_{i}} 
P_{J+e_{i},J}^n \bigr). 
\end{split}
\end{equation*}
Then, for any $n \in {\mathbb N}$,
one has 
$\rho^n = K^n{}_{\#} \PP_{\rho^0}$
(equivalently $\rho^n$ is the law of $K^{n}$ 
when the chain is initialized with $\rho^0$). 
\end{lemma}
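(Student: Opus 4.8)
The plan is to prove the identity $\rho^n = K^n{}_{\#}\PP_{\rho^0}$ by induction on $n$, by recognizing that the recursion defining the upwind scheme is, term by term, the forward (Chapman--Kolmogorov) evolution of the one-time marginals of the Markov chain $(K^n)_{n\in\NN}$ with transition matrices $(P^n)_n$ given in \eqref{defKj}. In other words, I would show that \eqref{dis_num} is exactly the adjoint action of $P^n$ on probability measures.

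First I would set $\mu^n := K^n{}_{\#}\PP_{\rho^0}$, that is $\mu^n_J = \PP_{\rho^0}(K^n=J)$, and treat the base case $n=0$: by the very construction of $\PP_{\rho^0}$ one has $K^0 \sim \rho^0$, hence $\mu^0 = \rho^0$. For the induction step, assuming $\mu^n = \rho^n$, I would compute $\mu^{n+1}_J$ by conditioning on ${\mathcal F}^n$ and invoking the Markov property $\PP^n(K^{n+1}=J) = P^n_{K^n,J}$. Taking expectations under $\PP_{\rho^0}$ gives
\[
\mu^{n+1}_J = \EE_{\rho^0}\bigl[\PP^n(K^{n+1}=J)\bigr] = \sum_{L\in\ZZ^d} \PP_{\rho^0}(K^n=L)\,P^n_{L,J} = \sum_{L\in\ZZ^d} \mu^n_L\,P^n_{L,J}.
\]

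The key bookkeeping step is then to match this sum with the scheme. Since the transition matrix \eqref{defKj} only permits nearest-neighbour jumps, the only nonzero contributions come from $L=J$ and $L=J\mp e_i$. Reading \eqref{defKj} off with shifted indices, the incoming transitions into $J$ are $P^n_{J-e_i,J} = \frac{\Delta t}{\Delta x_i}({a_i}^n_{J-e_i})^+$ (a $+e_i$ move out of $J-e_i$), $P^n_{J+e_i,J} = \frac{\Delta t}{\Delta x_i}({a_i}^n_{J+e_i})^-$ (a $-e_i$ move out of $J+e_i$), and $P^n_{J,J} = 1 - \sum_{i=1}^d \frac{\Delta t}{\Delta x_i}\vert {a_i}^n_{J}\vert$. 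Substituting these into the sum and using $\mu^n=\rho^n$ produces precisely the factorized form of the recursion displayed in the statement of the lemma, namely $\rho^{n+1}_J = \rho^n_J P^n_{J,J} + \sum_i(\rho^n_{J-e_i}P^n_{J-e_i,J} + \rho^n_{J+e_i}P^n_{J+e_i,J})$, which coincides with \eqref{dis_num}. Hence $\mu^{n+1}=\rho^{n+1}$, closing the induction.

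There is no genuine analytical difficulty here; the statement is essentially a reformulation of the scheme as a forward Kolmogorov equation. The only points I would be careful about are, first, that the CFL condition \eqref{CFL} guarantees $P^n$ is a bona fide stochastic matrix (nonnegative entries, each row summing to $1$), so that $\PP_{\rho^0}$ does define a Markov chain and each $\mu^n$ is a genuine probability measure on $\ZZ^d$; and second, that the index shifts are carried out consistently, so that the \emph{incoming} fluxes into cell $J$ are correctly identified with the \emph{outgoing} transitions from its neighbours $J\mp e_i$. The essential observation to emphasize is simply that the upwind update \eqref{dis_num} propagates the law of the chain forward in time.
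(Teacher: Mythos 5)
Your proof is correct and follows essentially the same route as the paper: both arguments amount to observing that the upwind update is the forward Kolmogorov (Chapman--Kolmogorov) evolution $\rho^{n+1}_J=\sum_L \rho^n_L P^n_{L,J}$ of the chain's one-time marginals, with the sum restricted to the nearest-neighbour transitions of \eqref{defKj}, and then concluding by induction from $K^0\sim\rho^0$. The index bookkeeping for the incoming transitions $P^n_{J\mp e_i,J}$ matches the paper's identification exactly.
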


\begin{proof}
For any $\mu \in {\mathcal P}(\ZZ^d)$, 
we have 
\begin{equation*}
\begin{split}
K^{n+1}{}_{\#} \PP_{\mu}
= \sum_{L \in \ZZ^d} \delta_{L}
\PP_{\mu} \bigl( K^{n+1} = L \bigr)
= \sum_{L \in \ZZ^d}
\sum_{J \in \ZZ^d} 
\delta_{L}
\PP_{\mu} \bigl( K^{n} = J \bigr) 
P^n_{J,L}.
\end{split}
\end{equation*}
Therefore, 
\begin{equation*}
\begin{split}
K^{n+1}{}_{\#} \PP_{\mu} &= \sum_{L \in \ZZ^d}
\delta_{{L}} \Bigl( \PP_{\mu} \bigl( K^{n} = L \bigr) 
P^n_{L,L}
\\
&\hspace{15pt} +
\sum_{i=1}^d \bigl(
\PP_{\mu} \bigl( K^{n} = L - e_{i}\bigr) 
P_{L-e_{i},L}^n + \PP_{\mu} \bigl( K^{n} = L + e_{i}\bigr) 
P_{L+e_{i},L}^n \bigr) \Bigr).
\end{split}
\end{equation*}
Choosing $\mu = \rho^0$, 
the result follows from a straightforward induction. 
\end{proof}

\vspace{5pt}

Now that we have associated a Markov chain with the weights involved 
in the definition of the upwind scheme, 
we can define, as announced, the
corresponding random characteristics. 
A random characteristic consists of a sequence of random variables $(X^n)_{n \in {\mathbb N}}$ 
from $(\Omega,{\mathcal A})$ into $\RR^d$:
\begin{equation}
\label{defXj}
\forall n \in {\mathbb N},
\
\forall \omega \in \Omega,
\quad 
X^n(\omega) = x_{K^n(\omega)},
\end{equation}
where we recall that 
$x_J = (J_1\Delta x_1, \ldots, J_d \Delta x_d)$
whenever 
$J = (J_{1},\dots,J_{d}) \in {\mathbb Z}^d$. 

\begin{proposition}\label{propZ}
Let $X^n$ be the random variable defined by \eqref{defXj}
through 
the Markov chain admitting 
$P$ in \eqref{defKj} as transition matrix.

(i) For all $J\in \ZZ^d$, we have, with probability one under
$\PP_{J}$,
\begin{equation}\label{en}
\E^n_{J}(X^{n+1}-X^n) = \int_{t^n}^{t^{n+1}} 
a(s,X^n)
\,ds.
\end{equation}

(ii) Defining $\ds \rho^n_{\Delta x} = \sum_{J\in \ZZ^d} \rho_{J}^n \delta_{x_J}$,
we have $\rho^{n}_{\Delta x} = X^n{}_\# \PP_{\rho_{\Delta x}^0}$.

\end{proposition}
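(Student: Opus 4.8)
The plan is to establish both claims by direct computation, using the transition matrix \eqref{defKj} for (i) and Lemma \ref{lem:transport:rho} together with the composition rule for pushforwards for (ii).

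For (i), I would start from $X^n = x_{K^n}$ and condition on ${\mathcal F}^n$. Under $\PP_J$, the Markov property lets me reduce $\E^n_J$ applied to a function of $(K^n,K^{n+1})$ to a finite sum over the admissible transitions, weighted by the row $P^n_{K^n,\cdot}$. Since the only nonzero entries of that row correspond to $L=K^n$ and $L=K^n\pm e_i$, the increment $X^{n+1}-X^n=x_{K^{n+1}}-x_{K^n}$ takes only the values $0$ and $\pm\Delta x_i e_i$, whence
\[
\E^n_J(X^{n+1}-X^n)=\sum_{i=1}^d\left[(\Delta x_i e_i)\frac{\Delta t}{\Delta x_i}({a_i}^n_{K^n})^+ +(-\Delta x_i e_i)\frac{\Delta t}{\Delta x_i}({a_i}^n_{K^n})^-\right].
\]
The factors $\Delta x_i$ cancel, and the identity $({a_i}^n_{K^n})^+-({a_i}^n_{K^n})^-={a_i}^n_{K^n}$ collapses the bracket to $\Delta t\,{a_i}^n_{K^n}\,e_i$. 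Summing over $i$ and invoking the definition \eqref{def:ai} of the numerical velocity, together with $X^n=x_{K^n}$, I would rewrite $\Delta t\,{a_i}^n_{K^n}=\int_{t^n}^{t^{n+1}}a_i(s,X^n)\,ds$, which gives \eqref{en}.

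For (ii), I would observe that $X^n$ factors as $X^n=\Phi\circ K^n$, where $\Phi:\ZZ^d\to\RR^d$ is the deterministic embedding $J\mapsto x_J$. The composition rule $(\Phi\circ K^n)_\#=\Phi_\#\circ K^n{}_\#$ then yields $X^n{}_\#\PP_{\rho^0}=\Phi_\#(K^n{}_\#\PP_{\rho^0})$. By Lemma \ref{lem:transport:rho}, $K^n{}_\#\PP_{\rho^0}=\rho^n$ as a measure on $\ZZ^d$, and pushing forward by $\Phi$ relocates each atom of mass $\rho^n_J$ from $J$ to $x_J$, so that $\Phi_\#\rho^n=\sum_{J\in\ZZ^d}\rho^n_J\,\delta_{x_J}=\rho^n_{\Delta x}$. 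Identifying $\PP_{\rho^0}$ with $\PP_{\rho^0_{\Delta x}}$ through the correspondence $\rho^0\leftrightarrow\rho^0_{\Delta x}$ completes the argument.

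Neither part presents a genuine obstacle; both amount to careful bookkeeping. The only delicate point is the conditional-expectation step in (i): one must correctly use the Markov property to reduce $\E^n_J$ to a finite sum against the single row $P^n_{K^n,\cdot}$ (rather than an integral over all of $\Omega$), and then keep track of the positive and negative parts so that they recombine into the signed velocity ${a_i}^n_{K^n}$. Everything else is a direct substitution.
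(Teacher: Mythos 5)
Your proposal is correct and follows essentially the same route as the paper: part (i) is the same direct computation of the conditional expectation against the row $P^n_{K^n,\cdot}$ followed by the cancellation $({a_i}^n_{K^n})^+-({a_i}^n_{K^n})^-={a_i}^n_{K^n}$ and the definition \eqref{def:ai}, and part (ii) is the same reduction to Lemma \ref{lem:transport:rho}, merely phrased via the pushforward composition rule $X^n{}_\#\PP_\mu=\Phi_\#(K^n{}_\#\PP_\mu)$ instead of writing out the sum $\sum_L \delta_{x_L}\PP_\mu(K^n=L)$ explicitly.
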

\begin{proof}
(i) 
For $J \in \ZZ^d$, 
we compute the conditional expectation given the trajectory of the Markov chain up until time $n$:
$$
\E^n_{J}(X^{n+1}-X^n) = \sum_{i=1}^d \left(\Delta x_i e_i
({a_i}^n_{K^n})^+\frac{\Delta t}{\Delta x_i}
- \Delta x_i e_i ({a_i}^n_{K^n})^- \frac{\Delta t}{\Delta x_i}\right).
$$
We deduce \eqref{en} by using definition \eqref{def:ai}.

(ii)
For any $\mu \in {\mathcal P}(\ZZ^d)$, 
we have, for all $n \in {\mathbb N}$, 
\begin{equation*}
\begin{split}
X^{n}{}_{\#} \PP_{\mu}
&= \sum_{L \in \ZZ^d} 
\delta_{x_{L}}
\PP_{\mu} \bigl( K^{n} = L \bigr)
= \sum_{L \in \ZZ^d} 
\delta_{x_{L}}
\bigl( K^{n}{}_{\#}
\PP_{\mu} 
\bigr)_{L}.
\end{split}
\end{equation*}
The claim follows from Lemma 
\ref{lem:transport:rho}.
\end{proof}

\section{Order of convergence}
\label{sec:ordre}

This section is devoted to the proof of the main result of our paper, that is 
the $1/2$ order of convergence of the numerical approximation constructed by
the upwind scheme \eqref{dis_num}.
The precise statement of the result is:
\begin{theorem}\label{TH}
Let $\rho^{ini} \in \calP_p(\RR^d)$, for 
some $p\geq 1$.
Let us assume that $a\in L^\infty([0,\infty),L^\infty(\RR^d))^d$ and satisfies the 
one-sided Lipschitz continuity condition \eqref{eq:OSL}.
Let $\rho=Z_\# \rho^{ini}$ be the unique measure solution in the sense of Poupaud and Rascle to the 
conservative transport equation \eqref{eq:transp} with initial datum $\rho^{ini}$
given by Theorem \ref{thPR}.
Let us define
$$
\rho_{\Delta x}^n = \sum_{J\in \ZZ^d} \rho_J^n \delta_{x_J},
$$
where the approximation sequence $((\rho_J^n)_{J \in \ZZ^d})_{n \in \NN}$ is 
computed thanks to the scheme 
\eqref{disrho0}--\eqref{dis_num}--\eqref{def:ai}.
We assume that the CFL condition \eqref{CFL} holds. 
Then, there exists a non-negative constant $C$, depending upon $p$, $\rho^{ini}$ and 
$a_\infty$ only, such that, for all $n\in \NN^*$, 
$$
W_p\bigl(\rho(t^n),\rho_{\Delta x}^n\bigr) \leq C\,e^{2 \int_0^{t^n} \alpha(s)\,ds}\,
\bigl( \sqrt{t^{n} \Delta x} + \Delta x \bigr). $$
\end{theorem}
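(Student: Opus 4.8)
The plan is to prove the estimate by coupling the random characteristic $(X^n)_n$ of Proposition \ref{propZ} with the Filippov flow $Z$, so that both measures appear as pushforwards and the Wasserstein bound \eqref{eqWasser:omega} applies. Writing $\PP=\PP_{\rho^0}$ for the chain started from $\rho^0=(\rho^{ini}(C_J))_J$, one has $\rho^n_{\Delta x}=X^n{}_\#\PP$ by Proposition \ref{propZ}(ii), while $\rho(t^n)=Z(t^n)_\#\rho^{ini}$ by Theorem \ref{thPR}. I would insert the intermediate measure $Z(t^n)_\#\rho^0_{\Delta x}=[Z(t^n,X^0)]_\#\PP$, with $X^0=x_{K^0}$, and use the triangle inequality. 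The first piece, $W_p(Z(t^n)_\#\rho^{ini},Z(t^n)_\#\rho^0_{\Delta x})$, is the transported initial quantization error: pushing forward the obvious coupling that sends $x\in C_J$ to the cell centre $x_J$ and using the Lipschitz bound $L_Z(t^n,0)\le e^{\int_0^{t^n}\alpha}$ of Lemma \ref{lem:JacobZ}, it is at most $C\,\Delta x\,e^{\int_0^{t^n}\alpha}$. Everything then reduces to the dynamics error $W_p(Z(t^n)_\#\rho^0_{\Delta x},\rho^n_{\Delta x})\le(\EE|e^n|^p)^{1/p}$, where $e^n:=X^n-Z^n$ and $Z^n:=Z(t^n,X^0)$, by \eqref{eqWasser:omega}.

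The core is therefore a moment estimate on $e^n$, obtained by a discrete martingale computation; I would treat $p=2$ first. Since $Z^n$ is $\mathcal{F}^0$-measurable and, by \eqref{en}, $\EE^n(X^{n+1}-X^n)=\int_{t^n}^{t^{n+1}}a(s,X^n)\,ds$, I decompose $X^{n+1}-X^n=\int_{t^n}^{t^{n+1}}a(s,X^n)\,ds+\xi^{n+1}$ with $\EE^n\xi^{n+1}=0$, and use the semigroup identity $Z^{n+1}-Z^n=\int_{t^n}^{t^{n+1}}a(s,\widetilde Z(s))\,ds$ with $\widetilde Z(s)=Z(s;t^n,Z^n)$. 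Expanding $\EE^n|e^{n+1}|^2$, the conditionally centred increment $\xi^{n+1}$ is orthogonal to the $\mathcal{F}^n$-measurable drift, so all cross terms drop and only a drift contribution plus $\EE^n|\xi^{n+1}|^2$ remain. A direct computation from the transition matrix \eqref{defKj} gives $\EE^n|\xi^{n+1}|^2\le\EE^n|X^{n+1}-X^n|^2=\Delta t\sum_i\Delta x_i\,|{a_i}^n_{K^n}|\le C\,\Delta t\,\Delta x$. This is the decisive point: because the fluctuations are martingale increments they accumulate \emph{in variance}, so summation over the $n\simeq t^n/\Delta t$ steps produces $C\,t^n\Delta x$ rather than the divergent $O(t^n\sqrt{\Delta x/\Delta t})$ that a naive step-by-step use of the triangle inequality would yield; its square root is the announced $\sqrt{t^n\Delta x}$.

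The delicate term is the drift $\langle e^n,\int_{t^n}^{t^{n+1}}[a(s,X^n)-a(s,\widetilde Z(s))]\,ds\rangle$, and this is where the main obstacle lies: since $a$ is merely OSL and not Lipschitz, $|a(s,Z^n)-a(s,\widetilde Z(s))|$ cannot be controlled by $|\widetilde Z(s)-Z^n|$, and the crude bound $2a_\infty$ would leave a non-vanishing $O(1)$ error. The resolution I would use is the splitting $e^n=(X^n-\widetilde Z(s))+(\widetilde Z(s)-Z^n)$. Against the first summand the OSL inequality \eqref{eq:OSL} applies with matched arguments, $\langle X^n-\widetilde Z(s),a(s,X^n)-a(s,\widetilde Z(s))\rangle\le\alpha(s)|X^n-\widetilde Z(s)|^2\le 2\alpha(s)(|e^n|^2+a_\infty^2\Delta t^2)$; against the second summand one pairs only the \emph{small} displacement $|\widetilde Z(s)-Z^n|\le a_\infty\Delta t$ with the bounded quantity $|a(s,X^n)-a(s,\widetilde Z(s))|\le 2a_\infty$, which costs merely $O(\Delta t^2)$ per step, hence $O(\Delta t)$ after summation. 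This is precisely where one exploits that the scheme's frozen velocity is structurally close to $a$ along the flow, rather than invoking the lossy stability estimate \eqref{eq:bianchini:gloyer}.

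Assembling these bounds yields a recursion $\EE|e^{n+1}|^2\le\bigl(1+4\int_{t^n}^{t^{n+1}}\alpha\,ds\bigr)\EE|e^n|^2+C(\Delta t\,\Delta x+\Delta t^2)$ with $e^0=0$ (because $Z(0,X^0)=X^0$); a discrete Gr\"onwall lemma together with the CFL bound $\Delta t\le C\Delta x$ from \eqref{CFL} gives $\EE|e^n|^2\le C\,e^{4\int_0^{t^n}\alpha}\,t^n\Delta x$, the factor $4$ (from the squaring and from $|X^n-\widetilde Z(s)|^2\le 2|e^n|^2+\cdots$) halving to the exponent $2$ of \eqref{eq:final:estimate} upon taking the square root. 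Combining with the initial error through the triangle inequality produces \eqref{eq:final:estimate} for $p=2$, and for $1\le p\le 2$ the monotonicity $W_p\le W_2$ concludes at once. For $p>2$ I would run the same coupling but expand $|e^{n+1}|^p$ instead: the martingale term again annihilates the first-order cross term, the OSL splitting is unchanged, and the only new ingredient is the higher moment $\EE^n|\xi^{n+1}|^p\le(C\Delta x)^{p-2}\EE^n|\xi^{n+1}|^2\le C\,\Delta x^{p-1}\Delta t$, which summed and compared against $(t^n\Delta x)^{p/2}$ is of the correct lower order for small $\Delta x$. I expect the bookkeeping of these higher-order terms, together with pinning down the exact constants, to be the most technical—though not conceptually hard—part of the argument.
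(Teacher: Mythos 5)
Your proposal is correct in its essentials, but it takes a structurally different route from the paper. The paper uses a \emph{three}-term triangle inequality with two intermediate measures: it inserts both $Z(t^n)_\#\rho^0_{\Delta x}$ (to isolate the initial quantization error, exactly as you do) \emph{and} the pushforward by a deterministic explicit Euler scheme $\hat Y^n$ started from the grid points. The comparison is then modularized into two separate lemmas: Lemma \ref{lem:ZY} (Euler vs.\ Filippov flow, using the OSL splitting $Y^n-Z(s)=(Y(s)-Z(s))+(Y^n-Y(s))$ with the crude $a_\infty$ bound on the small displacement) and Lemma \ref{lem:estim1} (Markov chain vs.\ Euler, using the conditionally centred increment $h^n$ with $\EE^n_\mu(|h^n|^p)\le C\Delta t\,\Delta x^{p-1}$). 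You instead couple the chain \emph{directly} to the Filippov flow via $e^n=X^n-Z(t^n,X^0)$ and run a single recursion in which both error sources — the martingale fluctuation accumulating in variance as $t^n\Delta x$, and the OSL drift handled by splitting $e^n=(X^n-Z(s,X^0))+(Z(s,X^0)-Z(t^n,X^0))$ — appear simultaneously. The two arguments rest on identical ideas (conditional centring of the jump, OSL applied with matched arguments, the $O(a_\infty\Delta t)$ intra-step displacement paired with the bounded velocity difference), and your constants and exponents come out the same; your version is arguably more streamlined, while the paper's modular split has the advantage that the two lemmas are reused verbatim in the appendix for the semi-Lagrangian and Rusanov-type schemes. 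One caveat: your treatment of $p>2$ is only sketched; the paper devotes a nontrivial induction on $p$ (expanding $|{\cdot}|^{2p}$, controlling all binomial cross terms with $\EE^n(|h^n|^k)\le C\Delta t\,\Delta x^{k-1}$, and a Young inequality to absorb $t^n\Delta x^{2p-1}$ into $(t^n\Delta x)^p+\Delta x^{2p}$) to make this rigorous, so that step is where real work remains in your write-up, though you have correctly identified every ingredient it requires.
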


The proof of this theorem is postponed to Section \ref{sec:proof}. 
We need first 
to establish some useful estimates 
on the distance between the Filippov characteristics 
generated by a one sided Lipschitz continuous velocity field and the approximated characteristics.

\subsection{Approximation of the flow}

In the following lemma, we provide an estimate for the distance between the exact Filippov flow and
an approximating flow computed through an explicit Euler discretization:

\begin{lemma}\label{lem:ZY}
Let $a\in L^\infty([0,\infty),L^\infty(\RR^d))$ satisfying Condition \eqref{eq:OSL}. 
Let us consider $Z$ the Filippov flow associated to the velocity field $a$ and
define by induction
$$
Y^{n+1} = Y^n + \int_{t^n}^{t^{n+1}} a(s,Y^n)\,ds,
$$
with the initial condition $Y^0=Z(0)$.
There exists a 
universal
constant $C$ such that, for all $n\in \NN$,
$$
|Y^{n} - Z(t^n)| \leq C
a_{\infty}
e^{2\int_0^{t^n} \alpha(s)\,ds} \sqrt{t^n \Delta t}.
$$
\end{lemma}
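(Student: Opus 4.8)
**The plan is to estimate the error via a Grönwall-type recursion on $e^n := |Y^n - Z(t^n)|$, exploiting the OSL condition at the continuous level and treating the Euler discretization error as a source term.**

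First I would write the exact flow increment against the Euler increment. By the semigroup property in the theorem cited from \cite{PoupaudRascle}, we have
\[
Z(t^{n+1}) = Z(t^n) + \int_{t^n}^{t^{n+1}} a\bigl(s, Z(s)\bigr)\, ds,
\]
whereas $Y^{n+1} = Y^n + \int_{t^n}^{t^{n+1}} a(s, Y^n)\, ds$. Subtracting and inserting the intermediate term $a(s, Z(t^n))$ splits the increment of the error into three contributions: a \emph{stability} term $\langle a(s,Y^n) - a(s, Z(t^n)), \cdot\rangle$ to which the OSL condition \eqref{eq:OSL} applies, and a \emph{consistency} term $a(s, Z(t^n)) - a(s, Z(s))$ measuring how far the exact trajectory drifts on one time step. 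The natural strategy is to work with $|e^n|^2$ rather than $|e^n|$, because the OSL inequality is quadratic: expanding $|e^{n+1}|^2 = |e^n + \int_{t^n}^{t^{n+1}}(a(s,Y^n)-a(s,Z(s)))\,ds|^2$ and using that $a$ is bounded by $a_\infty$, the OSL term yields a factor like $(1 + 2\alpha_n \Delta t)$ on $|e^n|^2$.

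The consistency term is where the $\sqrt{t^n \Delta t}$ rate originates. Since $a$ is only bounded (not Lipschitz in time), I would bound $|Z(s) - Z(t^n)| \le a_\infty (s - t^n) \le a_\infty \Delta t$ directly from the semigroup integral representation, so that the drift of the exact flow across one step is $O(a_\infty \Delta t)$. Feeding this into the expansion of $|e^{n+1}|^2$ produces a per-step source term of order $a_\infty^2 \Delta t^2$ (from the cross term with the OSL part, after Young's inequality, and from the square of the consistency increment). Summing these $n \sim t^n/\Delta t$ contributions gives $\sum a_\infty^2 \Delta t^2 \sim a_\infty^2\, t^n \Delta t$, and taking the square root yields exactly the announced $a_\infty \sqrt{t^n \Delta t}$ scaling. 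The exponential amplification $e^{2\int_0^{t^n}\alpha}$ comes from iterating the discrete Grönwall factor $\prod_k (1 + 2\alpha_k \Delta t) \le \exp(2\sum_k \alpha_k \Delta t) \approx \exp(2\int_0^{t^n}\alpha(s)\,ds)$, using $1+u \le e^u$; here one uses that $\alpha_k \Delta t = \int_{t^k}^{t^{k+1}}\alpha$ so the Riemann sum is in fact exact.

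\textbf{The main obstacle} I anticipate is handling the OSL term cleanly when working with squared distances across a full time step rather than infinitesimally. Because $a(s, Y^n)$ is evaluated at the \emph{frozen} point $Y^n$ while the OSL condition compares $a(s,\cdot)$ at two points, I must pair $a(s,Y^n) - a(s,Z(t^n))$ against $Y^n - Z(t^n) = e^n$ — but the quadratic expansion naturally pairs it against the full increment, so I will need to carefully split $\langle a(s,Y^n)-a(s,Z(s)), e^n\rangle$ into the genuine OSL piece $\langle a(s,Y^n)-a(s,Z(t^n)), e^n\rangle \le \alpha(s)|e^n|^2$ plus a cross term $\langle a(s,Z(t^n))-a(s,Z(s)), e^n\rangle$ controlled by $a_\infty \Delta t \cdot |e^n|$ via Cauchy--Schwarz and absorbed with Young's inequality. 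Keeping track of the constants so that the final bound carries a clean universal $C$ and the stated $e^{2\int\alpha}$ factor (rather than $e^{\int\alpha}$) is the delicate bookkeeping step; the factor $2$ arises precisely because we square the distance and the OSL constant appears doubled, consistent with Lemma \ref{lem:JacobZ}.
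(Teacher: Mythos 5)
Your overall architecture (squared error, Gr\"onwall recursion, per-step source of order $a_\infty^2\Delta t^2$) matches the paper's, but the specific decomposition you propose for the cross term breaks down, precisely at the point you yourself flag as the main obstacle. You split the velocity difference as $a(s,Y^n)-a(s,Z(s)) = \bigl(a(s,Y^n)-a(s,Z(t^n))\bigr) + \bigl(a(s,Z(t^n))-a(s,Z(s))\bigr)$ and claim the second (``consistency'') piece is controlled by $a_\infty\Delta t$ because $|Z(s)-Z(t^n)|\le a_\infty\Delta t$. That inference requires $|a(s,x)-a(s,y)|\lesssim |x-y|$, i.e.\ spatial Lipschitz continuity of $a$, which is exactly the hypothesis the lemma dispenses with: the OSL condition \eqref{eq:OSL} is one-sided and gives no pointwise modulus of continuity (take $a(x)=-\mathrm{sign}(x)$, which satisfies \eqref{eq:OSL} with $\alpha=0$ yet has $|a(x)-a(y)|=2$ for $x<0<y$ arbitrarily close). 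The only available bound on your consistency term is $|a(s,Z(t^n))-a(s,Z(s))|\le 2a_\infty$, so the cross term $\langle a(s,Z(t^n))-a(s,Z(s)),e^n\rangle$ is $O(a_\infty|e^n|)$, not $O(a_\infty\Delta t\,|e^n|)$; after Young's inequality and summation over $n\sim t^n/\Delta t$ steps this contributes an $O(1)$ source, and no convergence rate survives.

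The paper's proof avoids this by splitting the \emph{position} rather than the velocity: in the cross term $2\int_{t^n}^{t}\langle Y^n-Z(t^n),\,a(s,Y^n)-a(s,Z(s))\rangle\,ds$ it writes $Y^n-Z(t^n)=(Y^n-Z(s))+(Z(s)-Z(t^n))$. The first piece pairs $a(s,\cdot)$ evaluated at $Y^n$ and $Z(s)$ against the vector $Y^n-Z(s)$ joining those same two points, so \eqref{eq:OSL} applies verbatim and yields $\alpha(s)|Y^n-Z(s)|^2$; the second piece has its \emph{first} factor bounded by $|Z(s)-Z(t^n)|\le a_\infty\Delta t$ and its second factor bounded merely by $2a_\infty$, giving the desired $O(a_\infty^2\Delta t^2)$ per step with no smoothness of $a$ required. (A further small step, which you would also need, converts $|Y^n-Z(s)|^2$ into $|Y(s)-Z(s)|^2$ via Young's inequality so that the continuous Gr\"onwall lemma closes on each interval $[t^n,t^{n+1}]$.) With that substitution your argument goes through and delivers the stated bound.
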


\begin{remark}
The order $1/2$ in this estimate may not be optimal, but it is sufficient for our purpose. 
\end{remark}

\begin{proof}
For a given value of $n \in \NN$,
let us define, for $t \in [t^n, t^{n+1}]$, $Y(t) = Y^n + \int_{t^n}^t a(s,Y^n)\, ds$. 
By definition of the characteristics, we have, for any $t \in [t^n, t^{n+1}]$, 
$$
|Y(t) - Z(t)|^2 = \left|Y^n -Z(t^n) + \int_{t^n}^{t} \bigl(a(s,Y^n)-a(s,Z(s)) \bigr)\,ds\right|^2.
$$
Expanding the right hand side, we get 
\begin{equation}
\label{eq:Y(t)-Z(t)}
\begin{split}
\ds |Y(t) - Z(t)|^2 &\leq |Y^n -Z(t^n)|^2 + 2\int_{t^n}^{t} \big\langle Y^n-Z(s),a(s,Y^n)-a(s,Z(s))\bigr\rangle\,ds 
\\
\hspace{15pt}
& +2 \int_{t^n}^{t} \big\langle Z(s)-Z(t^n),a(s,Y^n)-a(s,Z(s))\bigr\rangle\,ds 
+(2 a_\infty \Delta t)^2.
\end{split}
\end{equation}
Using condition \eqref{eq:OSL}, we deduce
$$
\begin{array}{ll}
\ds |Y(t) - Z(t)|^2 \leq &\ds 
|Y^n -Z(t^n)|^2 + 2\int_{t^n}^{t} \alpha(s) |Y^n-Z(s)|^2\,ds \\[2mm]
& \ds + 2\int_{t^n}^{t} \big\langle Z(s)-Z(t^n),a(s,Y^n)-a(s,Z(s))\bigr\rangle\,ds+
(2 a_\infty \Delta t)^2.
\end{array}
$$
Moreover, since the field $a$ is bounded, we have
$|Z(s)-Z(t^n)|\leq a_\infty |s-t^n|$
and so 
$\int_{t^n}^t |Z(s)-Z(t^n)| ds \leq a_{\infty} (t-t^n)^2/2$.
Thus, 
$$
\ds |Y(t) - Z(t)|^2 \leq  
|Y^n -Z(t^n)|^2 + 2\int_{t^n}^{t} \alpha(s) |Y^n-Z(s)|^2\,ds + 6 a_\infty^2  \Delta t^2, 
$$
and, as we also have 
$\vert Y(s) - Y^n \vert \leq a_{\infty} \vert s - t^n \vert$, 
\begin{equation}
\label{eq:Y(t)-Z(t):2}
\begin{split}
&|Y(t) - Z(t)|^2 
\\
&\hspace{15pt} \leq  
|Y^n -Z(t^n)|^2 + 3 \int_{t^n}^{t} \alpha(s) |Y(s)-Z(s)|^2\,ds + 6 a_\infty^2  \Delta t^2 
+ 6 a_\infty^2 \Delta t^2 \int_{t^n}^t \alpha(s)\, ds,
\end{split}
\end{equation}
where we used the standard Young inequality 
$(|Y(s)-Z(s)| + |Y^n-Y(s)|)^2 \leq (3/2) 
|Y(s)-Z(s)|^2 + 
3 |Y^n-Y(s)|^2$.

Thanks to a continuous Gr\"onwall lemma, the two characteristics thus satisfy 
\[
|Y^{n+1}-Z(t^{n+1})|^2 \leq \biggl[ |Y^n -Z(t^n)|^2 + 6 a_{\infty}^2 \Delta t^2 \biggl(1 
+ \int_{t^n}^{t^{n+1}} \alpha(s)\, ds \biggr) \biggr] e^{3 \int_{t^n}^{t^{n+1}} \alpha(s) \,ds}.
\]
As  $Y^0 = Z(0)$, a discrete Gr\"onwall lemma leads to 
\[
|Y^{n}-Z(t^{n})|^2 \leq  6 a_{\infty}^2 n \Delta t^2 \left( 1
+ \int_{0}^{t^{n}} \alpha(s)\, ds \right) e^{3 \int_{0}^{t^{n}} \alpha(s) \,ds}, 
\]
which can also be replaced by the maybe simpler estimate 
\[
|Y^{n}-Z(t^{n})|^2 \leq 
C a_{\infty}^2 n \Delta t^2 e^{4\int_{0}^{t^{n}} \alpha(s) \,ds}.
\]
This completes the proof.
\end{proof}

\begin{remark}
\label{rem:a:Lip}
Whenever $a$ is $L$-Lipschitz in time (uniformly in space) and 
$a(s,Y^n)$ is replaced by $a(t^n,Y^n)$ in the recursive definition of the sequence $(Y^n)_{n \in \NN}$ in the statement of Lemma 
\ref{lem:ZY}, there is an additional term in 
\eqref{eq:Y(t)-Z(t)}, coming from the time discretization of 
the velocity field. This term has the form:
\begin{equation}
\label{eq:a:Lip:2}
2 \int_{t^n}^t \Bigl\langle Y^n - Z(s), a (t^n,Y^n) - a(s,Y^n)
\Bigr\rangle ds,
\end{equation}
which is less than $2 L \Delta t \int_{t^n}^t \vert Y^n - Z(s) \vert ds $. By Young's inequality, we get 
\begin{equation*}
\begin{split}
2 \int_{t^n}^t \Bigl\langle Y^n - Z(s), a (s,Y^n) - a(t^n,Y^n)
\Bigr\rangle ds
&\leq L \Delta t \int_{t^n}^t \vert Y^n - Z(s) \vert^2 ds
+ L \Delta t^2. 
\end{split}
\end{equation*}
This gives a similar inequality 
to \eqref{eq:Y(t)-Z(t):2} but with $\alpha$ replaced by 
$\alpha + L \Delta t$ and $a_{\infty}$ replaced by 
$a_{\infty} + L$. 
The corresponding version of Lemma 
\ref{lem:ZY}
is easily 
derived.

Alternatively, we may
perform all the above computations with respect to the OSL constant 
of $a(t^n,\cdot)$ instead of $a(s,\cdot)$. Instead of 
\eqref{eq:a:Lip:2}, we then focus on 
\begin{equation*}
2 \int_{t^n}^t \Bigl\langle Y^n - Z(s), a (t^n,Z(s)) - a(s,Z(s))
\Bigr\rangle ds.
\end{equation*}
Then we obtain the same conclusion, 
with 
$\alpha$ replaced by 
$\alpha + L \Delta t$ and $a_{\infty}$ replaced by 
$a_{\infty} + L$, but also the integral $\int_{0}^{t_{n}}
\alpha(s) ds$ in the statement has to be replaced 
by the Riemann sum $\Delta t \sum_{k=0}^{n-1} \alpha(t^k)$.  
\end{remark}

\subsection{Distance between the Euler scheme and the random characteristics}

\begin{lemma}\label{lem:estim1}
Under the CFL condition \eqref{CFL}, consider 
the random characteristics 
$(X^n)_{n \in {\mathbb N}}$ defined in \eqref{defXj}.
Then, for any initial condition $\mu \in {\mathcal P}(\ZZ^d)$, 
it holds that, with probability $1$ under $\PP_{\mu}$,
for all $n \in {\mathbb N}$, 
\begin{equation}\label{Xijn}
X^{n+1} = X^n+ \int_{t^n}^{t^{n+1}} 
a(s,X^n) \,ds + h^n,
\end{equation}
where $h^n$ is an ${\mathcal F}^{n+1}$-measurable $\RR^d$-valued random variable that satisfies
\begin{equation}\label{eq:hn}
\begin{split}
&\E^n_{\mu}(h^n)=0\, ; \quad 
\vert h^n \vert \leq 2 \Delta x\, ; \quad \forall p \geq 1, \quad
\E^n_{\mu} \bigl( \vert h^n \vert^p
\bigr) \leq 2^p C  {a}_{\infty}
\Delta t \Delta x^{p-1},
\end{split}
\end{equation}
for a constant $C$ that depends only on $d$.

In particular, if we define
iteratively the following sequence
of random variables $(\hat{Y}^n)_{n \in {\mathbb N}}$
(constructed on the space $(\Omega,{\mathcal A})$
that supports the random characteristics):
\[
\hat{Y}^{n+1}=\hat{Y}^{n} + \int_{t^n}^{t^{n+1}} a(s,\hat{Y}^n)\,ds,
\]
with the (random) initial datum $\hat{Y}^0 = X^0 = x_{K^0}$, 
then, provided that $\Delta x \leq 1$, there exists, for any $p \geq 1$, a non-negative constant $C_p$, only depending on $p$, $d$ and $a_{\infty}$, such that
\begin{equation}
\label{eq:estim1:final}
\forall n \in {\mathbb N}, \quad
\E_{\mu}\bigl(|X^{n} - \hat{Y}^{n}|^p\bigr)^{1/p} \leq 
C_p \, e^{ \int_0^{t^n} \alpha(s)\,ds}
\bigl( 
\sqrt{t^n \Delta x}
+ \Delta 
x 
\bigr).
\end{equation}
\end{lemma}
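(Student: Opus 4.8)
The plan is to treat the two assertions in turn, first deriving the decomposition \eqref{Xijn}--\eqref{eq:hn} and then bootstrapping it into the $L^p$ bound \eqref{eq:estim1:final}. \textbf{Step 1 (decomposition and moments of $h^n$).} I would simply \emph{define} $h^n := X^{n+1}-X^n-\int_{t^n}^{t^{n+1}}a(s,X^n)\,ds$, so that \eqref{Xijn} holds by construction. Proposition \ref{propZ}(i) gives the conditional mean of the increment under $\PP_J$; by the Markov property (conditioning on $\mathcal{F}^n$ is conditioning on $K^n$, whose transition law is $P^n_{K^n,\cdot}$) this identity extends to any initial law $\mu$, so $\E^n_\mu(h^n)=0$. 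For the size bounds I exploit the structure of the chain: $X^{n+1}-X^n\in\{0\}\cup\{\pm\Delta x_i e_i\}$, whence $|X^{n+1}-X^n|\le\Delta x$, while the CFL condition \eqref{CFL} forces $a_\infty\Delta t\le\Delta x_i\le\Delta x$; thus $|h^n|\le\Delta x+a_\infty\Delta t\le 2\Delta x$. For the $p$-th conditional moment, the key is that a displacement in direction $i$ occurs with probability $\frac{\Delta t}{\Delta x_i}|{a_i}^n_{K^n}|$, so $\E^n_\mu(|X^{n+1}-X^n|^p)=\sum_i\Delta x_i^{p}\frac{\Delta t}{\Delta x_i}|{a_i}^n_{K^n}|\le d\,a_\infty\Delta t\,\Delta x^{p-1}$; combining this with $\bigl|\int_{t^n}^{t^{n+1}}a(s,X^n)\,ds\bigr|^p\le(a_\infty\Delta t)^p\le a_\infty\Delta t\,\Delta x^{p-1}$ through $|h^n|^p\le 2^{p-1}(|X^{n+1}-X^n|^p+\bigl|\int_{t^n}^{t^{n+1}}a(s,X^n)\,ds\bigr|^p)$ gives \eqref{eq:hn}.

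\textbf{Step 2 (the $L^2$ estimate).} Write $D^n:=X^n-\hat Y^n$, $g^n:=\int_{t^n}^{t^{n+1}}(a(s,X^n)-a(s,\hat Y^n))\,ds$, $S^n:=D^n+g^n$ and $\beta^n:=\int_{t^n}^{t^{n+1}}\alpha$, so that $D^{n+1}=S^n+h^n$ with $S^n$ being $\mathcal{F}^n$-measurable and $\E^n_\mu(h^n)=0$. Hence $\E^n_\mu(|D^{n+1}|^2)=|S^n|^2+\E^n_\mu(|h^n|^2)$, the cross term vanishing. The OSL condition \eqref{eq:OSL} controls the drift, $\langle D^n,g^n\rangle\le\beta^n|D^n|^2$, and $|g^n|\le2a_\infty\Delta t$, so $|S^n|^2\le|D^n|^2(1+2\beta^n)+4a_\infty^2\Delta t^2$. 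Taking full expectation and using $\E^n_\mu(|h^n|^2)\le Ca_\infty\Delta t\Delta x$ from Step 1 gives, for $V^n:=\E_\mu(|D^n|^2)$, the recursion $V^{n+1}\le(1+2\beta^n)V^n+Ca_\infty\Delta t\Delta x$, where $a_\infty^2\Delta t^2\le a_\infty\Delta t\Delta x$ by CFL. Since $V^0=0$ (as $\hat Y^0=X^0$), a discrete Gr\"onwall lemma and $n\Delta t=t^n$ yield $V^n\le Ca_\infty t^n\Delta x\,e^{2\int_0^{t^n}\alpha}$, i.e. \eqref{eq:estim1:final} for $p=2$; the range $p\in[1,2]$ then follows immediately by Jensen's inequality, $\|D^n\|_{L^p}\le\|D^n\|_{L^2}$.

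\textbf{Step 3 (higher moments).} For $p>2$ I would induct on $p\mapsto p-2$, the base interval $(0,2]$ being furnished by Step 2. From the elementary inequality $\bigl||u+h|^p-|u|^p-p|u|^{p-2}\langle u,h\rangle\bigr|\le C_p(|u|^{p-2}|h|^2+|h|^p)$, valid for $p\ge2$, I apply $\E^n_\mu$ with $u=S^n$ and $h=h^n$; the linear term dies because $\E^n_\mu(h^n)=0$, leaving $\E^n_\mu(|D^{n+1}|^p)\le|S^n|^p+C_p(|S^n|^{p-2}\E^n_\mu(|h^n|^2)+\E^n_\mu(|h^n|^p))$. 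Raising the bound $|S^n|^2\le|D^n|^2(1+2\beta^n)+4a_\infty^2\Delta t^2$ to the power $p/2$ by a mean-value inequality (and $a_\infty^2\Delta t^2\le a_\infty\Delta t\Delta x$) gives $|S^n|^p\le(1+2\beta^n)^{p/2}|D^n|^p+C_p|S^n|^{p-2}a_\infty\Delta t\Delta x$, so that $W^n:=\E_\mu(|D^n|^p)$ obeys $W^{n+1}\le(1+2\beta^n)^{p/2}W^n+C_pa_\infty\Delta t\Delta x\,\E_\mu(|S^n|^{p-2})+C_pa_\infty\Delta t\Delta x^{p-1}$. The decisive point is the factor $\Delta t\,\Delta x$ in front of the $(p-2)$-moment: inserting the \emph{sharp} inductive bound $\E_\mu(|S^n|^{p-2})\le C(e^{\int_0^{t^n}\alpha}(\sqrt{t^n\Delta x}+\Delta x))^{p-2}$ and summing the $\sim t^n/\Delta t$ terms against the Gr\"onwall weight $\prod_{j>n}(1+2\beta^j)^{p/2}\le e^{p(\int_0^{t^n}\alpha-\int_0^{t^{n+1}}\alpha)}$ produces exactly the order $(t^n\Delta x)^{p/2}$, the exponential of the hypothesis combining with the Gr\"onwall weight into the single factor $e^{p\int_0^{t^n}\alpha}$. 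Taking $1/p$-th powers closes the induction and gives \eqref{eq:estim1:final}; the additive $\Delta x$ there absorbs the lower-order remainders $C_pa_\infty\Delta t\Delta x^{p-1}$ and the drift contributions.

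\textbf{Main obstacle.} Steps 1 and 2 amount to bookkeeping once Proposition \ref{propZ} is available. The genuine difficulty is Step 3: reaching the sharp $1/2$ order for arbitrary $p$ forces propagating the exact power $(t^n\Delta x)^{p/2}$, which in turn requires feeding the \emph{sharp} inductive hypothesis at order $p-2$ into the recursion, since a merely bounded a priori moment (as provided by Lemma \ref{bounddismom}) would yield only $(t^n\Delta x)^{1/p}$ and thereby miss the order. Verifying that the exponentials cancel to leave precisely $e^{\int_0^{t^n}\alpha}$ at the level of $\|D^n\|_{L^p}$, and that the drift and $|h^n|^p$ remainders stay subordinate to the dominant term uniformly in the Gr\"onwall summation, is where the care must be concentrated.
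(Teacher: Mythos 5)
Your proposal is correct and follows essentially the same route as the paper: the same martingale decomposition $h^n=X^{n+1}-X^n-\EE^n_\mu(X^{n+1}-X^n)$ with the same conditional moment bounds, the same $L^2$ computation via the OSL condition and a discrete Gr\"onwall lemma, and the same induction in steps of two on the moment order that feeds the sharp lower-order estimate back into the recursion. The only (cosmetic) differences are the device used to expand the $p$-th power in the induction --- your second-order Taylor inequality for $|\cdot|^p$ versus the paper's binomial expansion of the squared recursion raised to the power $p$ --- and your slightly loose intermediate bound $|S^n|^p\le(1+2\beta^n)^{p/2}|D^n|^p+C_p|S^n|^{p-2}a_\infty\Delta t\,\Delta x$, whose remainder should rather involve $|D^n|^{p-2}+(a_\infty\Delta t\,\Delta x)^{p/2-1}$ (the factor $(A+B)^{p/2-1}$ from the mean-value inequality dominates $|S^n|^{p-2}$ rather than being dominated by it); since both quantities are controlled by the same inductive hypothesis, the argument closes either way.
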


\begin{proof}
The 
expansion
\eqref{Xijn}, 
with 
$h^n$ satisfying 
$\E^n_{\mu}(h^n)=0$
for each $n \in {\mathbb N}$,
is a direct consequence 
of \textit{(i)} in Proposition \ref{propZ}. 
By construction, we can write 
$h^n = X^{n+1} - X^n - \EE^n_{\mu}(X^{n+1}-X^n)$. 
Since $\vert X^{n+1} - X^n \vert \leq \Delta x$, 
we deduce that 
$\vert h^n \vert \leq 2 \Delta x$. 
Moreover, for any $p \geq 1$,
\begin{equation*}
\E^n_{\mu}
\bigl( \vert h^n \vert ^p
\bigr) \leq 2^{p-1} \Bigl[ \E^n_{\mu}
\bigl( \vert X^{n+1} - X^n \vert ^p
\bigr) +
\E^n_{\mu}
\bigl( \vert \EE^{n}_{\mu} (X^{n+1} - X^n) \vert ^p
\bigr)
\Bigr]
\leq 2^p \E^n_{\mu}
\bigl( \vert X^{n+1} - X^n \vert ^p
\bigr).
\end{equation*}
Then, the bound for $\E^n_{\mu}(\vert h^n \vert^p)$ follows from the fact that:
\begin{equation*}
\begin{split}
{\mathbb E}_{\mu}^n 
\bigl(
\vert X^{n+1} - X^n \vert^p
\bigr)
& =
{\mathbb E}_{\mu}^n 
\bigl(
\vert X^{n+1} - X^n \vert^{p}
{\mathbf 1}_{\{K^{n+1} \not = K^{n}\}}
\bigr)
\\
&\leq \Delta x^{p} \PP^{n}_{\mu}
\bigl( K^{n+1} \not = K^{n} \bigr)
\leq \Delta x^{p} \sum_{i=1}^d
\bigl(
P^n_{K^{n},K^{n}+e_{i}}
+
P^n_{K^{n},K^{n}-e_{i}}
\bigr)
\leq C \Delta x^{p-1} \Delta t
\end{split}
\end{equation*}
with $C = da_\infty$, for all $p \geq 1$. 

We split the proof of 
the second claim
\eqref{eq:estim1:final}, 

into two steps. In the first step, we will estimate $\E(|X^n-\hat{Y}^n|^p)^{1/p}$, for $p \in [1,2]$.
The second step is devoted to the analysis of $\E(|X^n-\hat{Y}^n|^p)^{1/p}$ when $p > 2$. 

{\bf First step.}
From definition \eqref{Xijn}, 
we obtain, after an obvious expansion, 
\begin{equation}
\label{eq:a:Xn+1:Yn+1}
\begin{split}
\ds |X^{n+1} - \hat{Y}^{n+1}|^2 &\leq |X^{n} - \hat{Y}^{n}|^2 
+ 2\int_{t^n}^{t^{n+1}} 
\Big\langle X^{n} - \hat{Y}^{n}, a(s,X^{n}) 
- a(s,\hat{Y}^n)
\Big\rangle\,ds 
\\[2mm]
&\hspace{15pt}\ds + 2\langle X^{n} -\hat{Y}^{n},h^n \rangle 
+ \vert h^n \vert^2
+  4 a_{\infty} \bigl( a_{\infty} \Delta t^2 + \Delta t \Delta x
\bigr),
\end{split}
\end{equation}
using the fact that $a$ is bounded and that $|h^n| \leq 2\Delta x$, see \eqref{eq:hn}.

Using the CFL condition 
\eqref{CFL} in order to bound $a_{\infty} \Delta t^2$ by 
$\Delta t \Delta x/d$, we get
\begin{equation*}
\begin{split}
\ds |X^{n+1} - \hat{Y}^{n+1}|^2 &\leq |X^{n} - \hat{Y}^{n}|^2 
+ 2\int_{t^n}^{t^{n+1}} 
\Big\langle X^{n} - \hat{Y}^{n}, a(s,X^{n}) 
- a(s,\hat{Y}^n)
\Big\rangle\,ds 
\\[2mm]
&\hspace{15pt}\ds + 2\langle X^{n} -\hat{Y}^{n},h^n \rangle 
+ \vert h^n \vert^2
+ C a_{\infty}  \Delta t \Delta x,
\end{split}
\end{equation*}
for a constant $C$ that only depends on $d$ and whose value is allowed to increase 
from line to line. Since $a$ satisfies the one-sided Lipschitz continuity condition \eqref{eq:OSL}, 
we get that, 
with probability 1 under $\PP_{\mu}$,
for all $n \in {\mathbb N}$,
\begin{equation}
\label{eqXY}
\begin{split}
&|X^{n+1} - \hat{Y}^{n+1}|^2 
\\
&\hspace{15pt} \leq \biggl(1+2 \int_{t^n}^{t^{n+1}}\alpha(s)\,ds\biggr) |X^{n} - \hat{Y}^{n}|^2
+ 2\langle X^{n}-\hat{Y}^{n},h^n\rangle 
+ \vert h^n \vert^2
+ C a_{\infty} \Delta t \Delta x.
\end{split}
\end{equation}
Recalling that $\EE^n_{\mu}(h^n)= 0$
and noticing that $X^n - \hat{Y}^n$ is ${\mathcal F}^{n}$
is measurable, we have 
\begin{equation}\label{eq:Enul}
\EE^n_{\mu}\bigl(\langle X^{n}-\hat{Y}^{n},h^n\rangle\bigr) = 
\bigl\langle X^{n}-\hat{Y}^{n},\EE^n_{\mu}(h^n)
\bigr\rangle = 0. 
\end{equation}
Now taking the conditional expectation $\EE^n_{\mu}$ in \eqref{eqXY} and 
recalling from the preliminary step of the proof that 
$\EE^n_{\mu}(\vert h^n\vert^2) 
\leq C a_{\infty} \Delta t\Delta x$
(with $C = 4d$), 
we obtain 
\[
\EE^n_{\mu}\bigl(|X^{n+1} - \hat{Y}^{n+1}|^2\bigr) \leq \biggl(1+2 \int_{t^n}^{t^{n+1}}\alpha(s)\,ds\biggr) 
|X^{n} - \hat{Y}^{n}|^2 
+ C a_{\infty} \Delta t \Delta x,
\]
for a new value of $C$. 

Taking the expectation $\EE_{\mu}$
(using the tower property $\EE_{\mu}[\cdot]=\EE_{\mu}[\EE_{\mu}^n(\cdot)]$), 
applying a discrete version of Gr\"onwall's lemma, and using also the fact that the initial datum 
verify $X^0=\hat{Y}^0$, we deduce
\begin{equation}\label{ineqX2}
\forall n \in {\mathbb N},
\quad
\E_{\mu}\bigl(|X^n - \hat{Y}^{n}|^2\bigr) \leq 
C a_{\infty} e^{2\int_0^{t^n} \alpha(s)\,ds}
n\Delta t \Delta x.
\end{equation}

Therefore, for $p\in [1,2]$, we have, thanks to H\"older's inequality, 
\begin{equation}\label{ineqX1}
\left( \E_{\mu}\bigl(|X^{n}-\hat{Y}^{n}|^p\bigr)\right)^{1/p} \leq 
\left( \E_{\mu}\bigl(|X^{n}-\hat{Y}^{n}|^2\bigr) \right)^{1/2} \leq e^{\int_0^{t^n} \alpha(s)\,ds} \, \sqrt{C a_{\infty} t^n \Delta x},
\end{equation}
which concludes the proof when $p\in [1,2]$.

{
{\bf Second step.}
In order to handle the case $p \geq 2$, we use an induction.
We assume that, for some $p\in \NN \setminus \{0,1\}$,
there exists a 
constant $c$, only depending on $p$, $d$ and $a_{\infty}$, such that, for all $1 \leq m\leq 2(p-1)$, 
for all $n \in \NN$,
\begin{equation}\label{hyprec}
\E_{\mu}(|X^n-\hat{Y}^n|^m)^{1/m} \leq c e^{\int_0^{t^n} \alpha(s)\,ds} 
\Bigl( \sqrt{t^{n} \Delta x} + \Delta x
\Bigr),
\end{equation}
which is obviously true when $p=2$ thanks to 
\eqref{ineqX1}.
From \eqref{eqXY}, we get
\begin{equation*}
|X^{n+1} - \hat{Y}^{n+1}|^{2p} 
\leq \left( e^{2\int_{t^n}^{t^{n+1}}\alpha(s)\,ds} |X^{n} - \hat{Y}^{n}|^2
+ 2\langle X^{n} -\hat{Y}^{n},h^n \rangle 
+
\vert h^n \vert^2
+ C a_{\infty} \Delta t \Delta x  \right)^p.
\end{equation*}
Expanding the right hand side, we obtain
\begin{equation}
\label{eq:cas:p>=2}
\begin{split}
&|X^{n+1} - \hat{Y}^{n+1}|^{2p} 
\leq e^{2p\int_{t^n}^{t^{n+1}}\alpha(s)\,ds} |X^{n} - \hat{Y}^{n}|^{2p} 
\\[2mm]
&\qquad + p e^{2(p-1) \int_{t^n}^{t^{n+1}}\alpha(s)\,ds} |X^{n}- \hat{Y}^{n}|^{2(p-1)} 
\Big(2\langle X^{n}-\hat{Y}^{n},h^n\rangle 
+ 
 \vert h^n \vert^2
+ C a_{\infty} \Delta t \Delta x  \Big) 
\\
&\qquad\ds + \sum_{k=2}^p \Big(^p_k\Big) e^{2(p-k) \int_{t^n}^{t^{n+1}}\alpha(s)\,ds} |X^n -\hat{Y}^n|^{2(p-k)}
\Big(2\langle X^{n}-\hat{Y}^{n},h^n\rangle 
+ 
\vert h^n \vert^2
+ C a_{\infty} \Delta t \Delta x  \Big)^k.
\end{split}
\end{equation}
By the same token as in \eqref{eq:Enul}, notice that 
(the constant $C$ being allowed to increase from line to line as long as it only depends on $d$)
\begin{equation*}
\begin{split}
&\E_{\mu}^n \biggl[|X^{n} - \hat{Y}^{n}|^{2(p-1)}
\Big(2\langle X^{n}-\hat{Y}^{n},h^n\rangle 
+ 
 \vert h^n \vert^2
+ C  a_{\infty} \Delta t \Delta x  \Big) \biggr] 
\\
&\hspace{5pt} 
= \E_{\mu}^n \biggl[|X^{n} - \hat{Y}^{n}|^{2(p-1)}
\Big( C a_{\infty} \Delta t \Delta x + 
\vert h^n \vert^2 \Big) \biggr] 
\leq C a_{\infty} \Delta t \Delta x \, |X^{n} - \hat{Y}^{n}|^{2(p-1)},
\end{split}
\end{equation*}
where we used the last estimate of \eqref{eq:hn} for the last inequality.

We proceed in a similar way with the last term in 
\eqref{eq:cas:p>=2}. 
Allowing the constant $C$ to depend upon $p$, we have, for all 
$k \in \{2,\dots,p\}$,
\begin{equation*}
\begin{split}
&\E_{\mu}^n
\biggl[
|X^{n} - \hat{Y}^{n}|^{2(p-k)}
\Bigl( 2 \langle X^{n}-\hat{Y}^{n},h^n\rangle
+ 
\vert h^n \vert^2
 + C {a_{\infty}} \Delta t \Delta x  \Big)^k \biggr]
\\
&\hspace{5pt}
\leq C \E_{\mu}^n \Bigl[ \vert X^n - \hat{Y}^n \vert^{2p-k}
\vert h^n \vert^k + 
\vert X^n - \hat{Y}^n \vert^{2(p-k)}
\vert h^n \vert^{2k} \Bigr] + C a_{\infty}^k \Delta t^k \Delta x^k
\vert X^n - \hat{Y}^n \vert^{2(p-k)}
\\[2mm]
&\hspace{5pt} \leq C \Delta t \Delta x^{k-1} \vert X^n - \hat{Y}^n \vert^{2p-k}
+ C \Delta t \Delta x^{2k-1} \vert X^n - \hat{Y}^n \vert^{2(p-k)},
\end{split}
\end{equation*}
where, once again, we used 
\eqref{eq:hn} together with the CFL condition to pass from the second to the third line. {In the last line, we allowed $C$ to depend on $d$ and $p$, but also on $a_{\infty}$.} 

Returning to 
\eqref{eq:cas:p>=2}
and taking the expectation therein (using the fact that 
$\EE_{\mu}[\cdot] = \EE_{\mu}[\EE_{\mu}^n(\cdot)]$), we finally get that: 
\begin{equation*}
\begin{split}
&\EE_{\mu} \bigl[ |X^{n+1} - \hat{Y}^{n+1}|^{2p}
\bigr] 
\leq e^{2p\int_{t^n}^{t^{n+1}}\alpha(s)\,ds} 
\EE_{\mu} \bigl[|X^{n} - \hat{Y}^{n}|^{2p} \bigr]
\\[2mm]
&\qquad\ds + C 
e^{2 (p-1) \int_{t^n}^{t^{n+1}}\alpha(s)\,ds} 
\sum_{k=2}^{2p} 
\Bigl( 
\EE_{\mu} \bigl[ 
|X^n -\hat{Y}^n|^{2p-k}
\bigr] \Delta t \Delta x^{k-1}
\Bigr).
\end{split}
\end{equation*}
Pay attention that sum above runs from $2$ to $2p$ instead of 
$2$ to $p$ in the original inequality 
\eqref{eq:cas:p>=2}. 
Plugging the induction property \eqref{hyprec}, we get, for all $n \in {\mathbb N}$, 
\begin{equation}
\label{eq:n+1:2p}
\begin{split}
&\EE_{\mu} \bigl[ |X^{n+1} - \hat{Y}^{n+1}|^{2p}
\bigr] 
\\
&\hspace{5pt} \leq e^{2p\int_{t^n}^{t^{n+1}}\alpha(s)\,ds} 
\EE_{\mu} \bigl[|X^{n} - \hat{Y}^{n}|^{2p} \bigr]
+ C
e^{2p \int_{0}^{t^{n+1}}\alpha(s)\,ds} 
\sum_{k=2}^{2p}
\Delta t \Delta x^{k-1}
\Bigl( \sqrt{t^{n} \Delta x}
+ \Delta x \Bigr)^{2p-k}
\\
&\hspace{5pt} \leq e^{2p\int_{t^n}^{t^{n+1}}\alpha(s)\,ds} 
\EE_{\mu} \bigl[|X^{n} - \hat{Y}^{n}|^{2p} \bigr]
\\
&\hspace{30pt} + C
e^{2p \int_{0}^{t^{n+1}}\alpha(s)\,ds} 
\Delta t 
\biggl( 
\Delta x^{2p-1}
+ \sum_{k=2}^{2p}
(t^{n})^{p-k/2}
\Delta x^{p+k/2-1}
\biggr),
\end{split}
\end{equation}
where we used 
the bound
$( \sqrt{t^{n} \Delta x}
+ \Delta x)^{2p-k}
\leq C [ (t^{n})^{p-k/2} \Delta x^{2p-k}
+ \Delta x^{2p-k}]$ in order to pass from the second to the third line. 
Notice now that
\begin{equation*}
\begin{split}
\sum_{k=2}^{2p}
(t^{n})^{p-k/2} \Delta x^{p+k/2-1}
&= \Delta x^{p} \sum_{k=2}^{2p}
\sqrt{t^{n}}^{2p-k} \sqrt{\Delta x}^{k-2}
\\
&\leq \Delta x^{p} \Bigl( \sqrt{t^{n}} + \sqrt{\Delta x}
\Bigr)^{2p-2}
\leq C \Delta x^{p} \bigl( (t^{n})^{p-1} + \Delta x^{p-1} \bigr). 
\end{split}
\end{equation*}
Plugging into \eqref{eq:n+1:2p}, we get 
\begin{equation*}
\begin{split}
&\EE_{\mu} \bigl[ |X^{n+1} - \hat{Y}^{n+1}|^{2p}
\bigr] 
\\
&\hspace{15pt} \leq 
e^{2p\int_{t^n}^{t^{n+1}}\alpha(s)\,ds} 
\EE_{\mu} \bigl[|X^{n} - \hat{Y}^{n}|^{2p} \bigr]
+ C
e^{2p \int_{0}^{t^{n+1}}\alpha(s)\,ds} 
\Delta t 
\Delta x^{p}
\bigl( 
(t^{n})^{p-1}
+ \Delta x^{p-1}
\bigr).
\end{split}
\end{equation*}
Iterating over $n$
and
recalling that $X^0=\hat{Y}^0$ and $n\Delta t=t^n$, we 
deduce that, for all $n \in {\mathbb N}$,
\begin{equation*}
\begin{split}
\EE_{\mu} \bigl[ |X^{n} - \hat{Y}^{n}|^{2p}
\bigr] 
&\leq C
e^{2p \int_{0}^{t^{n}}\alpha(s)\,ds} 
t^{n}
\Delta x^{p}
\bigl( (t^{n})^{p-1} + \Delta x^{p-1}
\bigr).
\end{split}
\end{equation*}
We finally 
obtain, for all $n \in {\mathbb N}$,
\begin{equation*}
\begin{split}
\EE_{\mu} \bigl[ |X^{n} - \hat{Y}^{n}|^{2p}
\bigr] 
&\leq C
e^{2p \int_{0}^{t^{n}}\alpha(s)\,ds} 
\Delta x^p
\bigl( t^{n} + \Delta x
\bigr)^p,
\end{split}
\end{equation*}
where we used Young's inequality 
to bound 
$t^n \Delta x^{2p-1}$ by $C ((t^n)^p \Delta x^p+ \Delta x^{2p})$.
Then,
\begin{equation*}
\begin{split}
\EE_{\mu} \bigl[ |X^{n} - \hat{Y}^{n}|^{2p}
\bigr]^{1/2p} 
&\leq C
e^{ \int_{0}^{t^{n}}\alpha(s)\,ds} 
\Bigl( \sqrt{t^{n} \Delta x} + \Delta x
\Bigr),
\end{split}
\end{equation*}

By H\"older's inequality, 
we conclude that \eqref{hyprec} holds for $2(p-1) \leq m \leq 2p$.
By induction, \eqref{hyprec} is satisfied for all $p\in \NN^*$. 

\vspace{6pt}

{\bf Conclusion.}
Finally, from \eqref{ineqX1}
and
\eqref{hyprec}, we conclude the proof.
}
\end{proof}

\begin{remark}
\label{rem:a:Lip:2}
In full analogy 
with Remark \ref{rem:a:Lip}, we may discuss the case when 
$a$ is $L$-Lipschitz in time (uniformly in space) and 
$a(s,\cdot)$ is replaced by $a(t^n,\cdot)$ in the recursive definitions of the sequences $(X^n)_{n \in \NN}$
and 
$(\hat Y^n)_{n \in \NN}$ in the statement of Lemma 
\ref{lem:ZY}. 
Then, 
the final result is the same provided that the 
integral $\int_{0}^{t_{n}} \alpha(s) ds$ is 
replaced by $\Delta t \sum_{k=0}^{n-1} \alpha(t^k)$. 
\end{remark}

\subsection{Proof of Theorem \ref{TH}}\label{sec:proof}

Let $\rho^n_{\Delta x} = \sum_{J\in \ZZ^d} \rho_{J}^n \delta_{x_J}$ be the measure associated 
to the numerical solution 
given by the scheme \eqref{disrho0}--\eqref{dis_num}--\eqref{def:ai} at time $t^n$. Since 
$\rho^{ini}\in \calP_p(\RR^d)$,
we first notice from Lemma \ref{bounddismom} that 
$\rho_{\Delta x}^n\in \calP_p(\RR^d)$.
By Proposition
\ref{propZ}, we have 
$\rho^n_{\Delta x}=X^n\,_\#\PP_{\rho^0_{\Delta x}}$ where $X^n$ is defined in \eqref{defXj}
and $\rho^0_{\Delta x} = \sum_{J\in \ZZ^d} \rho_{J}^0 \delta_{x_J}$, with $\rho^0$ defined in 
\eqref{disrho0}.
Let $\rho$ be the exact solution of Theorem \ref{thPR}, 
$\rho(t)=Z(t)_\#\rho^{ini}$ where $Z$ is the Filippov flow 
associated to the one-sided Lipschitz continuous velocity field $a$.

Consider now the two sequences $(Y^n)_{n \in {\mathbb N}}$
and $(\hat{Y}^n)_{n \in {\mathbb N}}$ 
respectively defined in Lemmas 
\ref{lem:ZY} and 
\ref{lem:estim1}. Each $Y^n$ is regarded as 
a mapping from $\RR^d$ into itself: the initial condition 
$Y^0$ (also equal to $Z(0)$) in the statement of Lemma 
\ref{lem:ZY} is given by the identity mapping $\RR^d \ni x 
\mapsto x \in \RR^d$, that is $Y^0(x)=x$ for all $x \in \RR^d$. 
We then call $Y^n(x)$ the value of $Y^n$ in the statement of Lemma 
\ref{lem:ZY} when $Y^0(x)=x$.
When $\RR^d$ is equipped with the 
distribution $\rho^0_{\Delta x}$, the distribution of $Y^n$
writes $Y^n{}_{\#} \rho^0_{\Delta x}$.
In comparison with, each $\hat{Y}^n$ is a random variable 
from $\Omega$ to $\RR^d$: when $\hat{Y}^0$ (also equal to 
$X^0$) has the distribution $\rho^0_{\Delta x}$, the distribution of $\hat{Y}^n$ writes 
$\hat{Y}^n{}_{\#} \PP_{\rho^0}$. It is then crucial to observe that
$\hat{Y}^n(\omega)$
may be regarded as 
 $Y^n(X^0(\omega))$. In particular, 
if both $Y^0$ and $\hat{Y}^0$ have 
$\rho^0_{\Delta x}$ as common law (although the mappings are constructed on different spaces), 
then $Y^n$ and $\hat{Y}^n$ also have the same distribution, 
namely $Y^n{}_{\#} \rho^0_{\Delta x}
= \hat{Y}^n{}_{\#} \PP_{\rho^0}$. 

As a consequence of the above discussion, we deduce from
the triangle inequality:
\begin{equation}\label{ineq}
\begin{array}{ll}
\ds W_p\bigl(\rho^n_{\Delta x},\rho(t^n)\bigr) \leq &\ds
W_p\bigl(X^n{}_{\#} \PP_{\rho^0},\hat Y^n{}_\# \PP_{\rho^0}\bigr) + W_p\bigl(Y^n{}_\# 
\rho_{\Delta x}^0,Z(t^n)_\#\rho^0_{\Delta x}\bigr) 
\\[2mm]
&\ds + W_p\bigl(Z(t^n)_\#\rho^0_{\Delta x},Z(t^n)_\#\rho^{ini}\bigr).
\end{array}
\end{equation}
We will bound each term of the right hand side separately.

{\bf Initial datum}. Let us first consider the last term in the right hand side of \eqref{ineq}.
We have
$$
W_p(Z(t^n)_\#\rho^0_{\Delta x},Z(t^n)_\#\rho^{ini}) \leq L_Z(t^n,0) W_p(\rho_{\Delta x}^0,\rho^{ini}).
$$
Indeed, let $\pi$ be an optimal map in $\Gamma_0(\rho_{\Delta x}^0,\rho^{ini})$, i.e.
$$
W_p(\rho_{\Delta x}^0,\rho^{ini})^p = \int_{\RR^d\times\RR^d} |x-y|^p \pi(dx,dy).
$$
Then $\gamma=(Z(t^n),Z(t^n))_\# \pi$ is a map with marginals $Z(t^n)_\#\rho_{\Delta x}^0$ and
$Z(t^n)_\#\rho^{ini}$. It implies
$$
W_p\big(Z(t^n)_\#\rho^0_{\Delta x},Z(t^n)_\#\rho^{ini}\big)^p \leq \int_{\RR^d\times\RR^d} |x-y|^p \gamma(dx,dy) 
= \int_{\RR^d\times\RR^d} |Z(t^n,x)-Z(t^n,y)|^p \pi(dx,dy).
$$
From Lemma \ref{lem:JacobZ}, we know that the flow $Z$ is Lipschitz continuous with Lipschitz constant $L_Z(t^n,0)$. 
Thus
$$
W_p\bigl(Z(t^n)_\#\rho^0_{\Delta x},Z(t^n)_\#\rho^{ini}\bigr) \leq L_Z(t^n,0) 
\left(\int_{\RR^d\times\RR^d} |x-y|^p \pi(dx,dy)\right)^{1/p}.
$$
Precisely, using inequality \eqref{bound1Z} in Lemma \ref{lem:JacobZ}, we deduce 
\begin{equation}\label{dWinit}
W_p\bigl(Z(t^n)_\#\rho^0_{\Delta x},Z(t^n)_\#\rho^{ini}\bigr) \leq e^{\int_0^t \alpha(s)ds} W_p(\rho_{\Delta x}^0,\rho^{ini}).
\end{equation}

Now, for $\rho^{ini}\in \calP_p(\RR^d)$, we recall the definition
\begin{equation}\label{init}
\rho_{\Delta x}^0 = \sum_{J\in \ZZ^d} \rho_{J}^0 \delta_{x_J}, \quad \mbox{ with }\quad
\rho_{J}^0 = \int_{C_J} \rho^{ini}(dx) {= \rho^{ini}(C_J)}.
\end{equation}
Let us define $\tau:[0,1]\times \RR^d \to \RR^d$ by
$\tau(\sigma,x) = \sigma x_J + (1-\sigma) x$, for $x \in C_J$.
We have that $\tau(0,\cdot)=\mathrm{id}$ and $\tau(1,\cdot)_\# \rho^0 = \rho_{\Delta x}^0$. Thus 
\begin{equation}
\label{eq:wp:condition:initiale}
\begin{split}
W_p(\rho_{\Delta x}^0,\rho^{ini})^p &\leq  
\int_{\RR^d} |x-y|^p \, (\mathrm{id}\times\tau(1,\cdot))_\# \rho^{ini}(dx,dy) 
\\
& \leq  \sum_{J\in \ZZ^d} \int_{C_J} |x-x_J|^p \,\rho^{ini}(dx),
\end{split}
\end{equation}
where we use \eqref{init} for the last inequality.
We deduce $W_p(\rho_{\Delta x}^0,\rho^{ini}) \leq \Delta x$.
Injecting this latter inequality into \eqref{dWinit}, we obtain
\begin{equation}\label{boundinit}
W_p\bigl(Z(t^n)_\#\rho^0_{\Delta x},Z(t^n)_\#\rho^{ini}\bigr) \leq e^{\int_0^{t^n} \alpha(s)ds} \Delta x.
\end{equation}

{\bf Second term.} For the second term of the right hand side of \eqref{ineq}, 
by the standard property \eqref{eqWasser} of the Wasserstein distance, one has 
$$
W_p\bigl(Y^n{}_\#\rho_{\Delta x}^0,Z(t^n)_\# \rho_{\Delta x}^0\bigr) \leq 
\|Y^n-Z(t^n)\|_{L^p(\rho_{\Delta x}^0)} \leq \sup_{J\in \ZZ^d} |Y^n(x_J)-Z(t^n,x_J)|.
$$
Then applying Lemma \ref{lem:ZY}, we deduce that there exists a non-negative constant $C$ such that
\begin{equation}\label{ineq1}
\begin{split}
W_p(Y^n{}_\#\rho_{\Delta x}^0,Z(t^n)_\# \rho_{\Delta x}^0) &\leq C e^{2 \int_0^{t^n} \alpha(s)ds} \sqrt{t^n {a_{\infty}\Delta t}}
\\
&\leq C e^{2 \int_0^{t^n} \alpha(s)ds} \sqrt{t^n \Delta x},
\end{split}
\end{equation}
where 
we used again the CFL condition \eqref{CFL}
and where
$C$ only depends on $d$ and $a_{\infty}$.

{\bf First term.} 
We consider finally the first term in the right hand side of \eqref{ineq}. By
\eqref{eqWasser:omega},
$$
W_p\big(X^n{}_\#\PP_{\rho^0},\hat{Y}^n{}_\# 
\PP_{
\rho^0}\big) \leq \EE_{\rho^0} \bigl[ \vert X^n - \hat{Y}^n
\vert^p \bigr]^{1/p}.
$$
From Lemma \ref{lem:estim1}, we deduce that there exists a 
constant $C_{p}$, only depending on $p$, $d$ and $a_{\infty}$, such that
\begin{equation}\label{ineq2}
W_p\big(X^n{}_\#\PP_{\rho^0},\hat{Y}^n{}_\# 
\PP_{
\rho^0}\big)
\leq 
C_p \, e^{\int_0^{t^n} \alpha(s)\,ds}\,
\Bigl( \sqrt{t^{n} \Delta x}
+ \Delta x
\Bigr).
\end{equation}

{\bf Conclusion.} 
Injecting inequalities \eqref{boundinit}, \eqref{ineq1} and \eqref{ineq2}
into \eqref{ineq}
we deduce, for all $n\in \NN^*$,
\[
W_p\bigl(\rho^n_{\Delta x},\rho(t^n)\bigr) \leq C_{p} \, e^{{2} \int_0^{t^n} \alpha(s)ds} 
\Bigl( \sqrt{t^{n} \Delta x}
+ \Delta x
\Bigr),
\]
for a new value of $C_{p}$.

\begin{remark}
\label{rem:a:Lip:3}
When 
$a$ is $L$-Lipschitz in time (uniformly in space) and 
$a(s,\cdot)$ is replaced by $a(t^n,\cdot)$ in the 
definition 
of the upwind scheme
\eqref{def:ai}, 
Remarks
\ref{rem:a:Lip} 
and 
\ref{rem:a:Lip:2}
say that the final result holds true but with 
$\alpha$ replaced by $\alpha + L \Delta t$
and with $\int_{0}^{t_{n}} \alpha(s) ds$ replaced by 
the Riemann sum $\Delta t \sum_{k=0}^{n-1} \alpha(t^k)$. 
\end{remark}

\section{One-dimensional examples}
\label{sec:num}

In the aim to show the optimality of the present result, we perform both an exact computation 
of the error in a very simple case and provide some numerical simulations, 
in dimension 1.
We first recall that, in the one-dimensional case, the expression of the Wasserstein distance simplifies.
Indeed, any probability measure $\mu$ on the real line $\RR$
can be described thanks to its cumulative distribution function
$M(x)=\mu((-\infty,x ])$, which is a right-continuous
and non-decreasing function with $M(-\infty)=0$ and $M(+\infty)=1$.
Then we can define the generalized inverse $F_\mu$ of $M$ (or monotone rearrangement of $\mu$)
by $F_\mu(z)=M^{-1}(z):=\inf\{x\in \RR / M(x) > z\}$; 
it is a right-continuous and non-decreasing function, defined on $[0,1]$. 
For every non-negative Borel-measurable map $\xi : \RR \rightarrow \RR$, we have 
$$
\int_\RR \xi(x) \mu(dx) = \int_0^1 \xi(F_\mu(z))\,dz.
$$
In particular, $\mu\in \calP_p(\RR)$ if and only if 
$F_\mu\in L^p((0,1))$.
Moreover, in the one-dimensional setting, there exists a unique optimal transport plan
realizing the minimum in \eqref{defWp}.
More precisely, if $\mu$ and $\nu$ belong to $\calP_p(\RR)$, with monotone rearrangements
$F_\mu$ and $F_\nu$, then $\Gamma_0(\mu,\nu)=\{(F_\mu,F_\nu)_\# {\mathbb{L}}_{(0,1)}\}$ where 
${\mathbb{L}}_{(0,1)}$ is the restriction to $(0,1)$ of the Lebesgue measure.
Then we have the explicit expression of the Wasserstein distance (see \cite{rachev,Villani2}) 
\begin{equation}\label{dWF-1}
W_p(\mu,\nu) = \left(\int_0^1 |F_\mu(z)-F_\nu(z)|^p\,dz\right)^{1/p},
\end{equation}
and the map $\mu \mapsto F_\mu$ is an isometry between $\calP_p(\RR)$ and the convex subset
of (essentially) non-decreasing functions of 
$L^p((0,1))$.

We will use this expression \eqref{dWF-1} of the Wasserstein distance in dimension 1
in our numerical simulations to estimate the numerical error of the upwind scheme \eqref{dis_num}.
This scheme in dimension 1 on a Cartesian mesh reads, with time step 
$\Delta t$ and cell size $\Delta x$:
$$
\rho_j^{n+1} = \rho_j^n - \frac{\Delta t}{\Delta x}\left((a_{j}^n)^+ \rho_j^n 
-(a_{j+1}^n)^- \rho_{j+1}^n - (a_{j-1}^n)^+ \rho_{j-1}^n
+(a_{j}^n)^- \rho_j^n \right).
$$
With this scheme, we define the probability measure $\rho_{\Delta x}^n =\sum_{j\in \ZZ} \rho_j^n\delta_{x_j}$.
Then the generalized inverse of $\rho_{\Delta x}^n$, denoted by $F_{\Delta x}$, is given by
$$
F_{\Delta x}(z) = x_{j+1}, \qquad \mbox{for } z\in \Big[\sum_{k\leq j}\rho_{k}^n,\sum_{k\leq j+1}\rho_{k}^n\Big).
$$

\subsection{Proof of the optimality}

We here consider the very simple case where $a = 1$ and $\rho^{ini} = \delta_0$, thus $\rho_j^0 = \delta_{0j}$. 
In this case, the solution is given by 
$\rho(t) = \delta_t$. For the sake of simplicity, we choose $\Delta t$ and $\Delta x$ such that $\Delta t/\Delta x = 1/2$. 
The numerical scheme thus simplifies to $\rho_j^{n+1} = \rho_j^n -  1/2(\rho_j^n - \rho_{j-1}^n)$, and it is a simple exercise 
to show that then the numerical solution is 
\[
\rho_j^n = \left\{\begin{array}{l}
0 \mbox{ if } j < 0, 
\vspace{3pt}
\\
\binom{n}{j} (1/2)^n \mbox{ if } 0 \leq j \leq n, 
\vspace{3pt}
\\
0 \mbox{ if } j > n. 
\end{array}\right. 
\]
For any discrete time $t^n$, $n \in \NN$, $W_1(\rho(t), \rho_{\Delta x}^n)$ is the sum over $j$ of the distance 
$|j\Delta x - n \Delta t|$ of the cell number $j$ to the position of the Dirac mass of the exact solution, multiplied by 
the mass associated with this cell, $\rho_j^n$: 
\[
W_1(\rho(t^n), \rho_{\Delta x}^n) = \sum_{j = 0}^{n} \rho_j^n | j \Delta x - n \Delta t| 
= \sum_{j = 0}^{n} \binom{n}{j} (1/2)^n | j \Delta x - n \Delta t|
\]
The right-hand side may be written as 
$\Delta x \times \EE[\vert S_{n} - \EE(S_{n}) \vert]$, 
where $S_{n}$ is a binomial random variable
with $n$ as number of trials and $1/2$ as parameter of success. 
Recalling that the variance of $S_{n}$ is $n/4$, 
we know from the central limit theorem that 
\begin{equation*}
\lim_{n \rightarrow \infty}
\frac{2}{\sqrt{n}}
\EE[\vert S_{n} - \EE(S_{n}) \vert]
= \frac{1}{\sqrt{2 \pi}} \int_{\RR} \vert x \vert \exp \bigl( - \frac{x^2}{2} \bigr) dx
= \frac{\sqrt{2}}{\sqrt{\pi}} \int_{0}^{\infty}
x \exp \bigl( - \frac{x^2}{2} \bigr) dx
= \frac{\sqrt{2}}{\sqrt{\pi}}. 
\end{equation*}
Therefore,
\[
W_1(\rho(t^n), \rho_{\Delta x}^n) 
\sim_{n \rightarrow \infty} \frac1{\sqrt{2 \pi}} \sqrt{n \Delta x^2}
= \frac1{\sqrt{\pi}} \sqrt{t^n \Delta x},
\]
which proves the optimality of the one half order of convergence. Remark furthermore that, due to the linearity of both the 
equation and the scheme, this provides a direct proof of the convergence to the one half order of the scheme with any initial 
probability measure datum (when the velocity $a$ is constant, and at least when $a \Delta t/\Delta x = 1/2$). 
\vskip 4pt

Of course, one may bypass the use of the central limit theorem 
and perform the computations explicitly.
Choose for instance $n$ of the form $n = 2k$, $k \in \NN$. Then, thanks to the parity 
of the binomial coefficients
and to the fact that, for $j=k$, $2k \Delta t = k \Delta x = j \Delta x$, we have
\begin{align*}
&W_1(\rho(t^n), \rho_{\Delta x}^n) 
\\
&= 2 \sum_{j = 0}^{k-1} \binom{2k}{j} (1/2)^{2k} (2k \Delta t - j \Delta x ) 
= 2 \sum_{j = 0}^{k-1} \binom{2k}{j} (1/2)^{2k} (k \Delta x - j \Delta x ) \\
&= 2\Delta x \sum_{j = 0}^{k-1} \binom{2k}{j} (1/2)^{2k} (k - j) = 2\Delta x \left( k \sum_{j = 0}^{k-1} 
\binom{2k}{j} (1/2)^{2k} - k\sum_{j = 1}^{k-1} \binom{2k-1}{j-1} (1/2)^{2k-1}\right) \\
&= 2k\Delta x \left( \sum_{j = 0}^{k-1} 
\binom{2k}{j} (1/2)^{2k} - \sum_{j = 0}^{k-2} \binom{2k-1}{j} (1/2)^{2k-1} \right). 
\end{align*}
Using the two identities 
\begin{equation*}
\begin{split}
&2\sum_{j = 0}^{k-1} \binom{2k}{j} (1/2)^{2k} =  1 - \binom{2k}{k} (1/2)^{2k},
\\
&2\sum_{j = 0}^{k-2} \binom{2k-1}{j} (1/2)^{2k-1} =  1 - 2\binom{2k-1}{k-1} (1/2)^{2k-1},
\end{split}
\end{equation*} 
this rewrites 
\begin{align*}
W_1(\rho(t^n), \rho_{\Delta x}^n) &= k \Delta x \left( 1 - \binom{2k}{k} (1/2)^{2k} - 1 + \binom{2k-1}{k-1} (1/2)^{2k-2}\right) 
\\
&= k \Delta x \left( 4\binom{2k-1}{k-1} - \binom{2k}{k} \right) (1/2)^{2k} 
= k \Delta x \binom{2k}{k}  (1/2)^{2k}. 
\end{align*}
From Stirling's formula, we thus recover that 
\[
W_1(\rho(t^n), \rho_{\Delta x}^n) \sim_{n \rightarrow \infty} k \Delta x \frac{4^k}{\sqrt{k \pi}} (1/2)^{2k} = \frac{1}{\sqrt{\pi}} \sqrt{t^n \Delta x}. 
\]

\subsection{Numerical illustration}

We present in the following several numerical examples for which we compute the numerical
error in the Wasserstein distance $W_1$ using formula \eqref{dWF-1}.
For these computations, we choose the final time $T=2$ and the computational domain is $[-2.5,2.5]$.
We compute the error in the Wasserstein distance $W_1$ for different space and time step to 
estimate the convergence order.

{\bf Example 1.} 
We consider a velocity field given by $a(t,x)=1$ for $x<0$ and $a(t,x)=\frac{1}{2}$ for $x\geq 0$.
Since $a$ is non-increasing, $a$ satisfies the OSL condition \eqref{eq:OSL}.
For this example, we choose the initial datum $\rho^{ini}=\delta_{x_0}$ with $x_0=-0.5$.
Then the solution to the transport equation \eqref{eq:transp} is given by
$$
\rho(t,x) = \delta_{t+x_0}(x) \quad \mbox{ for } t<-x_0;\quad
\rho(t,x) = \delta_{\frac 12 (t+x_0)}(x) \quad \mbox{ for } t\geq -x_0.
$$
Then the generalized inverse is given for $z\in [0,1)$
by $F_\rho(t,z) = t+x_0$ if $t <-x_0$, $F_\rho(t,z)=\frac 12 (t+x_0)$ if $t\geq -x_0$.
Therefore, denoting $u_j^n=\sum_{k\leq j} \rho_k^n$, we can compute easily the error at time $t^n=n\Delta t$,
$$
e^n:=W_1(\rho(t^n),\rho_{\Delta x}^n) = \sum_{k\in\ZZ} \int_{u_{k-1}^n}^{u_k^n} |x_j - F_\rho(t^n,z)|dz.
$$
Then we define the numerical error as $e=\max_{n\leq T/\Delta t} e^n$.
We display in Figure \ref{fig:error} the numerical error with respect to the number
of nodes in logarithmic scale computed with this procedure for different time steps.
We observe that the computed numerical error is of order $1/2$.
This suggests the optimality of the result in Theorem \ref{TH}.
\begin{figure}[!ht]
\centering\includegraphics[width=8cm,height=5.5cm]{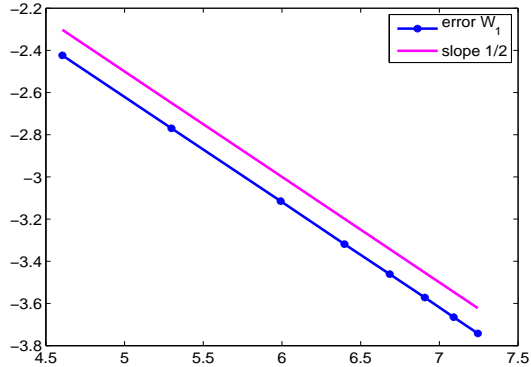}
\caption{Numerical error with respect to the number of nodes in logarithmic scale for the upwind scheme 
in Wasserstein distance $W_1$ in the case of example 1: 
initial datum given by a Dirac and velocity field $a(t,x)=1$ for $x<0$ and $a(t,x)=1/2$ for $x\geq 0$.}
\label{fig:error}
\end{figure}

{\bf Example 2.} 
We consider the same velocity field as above, given by $a(t,x)=1$ for $x<0$ and $a(t,x)=\frac{1}{2}$ for $x\geq 0$.
However, we choose for initial datum the piecewise constant function
$\rho^{ini} = \mathbf{1}_{[-1,1]}.$
Then the solution to the transport equation \eqref{eq:transp} is given by
$$
\rho(t,x) = 
\left\{\begin{array}{ll}
\mathbf{1}_{[-1+t,0)} + 2\, \mathbf{1}_{[0,t/2)} + \mathbf{1}_{[t/2,1+t/2)}, &\qquad \mbox{ for }t\leq 1, \\[2mm]
2\, \mathbf{1}_{[\frac 12 (t-1),\frac t2)} + \mathbf{1}_{[t/2,1+t/2)}, &\qquad \mbox{ for } t > 1.
\end{array}\right.
$$
We perform the numerical computation as in the first example.
Figure \ref{fig2} displays a comparison between the numerical solution $\rho_{\Delta x}$
and the exact solution $\rho$ at time $T=2$. As expected we observe numerical diffusion.
The numerical error in Wasserstein distance $W_1$ is given Figure \ref{fig2:error}-left.
We observe that the numerical error seems to be of order 1 in this case.
However, since the solution stays in $L^1$, we can estimate the numerical error in 
$L^1$, which is provided in Figure \ref{fig2:error}-right. 
We observe that this numerical error is of order 1/2.
\begin{figure}[!ht]
\centering\includegraphics[width=8.7cm,height=5.7cm]{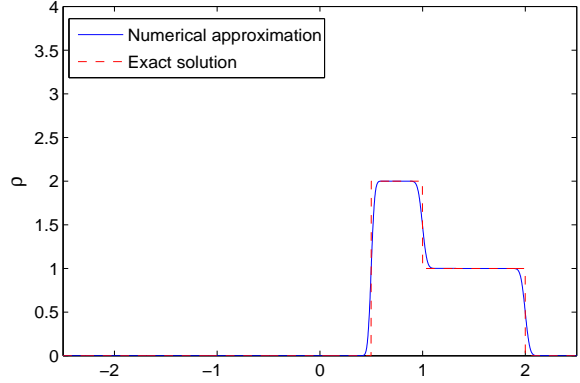}
\caption{Comparison between the numerical approximation obtained with the upwind scheme and the 
exact solution at time $T=2$ for the second example.}
\label{fig2}
\end{figure}
\begin{figure}[!ht]
\centering\includegraphics[width=8cm,height=5.5cm]{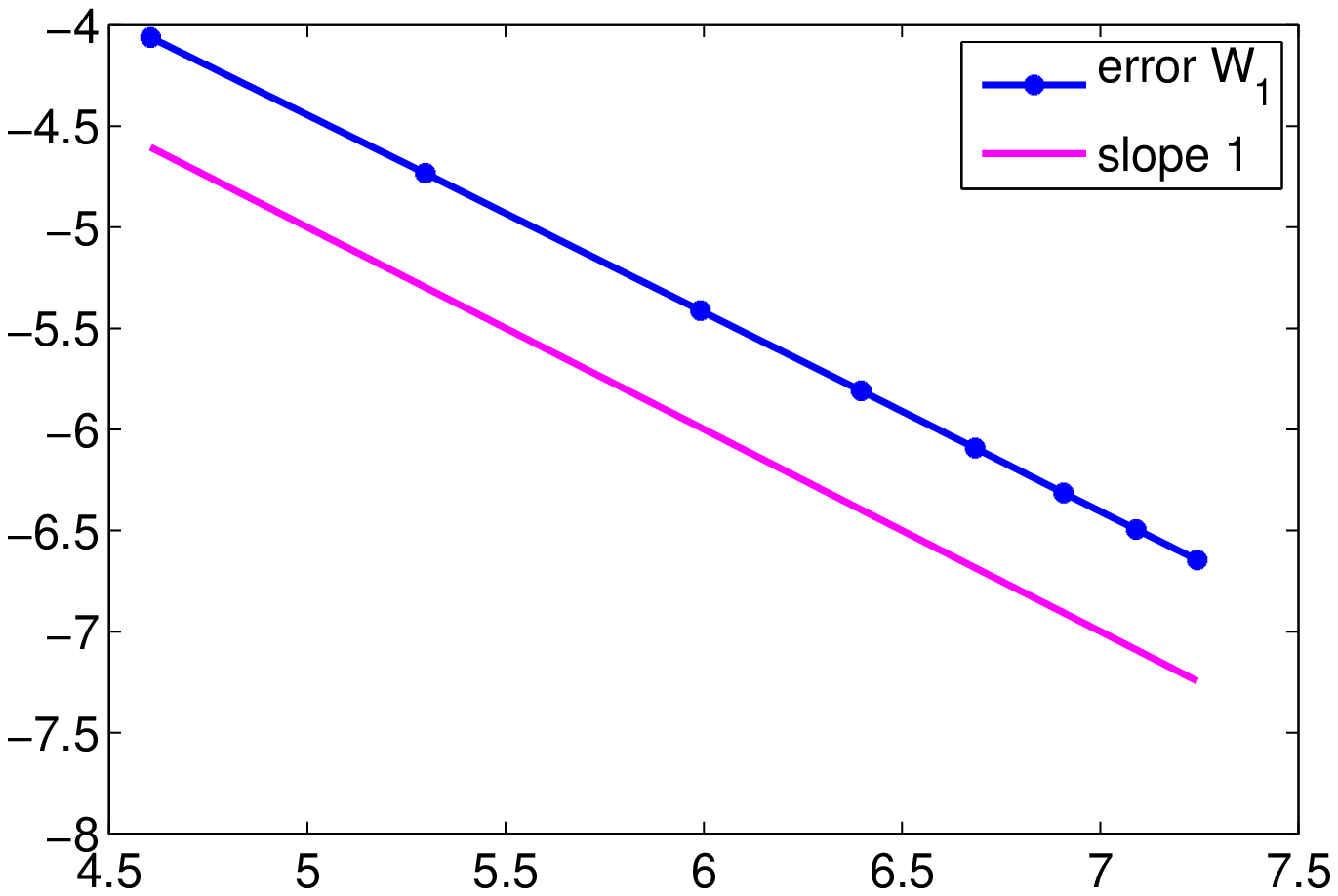}
\includegraphics[width=8cm,height=5.5cm]{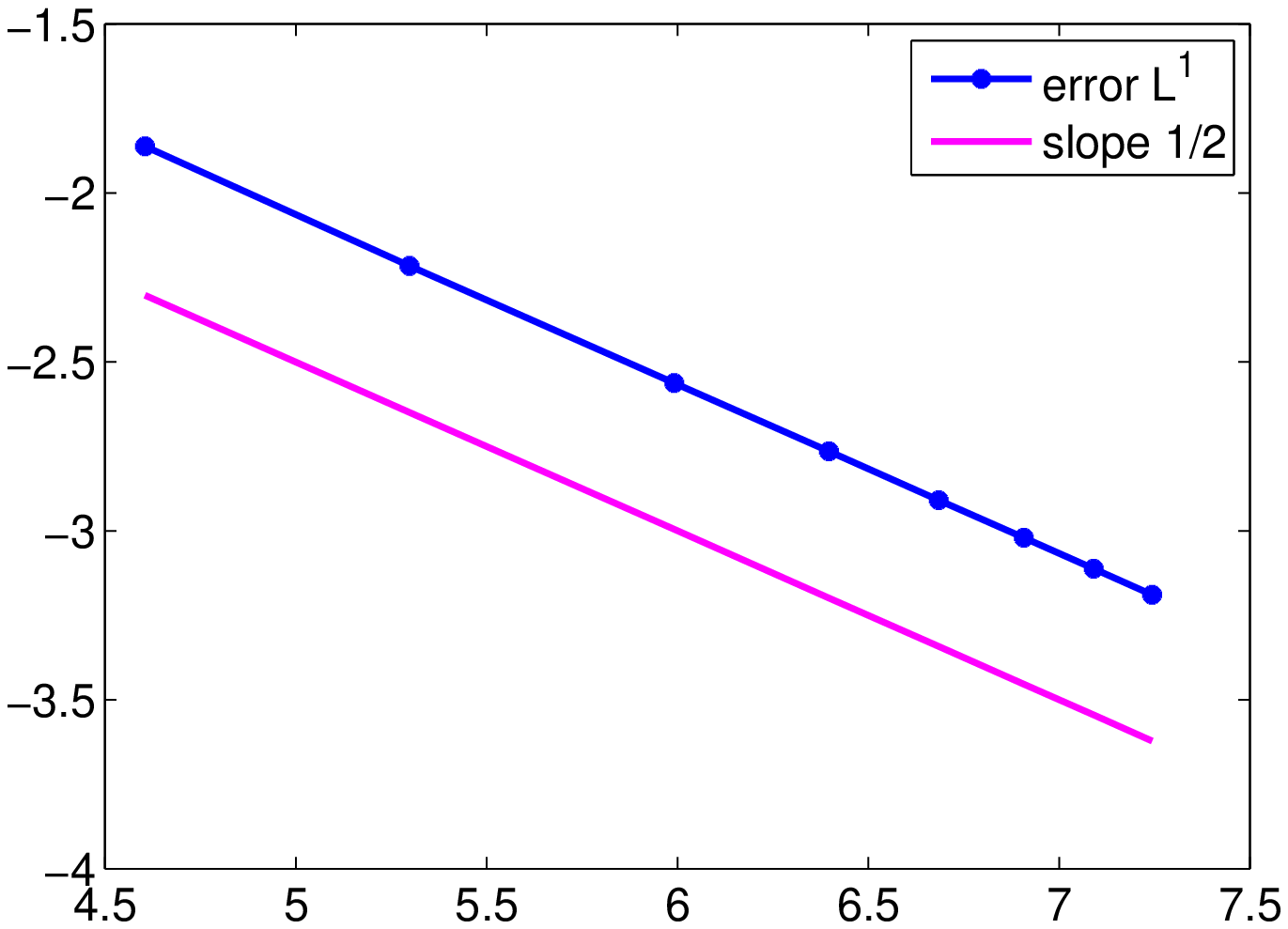}
\caption{Left: Numerical error with respect to the number of nodes
for the upwind scheme in Wasserstein distance $W_1$ 
in logarithmic scale for the second example. Right: Numerical error for the upwind scheme in $L^1$ 
norm in logarithmic scale for the second example.}
\label{fig2:error}
\end{figure}

{\bf Example 3.}
We consider the velocity field $a(t,x)=2$ for $x<\min (t,1)$ and $a(t,x)=1$ for $x\geq\min (t,1)$.
Since $a$ is non-increasing with respect to $x$, it satisfies the one-sided Lipschitz continuity condition.
The initial datum is given by: $\rho^{ini} = \mathbf{1}_{[-1,0]}$.
In this case, the solution to the transport equation \eqref{eq:transp} is given by
$$
\rho(t,x) = 
\left\{\begin{array}{ll}
\mathbf{1}_{[-1+2t,t)} + t \,\delta_{t}, &\qquad \mbox{ for }t < 1, \\[2mm]
\delta_{t}, &\qquad \mbox{ for } t \geq 1.
\end{array}\right.
$$
We deduce the expression of the generalized inverse, 
$$
F_\rho(z)= \left\{\begin{array}{ll}
(z-1+t) \mathbf{1}_{[0,1-t)} + \mathbf{1}_{[1-t,1)}, & \qquad \mbox{ for } t<1, \\[2mm]
t, &\qquad  \mbox{ for } t\geq 1.
\end{array}\right.
$$
Performing the numerical computation, we obtain the numerical error displayed in Figure \ref{fig3:error}.
We observe that in this case the order of the convergence is $1/2$.
Compared to example 2, although the initial datum is regular (piecewise constant), we have 
the formation of a Dirac delta in finite time. Then the solution is defined as a measure 
and the observed numerical order of convergence falls down to $1/2$.
\begin{figure}[!ht]
\centering\includegraphics[width=8cm,height=5.5cm]{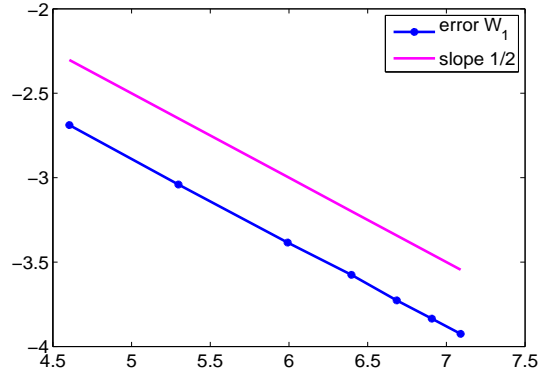}
\caption{Numerical error with respect to the number of nodes in logarithmic scale for the upwind scheme 
in Wasserstein distance $W_1$ in the case of example 3 for which a Dirac delta is created
from the initial datum $\rho^{ini} = \mathbf{1}_{[-1,0]}$.}
\label{fig3:error}
\end{figure}

As a conclusion, these numerical results seem to indicate that as long as the numerical solution 
belongs to $L^1\cap BV(\RR^d)$, the convergence of the upwind scheme is of order $1$ in 
Wasserstein distance.
However, when the velocity field is only bounded and one-sided Lipschitz continuous, the solution
might no longer be a function; for instance in example 3 above, Dirac deltas are created for $t>0$
althought the initial datum is piecewise constant.
When such singularities appear, examples 1 and 3 indicate 
that convergence order falls down to $1/2$ which shows the optimality of Theorem \ref{TH}.

\newpage

\appendix

\section{Generalization to other finite volume schemes}\label{ofvs}

For simplicity of the notations, we have presented our analysis for an upwind
scheme. In this appendix, we generalize our approach to other schemes on Cartesian grids.
With the notation above, for $J\in\ZZ^d$, we consider the scheme:
\begin{equation}\label{schemegene}
\rho_J^{n+1} = \rho_J^n - \sum_{i=1}^d \frac{\Delta t}{\Delta x_i}
\Big( g_{J+\frac 12 e_i}^{n}(\rho_J^n,\rho_{J+e_i}^n) - g_{J-\frac 12 e_i}^{n}(\rho_{J-e_i}^n,\rho_{J}^n) \Big),
\end{equation}
where we take the general form for the flux
$$
g^{n}_{J+\frac 12 e_i}(u,v) = \zeta^{n}_{J+\frac 12 e_i} u -\beta^{n}_{J+\frac 12 e_i} v.
$$
We make the following assumptions on the coefficients:
\begin{equation}\label{hypbeta}
0 \leq \zeta_{J+\frac 12 e_i}^{n} \leq \zeta_\infty, \quad 0 \leq \beta_{J+\frac 12 e_i}^{n} \leq \beta_\infty.
\end{equation}
Then, equation \eqref{schemegene} can be rewritten as
$$
\rho_J^{n+1} = \rho_J^n\biggl(1 - \sum_{i=1}^d \frac{\Delta t}{\Delta x_i} 
\bigl(\zeta_{J+\frac 12 e_i}^{n}+\beta_{J-\frac 12 e_i}^{n}\bigr) \biggr) +
\sum_{i=1}^d \frac{\Delta t}{\Delta x_i} \Big(\beta^{n}_{J+\frac 12 e_i} \rho_{J+e_i}^n 
+ \zeta_{J-\frac 12 e_i}^{n} \rho_{J-e_i}^n \Big).
$$
Assuming that the following CFL condition holds
\begin{equation}\label{CFLgene}
(\beta_\infty + \zeta_\infty) \sum_{i=1}^d \frac{\Delta t}{\Delta x_i} \leq 1,
\end{equation}
the scheme is clearly non-negative.
We define then the random characteristics as in Section \ref{sec:proba} by \eqref{defXj} 
where the transition matrix at time $n$ is now given by
$$
P_{J,L}^{n} = \left\{
\begin{array}{ll}
\ds 1 - \sum_{i=1}^d \frac{\Delta t}{\Delta x_i} 
(\zeta^{n}_{J+\frac 12 e_i}+\beta^{n}_{J-\frac 12 e_i}) \qquad & \mbox{when } L=J, 
\vspace{5pt}
\\
\ds \frac{\Delta t}{\Delta x_i} \zeta^{n}_{J+\frac 12 e_i}
\qquad & \mbox{when } L=J+e_i, \quad \mbox{for } i=1,\ldots,d, 
\vspace{5pt}
\\
\ds \frac{\Delta t}{\Delta x_i} \beta^{n}_{J-\frac 12 e_i}
\qquad & \mbox{when } L=J-e_i, \quad \mbox{for } i=1,\ldots,d, 
\vspace{5pt}
\\
0 & \mbox{otherwise.}
\end{array}
\right.
$$
It is clear that Lemma \ref{lem:transport:rho} and Proposition \ref{propZ} (ii) hold true 
with this random characteristics.
We compute
$$
\E^n_J\bigl(X^{n+1}-X^n\bigr) = \sum_{i=1}^d \left( \Delta x_i \, \zeta^{n}_{J+\frac 12 e_i} \frac{\Delta t}{\Delta x_i}
- \Delta x_i \, \beta^{n}_{J-\frac 12 e_i} \frac{\Delta t}{\Delta x_i}\right)e_i.
$$
Thus
$$
\E^n_J\bigl(X^{n+1}-X^n\bigr) = \Delta t \sum_{i=1}^d \bigl(\zeta^{n}_{J+\frac 12 e_i} - \beta^{n}_{J-\frac 12 e_i}\bigr)e_i.
$$
We deduce the following result:
\begin{proposition}
Under the assumptions of Theorem \ref{TH} on $\rho^{ini}$ and $a$, 
assume further that the bounds \eqref{hypbeta} and the CFL condition \eqref{CFLgene} hold true.

If moreover the weights $(((\zeta^n_{J+e_{i}/2})_{i=1,\dots,d})_{J \in \ZZ^d})_{n \in \NN}$ and $(((\beta^n_{J+e_{i}/2})_{i=1,\dots,d})_{J \in \ZZ^d})_{n \in \NN}$ satisfy
\begin{equation}\label{relbeta}
\zeta^n_{J+\frac 12 e_i} - \beta^n_{J-\frac 12 e_i} = {a_i}^n_{J},
\end{equation}
where ${a_i}^n_J$ is given in \eqref{def:ai},
then, the result of Theorem \ref{TH} still holds true for the scheme \eqref{schemegene}.
\end{proposition}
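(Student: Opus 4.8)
The plan is to observe that the proof of Theorem \ref{TH} is purely structural: the specific upwind form of the scheme enters only through two ingredients, namely the pushforward representation of the numerical measure (Proposition \ref{propZ}) and the control of the centred increments $h^n$ in Lemma \ref{lem:estim1}. Both ingredients survive the generalization once \eqref{hypbeta}, \eqref{CFLgene} and \eqref{relbeta} are assumed, while the remaining pieces (Lemma \ref{lem:ZY} and the triangle-inequality assembly of Section \ref{sec:proof}) involve only the Filippov flow $Z$, the deterministic Euler sequence $(Y^n)_{n\in\NN}$ and the velocity field $a$, none of which depend on the numerical fluxes. So the whole task reduces to re-checking those two ingredients for the chain with transition matrix $P^n_{J,L}$ built from $\zeta$ and $\beta$.

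First I would record the analogue of Proposition \ref{propZ}. Part (ii), i.e. $\rho^n_{\Delta x} = X^n{}_\#\PP_{\rho^0}$, follows from Lemma \ref{lem:transport:rho} applied to the new transition matrix exactly as before, and is already noted to be immediate. For part (i) I would use the computation carried out just above the statement: with the transition matrix built from $\zeta$ and $\beta$ one has $\E^n_J(X^{n+1}-X^n) = \Delta t\sum_{i=1}^d(\zeta^n_{J+\frac12 e_i} - \beta^n_{J-\frac12 e_i})e_i$. Condition \eqref{relbeta} turns the bracket into ${a_i}^n_{J}$, and \eqref{def:ai} then yields $\E^n_J(X^{n+1}-X^n) = \int_{t^n}^{t^{n+1}}a(s,X^n)\,ds$, which is precisely \eqref{en}. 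This is the crux of the generalization: \eqref{relbeta} is exactly the demand that the drift of the random walk reproduce the time-averaged velocity ${a_i}^n_{J}$, and it is only this first moment of the jump law -- not the detailed form of the fluxes -- that the analysis constrains.

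Next I would revisit Lemma \ref{lem:estim1}. The expansion \eqref{Xijn} with $\E^n_\mu(h^n)=0$ is a direct consequence of the drift identity just obtained. Since the chain still jumps only to nearest neighbours, $|X^{n+1}-X^n|\le\Delta x$, hence $|h^n|\le 2\Delta x$; and because now $P^n_{K^n,K^n\pm e_i}\le\frac{\Delta t}{\Delta x_i}\max(\zeta_\infty,\beta_\infty)$, the same computation as in the original proof gives
\[
\E^n_\mu\bigl(|X^{n+1}-X^n|^p\bigr) = \sum_{i=1}^d \Delta x_i^p\bigl(P^n_{K^n,K^n+e_i}+P^n_{K^n,K^n-e_i}\bigr) \le d(\zeta_\infty+\beta_\infty)\,\Delta t\,\Delta x^{p-1},
\]
so that the estimates \eqref{eq:hn} hold verbatim with $a_\infty$ replaced by $\zeta_\infty+\beta_\infty$. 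With these bounds in force, every subsequent line of the proof of Lemma \ref{lem:estim1} is reproduced unchanged; the one point requiring care is that the genuinely $a_\infty$-dependent terms (arising from $|\int_{t^n}^{t^{n+1}}(a(s,X^n)-a(s,\hat{Y}^n))\,ds|\le 2a_\infty\Delta t$) still be absorbed into $\Delta t\Delta x$, which follows from \eqref{CFLgene} since it forces $(\zeta_\infty+\beta_\infty)\Delta t\le\Delta x_i$ and hence $a_\infty\Delta t^2\le C\,\Delta t\Delta x$. One thus recovers \eqref{eq:estim1:final}, now with a constant $C_p$ depending on $a_\infty,\zeta_\infty,\beta_\infty,d$ and $p$.

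Finally I would transcribe Section \ref{sec:proof} word for word: the triangle inequality \eqref{ineq}, the initial-datum bound \eqref{boundinit}, the second-term bound \eqref{ineq1} through Lemma \ref{lem:ZY}, and the first-term bound \eqref{ineq2} through the updated Lemma \ref{lem:estim1}, combine exactly as before to yield the conclusion of Theorem \ref{TH} for the scheme \eqref{schemegene}. I do not expect a genuine obstacle anywhere; the only substantive content is the recognition -- already encoded in \eqref{relbeta} -- that matching the drift suffices, and the mildly delicate spot is simply the recomputation of the $h^n$ moment bounds under \eqref{hypbeta}--\eqref{CFLgene}, which is routine.
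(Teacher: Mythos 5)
Your proposal is correct and takes essentially the same approach as the paper: the paper's own proof is just the observation that \eqref{relbeta} restores Proposition \ref{propZ}~(i) (via the drift computation displayed right before the statement) so that the proof of Theorem \ref{TH} can be rerun verbatim. You simply make explicit the routine verifications (Lemma \ref{lem:transport:rho}, and the $h^n$ moment bounds with $a_\infty$ replaced by $\zeta_\infty+\beta_\infty$ under \eqref{CFLgene}) that the paper leaves implicit.
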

Indeed, thanks to \eqref{relbeta}, Proposition \ref{propZ} (i) holds true.
Thus, we can redo the proof of Theorem \ref{TH} in this framework.

\begin{example} { \ }
\begin{itemize}
\item We first observe that, for $\zeta^{n}_{J+\frac 12 e_i} = ({a_i}^n_J)^+$ and 
$\beta^{n}_{J-\frac 12 e_i} = -({a_i}^n_J)^-$,
\eqref{relbeta} is satisfied. This choice corresponds to the upwind scheme \eqref{dis_num}
considered in this paper.
\item If we now consider the Rusanov scheme, we then have
$\zeta_{J+\frac 12 e_i}^{n} = \frac 12 ({a_i}^n_J + a_\infty)$ and 
$\beta_{J-\frac 12 e_i}^{n} = \frac 12 (-{a_i}^n_J+ a_\infty)$.
We easily check that \eqref{hypbeta}
and \eqref{relbeta} are satisfied.
Thus our result also shows that the Rusanov scheme, when applied to a conservative transport
equation with a velocity field that is only $L^\infty$ and OSL, has an order $1/2$
in distance $W_p$, $p\geq 1$.
\end{itemize}
\end{example}

Finally, as pointed out in Remark \ref{otherupwind}, we observe that for another 
traditional upwind scheme given by:
$$
\rho_{J}^{n+1} = \rho_{J}^n - \sum_{i=1}^{d} \frac{\Delta t}{\Delta x_i}
\Big(({a_i}^n_{J+\frac 12 e_i})^+ \rho_{J}^n - ({a_i}^n_{J+\frac 12 e_i})^- \rho_{J+e_i}^n 
-({a_i}^n_{J-\frac 12 e_i})^+ \rho_{J-e_i}^n + ({a_i}^n_{J-\frac 12 e_i})^- \rho_{J}^n \Big),
$$
where ${a_i}^n_{J+\frac 12 e_i} = \frac{1}{\Delta t}\int_{t^n}^{t^{n+1}} a_i(s,x_{J+\frac 12 e_i})\,ds$,
we have
$$
\E^n_J(X^{n+1}-X^n) = \int_{t^n}^{t^{n+1}} \sum_{i=1}^d \biggl( a_i(s,X^n+\frac{\Delta x_i}{2} e_i)^+
- a_i(s,X^n-\frac{\Delta x_i}{2} e_i)^-\biggr)e_i\, ds.
$$
Then the statements of Proposition \ref{propZ} (i) does not hold. 
Consequently, we cannot use the techniques developed in this paper.

\section{Application of the technique to a scheme on unstructured meshes}\label{sec:unstr}

In this section, we explain shortly how to obtain the error estimate for a forward semi-Lagrangian 
scheme defined on an unstructured mesh. 
For the sake of simplicity, we present the case of a triangular mesh in dimension 2, 
but this approach can be easily extended to any mesh made of simplices, in any dimension. 
Basic references on forward semi-Lagrangian schemes are 
\cite{CRS} and \cite{Denavit} (although they concern schemes on structured quadrilateral meshes). 

\subsection{Numerical algorithm}

Let us consider a triangular mesh $\calT = (T_k)_{k\in \ZZ}$ with nodes $(x_i)_{i\in \ZZ}$. We assume this mesh to be conformal: A summit cannot belong to an open edge of the grid. 
The triangles $(T_k)_{k \in \ZZ}$ are assumed to satisfy $\bigcup_{k\in\ZZ} T_k = \RR^2$ and $T_k \cap T_l = \emptyset$ if $k \neq l$ (in particular, the cells are here not assumed to be closed nor open). For any triangle $T$ with summits $x$, $y$, $z$, we will use also the notation
$(x,y,z) = T$. We denote by $\calV(T) = \calV(x,y,z)$ the area of this
triangle, and $h(T)$ its height (defined as the minimum of the three heights of the triangle $T$).
We make the assumption that the mesh satisfies $\hbar:=\inf_{k\in \ZZ} h(T_k) >0$.

For any node $x_i$, $i \in \ZZ$, we denote by $K(i)$ the set of 
indices indexing triangles that have $x_i$ as a summit, and we denote by $\calT_i$ the set of all 
triangles of $\calT$ that have $x_i$ as a summit: thus $\calT_i = \{T_k ; k \in K(i) \}$. 

For any triangle $T_k$, $k \in \ZZ$, we denote by $I(k) = \{I_1(k), I_2(k), I_3(k)\}$ 
the set of indices indexing the summits of $T_k$
(for some arbitrary order, whose choice has no importance for the sequel). 
\vskip 4pt

Here is the derivation of the forward semi-Lagrangian scheme, 
whose rigorous definition is given next, in 
\eqref{schemeT}. Let us emphasize that this is not a finite volume scheme. 

\begin{itemize}
\item 
For an initial distribution $\rho^{ini}$
of the PDE \eqref{eq:transp}, define
the probability weights $(\rho^0_{i})_{i \in \ZZ}$
through the following procedure: 
Consider the one-to-one mapping $\kappa : 
\ZZ \ni i \mapsto \kappa(i) \in \ZZ$ such that, for 
each $i \in \ZZ$,  
the node $x_{i}$ belongs to 
the triangle $T_{\kappa(i)}$
(such a triangle is unique since 
the cells  $(T_k)_{k\in \ZZ}$
are disjoint); 
$\kappa$ is thus a canonical way to associate a cell 
with a node; then,
for all $i \in \ZZ$, 
let
$\rho^0_{i} = \rho^{ini}(T_{\kappa(i)})$.
Observe from 
\eqref{eq:wp:condition:initiale}
 that 
$\rho^0_{\Delta x} = \sum_{j \in \ZZ} \rho^0_{j} \delta_{x_{j}}$ is an approximation of $\rho^{ini}$. 
\item Assume that, for a given $n \in \NN$,  
we already have 
probability weights  
$(\rho_i^n)_{i\in\ZZ}$ 
such that 
$\rho^n_{\Delta x} =
\sum_{j \in \ZZ} \rho^n_{j} \delta_{x_{j}}$
is an approximation of $\rho(t^n,\cdot)$, 
where $\rho$ is the solution to 
\eqref{eq:transp}
with 
$\rho^{ini}$
as initial condition.
Similar to \eqref{def:ai}, let us denote 
$a_i^n= {\Delta t}^{-1} \int_{t^n}^{t^{n+1}} a(s,x_i)\,ds$, and $x_i^n= x_i + a_i^n \Delta t$, 
for $i \in \ZZ$.
Under the CFL-like condition
\begin{equation}\label{CFLT}
a_\infty \Delta t \leq \hbar,
\end{equation}
$x_i^n$ belongs to one (and only one) 
of the elements of $\calT_i$.
We denote by $k_i^n$ the index of this triangle: $x_i^n\in T_{k_i^n}$.
\item The basic idea now is to use a linear splitting rule between the summits of 
the triangle $T_{k_i^n}$: the mass $\rho_i^n$ is sent to these three points 
$x_{I_1(k_i^n)}$, $x_{I_2(k_i^n)}$, $x_{I_3(k_i^n)}$ according to the 
{\em barycentric coordinates} of $x_i^n$ in the triangle. 
In some sense, this scheme is a natural extension of the one-dimensional upwind scheme to greater dimensions (see the interpretation of the one-dimensional upwind scheme provided in Remark \ref{ex1D}). 
\end{itemize}
\begin{center}
\begin{tikzpicture}
\draw (0,0) -- (5,0) -- (3,4) -- (0,0);
\draw (0,0) node[below]{$x_i = x_{I_1(k_i^n)}$};
\draw (5,0) node[below]{$x_{I_2(k_i^n)}$};
\draw (3,4) node[above]{$x_{I_3(k_i^n)}$};
\draw (2.5,2) node[below]{$x_i^n$};
\draw [->] (0,0) -- (2.5,2);
\draw [dashed] (5,0) -- (2.5,2) -- (3,4);
\end{tikzpicture}
\end{center}
Let $T = (x,y,z) \in \calT$, and $\xi \in T$. 
We define the barycentric coordinates of $y$ with respect 
to $x$, $y$ and $z$, $\lambda_{x}^{T}$, 
$\lambda_{y}^{T}$ and $\lambda_{z}^{T}$: 
\begin{equation}\label{lambdaetal}
\lambda_{x}^{T}(\xi) = \frac{\calV(\xi,y,z)}{\calV(T)}, \quad 
\lambda_{y}^{T}(\xi) = \frac{\calV(\xi,x,z)}{\calV(T)}, \quad 
\lambda_{z}^{T}(\xi) = \frac{\calV(\xi,x,y)}{\calV(T)}, \quad 
\end{equation}
and then have $\xi = \lambda_{x}^{T}(\xi) x + \lambda_{y}^{T}(\xi) y 
+ \lambda_{z}^{T}(\xi) z$. 
Note also that $\lambda_{x}^{T}(\xi) + \lambda_{y}^{T}(\xi) 
+ \lambda_{z}^{T}(\xi) = 1$. Therefore, we have the following fundamental 
property, which will be used in the sequel: 
\begin{equation}
\label{fundapropo}
\lambda_{x}^{T}(\xi) (x - \zeta) + 
\lambda_{y}^{T}(\xi) (y - \zeta)
+ \lambda_{z}^{T}(\xi) (z - \zeta) = \xi - \zeta,
\end{equation}
for any $\zeta \in \RR^2$. \\

Considering $x_i^n \in T_{k_i^n}$, we will use the barycentric coordinates of $x_i^n$ with respect to the 
summits
$(x_j)_{j \in I(k_i^n)}$ of $T_{k_i^n}$. For 
notational convenience, let us denote 
\[
\lambda_{i,j}^n = \lambda_{x_j}^{T} (x_i^n) \qquad 
\textrm{\rm when} \quad T = T_{k_i^n}. 
\]
The numerical scheme reads:
\begin{equation}\label{schemeT}
\rho_j^{n+1} = \sum_{i\in \gamma(j)} \rho_i^n \lambda_{i,j}^n, 
\qquad j \in \ZZ, \ n \in \NN,
\end{equation}
where, for a given $j \in \ZZ$, we denote by $\gamma(j)$ the set of all indices $i \in \ZZ$ indexing
nodes $x_i$ such that $x_i + a_i^n \Delta t$ belongs to a triangle that has $x_j$ as a summit : 
\[
\gamma(j) = \{ i \in \ZZ \,  / \mbox{ there exists } k \in K(j) \mbox{ such that } x_i + a_i^n \Delta t \in T_k\}. 
\]

\subsection{Probabilistic interpretation}

As in Section \ref{sec:proba}, we define a random characteristic associated 
to the scheme \eqref{schemeT}.
Letting $\Omega= \ZZ^\NN$ and 
defining
the canonical process $(I^n)_{n\in\NN}$
as 
we defined 
$(K^n)_{n\in\NN}$
above (the definition is the same but we prefer to use 
the letter $I$ instead of $K$; we make this clear right below), we equip $\Omega$ with the Kolmogorov 
$\sigma$-field $\calA$ and with a collection of probability measures
$(\PP_\mu)_{\mu\in\calP(\ZZ)}$, such that, for each $\mu\in \calP(\ZZ)$, 
$(I^n)_{n\in\NN}$ is a 
time-inhomogeneous Markov chain under $\PP_\mu$, with 
$\mu$ as initial distribution and with 
transition matrix:
\begin{equation}
\label{defPij}
P_{i,j}^n =
\left\{\begin{array}{ll}
\ds \lambda_{i,j}^n &\textrm{when } j \in I(k_i^n), \\
0 \quad &\textrm{otherwise},
\end{array}\right.
\end{equation}
that is to say, more precisely, 
\[
P_{i,j}^n =
\left\{\begin{array}{ll}
\ds \lambda_{x_{j}}^{T_{k_i^n}}(x_i^n) &\textrm{when} \ j \in  I(k_i^n), 
\\
0 \quad &\textrm{otherwise},
\end{array}\right.
\]
with the notation in \eqref{lambdaetal}.
Pay attention that the chain $(I^{n})_{n \in \NN}$ here takes values in the set of indices 
indexing the nodes of the grid whilst the chain 
$(K^{n})_{n \in \NN}$
used in the analysis 
of the upwind scheme (see Section 
\ref{sec:proba}) takes values in the set of indices indexing the 
cells of the grid. This is the rationale for using different letters.

Then, we let the random characteristics be the sequence of random variables 
$(X^n)_{n\in\NN}$ from $(\Omega,\calA)$ into $\RR^2$ defined by
\begin{equation}\label{defcharac}
\forall\, n\in \NN, \ \forall \omega\in \Omega,\quad X^n(\omega) = x_{I^n(\omega)}.
\end{equation}

We now check that Proposition \ref{propZ} 
still holds true with this definition of the random characteristics:

\begin{proposition}\label{propZ2}
Let $(X^n)_{n \in \NN}$ be the random characteristics defined by
\eqref{defPij}--\eqref{defcharac}.

(i) 
Defining $\rho_{\Delta x}^n = \sum_{j\in \ZZ} \rho_j^n \delta_{x_j}$, we have
 $\rho^n_{\Delta x} = X^n\,_\#\PP_{\rho_{\Delta x}^0}$.

(ii) For all $j\in \ZZ$, we have, with probability one under $\PP_j$, 
$\E^n_j(X^{n+1}-X^n) = {a^n_{I^n}} \Delta t$.

\end{proposition}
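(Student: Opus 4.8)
The plan is to follow exactly the two-step strategy used to prove Proposition \ref{propZ}, adapted to the triangular geometry. The backbone is an analogue of Lemma \ref{lem:transport:rho} asserting that the numerical weights are transported by the Markov chain, namely $\rho^n = I^n{}_\# \PP_{\rho^0}$; once this is available, claim (i) is immediate and claim (ii) reduces to a direct computation of a conditional expectation.

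First I would reconcile the scheme \eqref{schemeT} with the transition matrix \eqref{defPij}. The point to check is that, for fixed $j$, the index set $\gamma(j)$ coincides with the set of indices $i$ for which $P_{i,j}^n \neq 0$: by definition $i \in \gamma(j)$ means that $x_i^n = x_i + a_i^n \Delta t$ lies in some triangle of $\calT$ admitting $x_j$ as a summit, which---since under the CFL condition \eqref{CFLT} the point $x_i^n$ belongs to the \emph{unique} triangle $T_{k_i^n}$---is equivalent to $j \in I(k_i^n)$, i.e. to $P_{i,j}^n = \lambda_{i,j}^n$. Consequently \eqref{schemeT} rewrites as $\rho_j^{n+1} = \sum_{i \in \ZZ} \rho_i^n P_{i,j}^n$, which is precisely the forward Kolmogorov equation for the chain. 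A straightforward induction, mirroring the one in the proof of Lemma \ref{lem:transport:rho}, then yields $\rho^n = I^n{}_\# \PP_{\rho^0}$, and claim (i) follows by pushing forward through the map $L \mapsto x_L$, exactly as in Proposition \ref{propZ} (ii).

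For claim (ii) I would compute, conditionally on the chain sitting at $I^n$ at time $n$,
\[
\E^n_j\bigl(X^{n+1}-X^n\bigr) = \sum_{l \in I(k_{I^n}^n)} (x_l - X^n)\, P_{I^n,l}^n = \sum_{l \in I(k_{I^n}^n)} \lambda_{I^n,l}^n\,(x_l - x_{I^n}),
\]
using $X^n = x_{I^n}$ and the definition \eqref{defPij}. The key algebraic input is then the barycentric identity \eqref{fundapropo}, applied with $\zeta = x_{I^n}$ to the triangle $T = T_{k_{I^n}^n}$ whose summits are the $(x_l)_{l \in I(k_{I^n}^n)}$: it gives $\sum_{l} \lambda_{I^n,l}^n (x_l - x_{I^n}) = x_{I^n}^n - x_{I^n}$. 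Since by construction $x_{I^n}^n = x_{I^n} + a_{I^n}^n \Delta t$, the right-hand side equals $a_{I^n}^n \Delta t$, which is precisely the claim.

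The computations are routine; the only point requiring care is the first step, where the CFL condition \eqref{CFLT} must be invoked to guarantee that $x_i^n$ lies in a single triangle $T_{k_i^n}$, so that the splitting weights $(\lambda_{i,j}^n)_j$ are well defined and the equivalence $i \in \gamma(j) \Leftrightarrow j \in I(k_i^n)$ holds. I expect no real obstacle beyond this bookkeeping: in contrast with the Euclidean upwind setting, the barycentric identity \eqref{fundapropo} renders the consistency relation in (ii) an \emph{exact} equality, so none of the difficulties attached to the interface-velocity variant (for which the analogue of Proposition \ref{propZ} (i) fails) arise here.
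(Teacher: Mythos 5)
Your proposal is correct and follows essentially the same route as the paper: part (i) is obtained by identifying \eqref{schemeT} with the Kolmogorov forward equation of the chain with transition matrix \eqref{defPij} and repeating the induction of Lemma \ref{lem:transport:rho} and Proposition \ref{propZ} (ii), and part (ii) is the same direct conditional-expectation computation closed by the barycentric identity \eqref{fundapropo} with $\zeta = x_{I^n}$. The only difference is that you spell out the bookkeeping (the equivalence $i \in \gamma(j) \Leftrightarrow j \in I(k_i^n)$ under the CFL condition \eqref{CFLT}) that the paper leaves implicit.
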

\begin{proof}
(i) This result follows straightforwardly from the proof of
(ii) in 
 Proposition \ref{propZ}, but  
with the transition matrix defined in \eqref{defPij}.

(ii) From a direct computation, we have
$$
\E^n_j(X^{n+1} - X^n) = \sum_{\ell \in I(k_{I^n}^n)} \lambda_{I^n,\ell}^n (x_{\ell} - x_{I^n}).
$$
Thanks to Property \eqref{fundapropo}, 
$$
\E^n_j(X^{n+1} - X^n) = x^n_{I^n}-x_{I^n} = a_{I^n}^n \Delta t,
$$
which completes the proof.
\end{proof}

\subsection{Convergence order}

By the same token as in Section \ref{sec:ordre}, we can use 
Proposition \ref{propZ2} and Lemmas \ref{lem:ZY}
and \ref{lem:estim1} to prove that the numerical scheme \eqref{schemeT} is of order $1/2$: 

\begin{theorem}\label{TH2}
Let $\rho^{ini} \in \calP_p(\RR^d)$ for $p\geq 1$. Let us assume that 
$a\in L^\infty([0,\infty);L^\infty(\RR^2))^2$
and satisfies the OSL condition \eqref{eq:OSL}.
Let $\rho=Z_\# \rho^{ini}$ be the unique measure solution to the aggregation equation 
with initial datum $\rho^{ini}$ in the sense of Theorem \ref{thPR}.
Let us consider a triangular conformal mesh $(T_k)_{k\in \ZZ}$ with nodes $(x_j)_{j\in \ZZ}$ 
such that $\hbar = \inf_{k\in \ZZ} h(T_k) >0$. We denote by 
$\Delta x$ the longest edge in the mesh. 
We define
$$
\rho_{\Delta x}^n = \sum_{j\in \ZZ} \rho_j^n \delta_{x_j},
$$
where the approximation sequence $(\rho_j^n)$ is computed thanks to the scheme \eqref{schemeT}.
We assume that the CFL condition \eqref{CFLT} holds.
Then, there exists a non-negative constant $C$, such that for all $n\in \NN^*$, 
$$
W_p(\rho(t^n),\rho_{\Delta x}^n) \leq C e^{{2}\int_0^{t^n}\alpha(s)ds}\bigl( \sqrt{t^n \Delta x} +
{\Delta x} \bigr).
$$
\end{theorem}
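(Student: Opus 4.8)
The plan is to reproduce, almost line by line, the proof of Theorem \ref{TH} carried out in Section \ref{sec:proof}; the only genuinely new ingredient is a moment estimate for the jumps of the Markov chain $(I^n)_{n\in\NN}$. First I would invoke Proposition \ref{propZ2}(i) to write $\rho^n_{\Delta x} = X^n{}_\#\PP_{\rho^0}$, where $\rho^0=(\rho^0_j)_{j\in\ZZ}$ is the initial law on $\ZZ$ defined by $\rho^0_j=\rho^{ini}(T_{\kappa(j)})$ and $\rho^0_{\Delta x}=\sum_{j\in\ZZ}\rho^0_j\delta_{x_j}$. As in Section \ref{sec:proof}, I would bring in the deterministic Euler scheme $Y^n$ of Lemma \ref{lem:ZY} (viewed as a map $x\mapsto Y^n(x)$ with $Y^0=\mathrm{id}$) and its randomized version $\hat Y^n$ of Lemma \ref{lem:estim1} (with $\hat Y^0=X^0=x_{I^0}$), so that $\hat Y^n(\omega)=Y^n(X^0(\omega))$ and hence $Y^n{}_\#\rho^0_{\Delta x}=\hat Y^n{}_\#\PP_{\rho^0}$. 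The triangle inequality then yields the very same three-term splitting as in \eqref{ineq}:
\begin{align*}
W_p(\rho^n_{\Delta x},\rho(t^n)) \leq{}& W_p\bigl(X^n{}_\#\PP_{\rho^0},\hat Y^n{}_\#\PP_{\rho^0}\bigr)
+ W_p\bigl(Y^n{}_\#\rho^0_{\Delta x},Z(t^n)_\#\rho^0_{\Delta x}\bigr) \\
&{}+ W_p\bigl(Z(t^n)_\#\rho^0_{\Delta x},Z(t^n)_\#\rho^{ini}\bigr).
\end{align*}

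The last two terms are handled exactly as in Section \ref{sec:proof}, since their treatment never uses the Cartesian structure. For the initial-datum term I would use the Lipschitz bound \eqref{bound1Z} of the Filippov flow together with $W_p(\rho^0_{\Delta x},\rho^{ini})\leq\Delta x$; the latter follows from \eqref{eq:wp:condition:initiale} applied to the transport map sending $x\in T_{\kappa(j)}$ onto $x_j$, which is admissible because $x_j$ is a summit of $T_{\kappa(j)}$ and every edge has length at most $\Delta x$, so $\mathrm{diam}(T_{\kappa(j)})\leq\Delta x$. For the second term, the bound $W_p(Y^n{}_\#\rho^0_{\Delta x},Z(t^n)_\#\rho^0_{\Delta x})\leq\sup_j|Y^n(x_j)-Z(t^n,x_j)|$ from \eqref{eqWasser}, combined with Lemma \ref{lem:ZY} and the CFL condition \eqref{CFLT}, gives a contribution of size $C\,e^{2\int_0^{t^n}\alpha}\sqrt{t^n\Delta x}$.

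The only term requiring a new verification is the first. By \eqref{eqWasser:omega} it is bounded by $\E_{\rho^0}[|X^n-\hat Y^n|^p]^{1/p}$, so it suffices to check that Lemma \ref{lem:estim1} still applies. Its proof rests on three facts about the increment $h^n=X^{n+1}-X^n-\E^n(X^{n+1}-X^n)$: that $\E^n(h^n)=0$, that $|h^n|\leq 2\Delta x$, and that $\E^n(|h^n|^p)\leq C\Delta t\,\Delta x^{p-1}$. The first is immediate from the consistency relation $\E^n_j(X^{n+1}-X^n)=a^n_{I^n}\Delta t=\int_{t^n}^{t^{n+1}}a(s,X^n)\,ds$ of Proposition \ref{propZ2}(ii); the second holds because $X^{n+1}$ is a summit of the triangle $T_{k^n_{I^n}}$, which already has $x_{I^n}$ as a summit, whence $|X^{n+1}-X^n|\leq\Delta x$. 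The heart of the matter is the third estimate. Here I would use that the chain can only jump to the (at most two) summits $x_\ell$ of $T_{k^n_{I^n}}$ distinct from $x_{I^n}$, each with transition probability $\lambda^n_{I^n,\ell}=\lambda^{T}_{x_\ell}(x^n_{I^n})$. Since $\lambda^T_{x_\ell}$ vanishes at the summit $x_{I^n}$ and is affine with gradient of norm $1/h_\ell\leq 1/\hbar$ (where $h_\ell\geq h(T)\geq\hbar$ is the height relative to $x_\ell$), and since $|x^n_{I^n}-x_{I^n}|=|a^n_{I^n}|\Delta t\leq a_\infty\Delta t$, one gets $\lambda^n_{I^n,\ell}\leq a_\infty\Delta t/\hbar$. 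Combined with $|x_\ell-x_{I^n}|\leq\Delta x$ this gives $\E^n(|X^{n+1}-X^n|^p)\leq (2a_\infty/\hbar)\,\Delta t\,\Delta x^{p}$, i.e. the required $C\Delta t\,\Delta x^{p-1}$ with $C=2a_\infty(\Delta x/\hbar)$.

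I expect this barycentric estimate to be the main obstacle, and the precise point where the geometry of the unstructured mesh enters: the constant now involves the shape-regularity ratio $\Delta x/\hbar$ rather than being purely dimensional as in the Cartesian case. Once this moment bound is in hand, the two-step induction in the proof of Lemma \ref{lem:estim1} goes through verbatim and delivers $\E_{\rho^0}[|X^n-\hat Y^n|^p]^{1/p}\leq C_p\,e^{\int_0^{t^n}\alpha}(\sqrt{t^n\Delta x}+\Delta x)$. Summing the three contributions then yields the announced estimate, with $C$ depending on $p$, $\rho^{ini}$, $a_\infty$ and the mesh regularity.
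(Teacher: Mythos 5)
Your proposal is correct and follows essentially the same route as the paper: reduce to the three-term decomposition of Section \ref{sec:proof}, note that the last two terms are handled verbatim, and observe that the only genuinely new ingredient is the jump-probability bound $\sum_{j\neq i}\lambda^n_{i,j}\leq 2a_\infty\Delta t/\hbar$, which restores the second and third estimates of \eqref{eq:hn} with constants depending on $\Delta x/\hbar$ (hence on $1/\hbar$, since $\Delta x\leq 1$). The only cosmetic difference is that you bound $\lambda^n_{I^n,\ell}$ via the gradient of the affine barycentric coordinate, $|\nabla\lambda^{T}_{x_\ell}|=1/h_\ell\leq 1/\hbar$, evaluated along the displacement $x^n_{I^n}-x_{I^n}$, whereas the paper uses the ratio-of-areas formula \eqref{lambdaetal}; the two computations are equivalent.
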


\begin{proof}
In order to repeat the arguments developed 
in Section \ref{sec:ordre}, the main point is to check 
the analogue of
the second inequality in  
\eqref{eq:hn}.

To do so, we need to bound $\lambda^n_{i,j}$ for 
$j \not = i$. Clearly,
\begin{equation*}
\sum_{j \not = i} \lambda^n_{i,j}
=
\frac{{\mathcal V}(x_{i}^n,x_{i},z)+
{\mathcal V}(x_{i}^n,x_{i},y)
}{{\mathcal V}(T_{k_{i}^n})},
\end{equation*}
where $y$ and $z$ are the two 
summits of $T_{k_{i}^n}$ that are different from $x_{i}$, namely 
$\{y,z\} = I(k_{i}^n) \setminus \{x_{i}\}$. 
Since $\vert x_{i}^n - x_{i} \vert \leq a_{\infty} \Delta t$,
we have
\begin{equation*}
{\mathcal V}(x_{i}^n,x_{i},y)
\leq \frac{a_{\infty} \Delta t  \Delta x}{2}, 
\quad
{\mathcal V}(x_{i}^n,x_{i},z)
\leq \frac{a_{\infty} \Delta t  \Delta x}{2},
\end{equation*}
from which we get 
\begin{equation*}
\sum_{j \not = i} \lambda^n_{i,j}
\leq \frac{a_{\infty} \Delta t  \Delta x}{{\mathcal V}(T_{k_{i}^n})} \leq 2 a_{\infty} \frac{\Delta t}{\hbar}. 
\end{equation*}
This permits to implement the strategy proposed in 
Lemma \ref{lem:estim1}, as long as the constants therein are allowed to depend upon $1/\hbar$, which shows the need for requiring $\hbar >0$.
\end{proof}

\section{An interpolation result}

\begin{proposition}[\cite{Filippo}]
There exists a constant $C$ such that, for any $f$, $g$, non-negative functions 
in $BV(\RR^d)$ such that $\int_{\RR^d} f  = \int_{\RR^d} g = 1$, it holds that 
\[
|| f - g || \leq C | f - g |_{BV}^{1/2} W_1(f,g)^{1/2},
\]
where $| \cdot |_{BV}$ denotes the $BV$ semi-norm. 
\end{proposition}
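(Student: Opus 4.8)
The plan is to set $h:=f-g$, which satisfies $\int_{\RR^d} h = 0$ and $h \in BV(\RR^d)$ with $|h|_{BV} = |f-g|_{BV}$, and to prove the interpolation inequality $\|h\|_{L^1} \leq C\, |h|_{BV}^{1/2} W_1(f,g)^{1/2}$ (recall that the norm in the statement is the $L^1$ norm, as in the remark motivating this result). The strategy rests on combining two dual representations through a mollification at scale $\varepsilon>0$, then optimizing over $\varepsilon$. On one side I use the $L^1$--$L^\infty$ duality $\|h\|_{L^1} = \sup\{\int_{\RR^d}\psi\,h\,dx : \psi \in {\mathcal C}_c(\RR^d),\ \|\psi\|_\infty \leq 1\}$; on the other side I use the Kantorovich--Rubinstein duality, namely that $\int_{\RR^d} \Psi\,(f-g)\,dx \leq K\,W_1(f,g)$ for any $K$-Lipschitz function $\Psi$ (valid since $f,g$ are probability densities, with $W_1(f,g)<\infty$, otherwise the inequality is trivial).

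First I would fix an even mollifier: $\varphi \in {\mathcal C}_c^\infty(\RR^d)$, $\varphi \geq 0$, $\varphi(-x)=\varphi(x)$, $\int_{\RR^d}\varphi = 1$, and set $\varphi_\varepsilon(x)=\varepsilon^{-d}\varphi(x/\varepsilon)$. For a test function $\psi$ with $\|\psi\|_\infty \leq 1$, I split
\[
\int_{\RR^d} \psi\,h\,dx = \int_{\RR^d} (\psi * \varphi_\varepsilon)\,h\,dx + \int_{\RR^d} \psi\,(h - h*\varphi_\varepsilon)\,dx,
\]
where the second equality uses that $\varphi$ is even, so $\int (\psi*\varphi_\varepsilon)h = \int \psi\,(h*\varphi_\varepsilon)$. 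The two pieces are then estimated by the two dualities respectively. For the first piece, since $\nabla(\psi*\varphi_\varepsilon)=\psi*\nabla\varphi_\varepsilon$ and $\|\nabla\varphi_\varepsilon\|_{L^1}=\varepsilon^{-1}\|\nabla\varphi\|_{L^1}$, the function $\psi*\varphi_\varepsilon$ is $(C_1/\varepsilon)$-Lipschitz with $C_1=\|\nabla\varphi\|_{L^1}$; Kantorovich--Rubinstein duality yields $\int (\psi*\varphi_\varepsilon)\,h\,dx \leq (C_1/\varepsilon)\,W_1(f,g)$. For the second piece, I bound it by $\|h - h*\varphi_\varepsilon\|_{L^1}$, and here I invoke the standard $BV$ regularization estimate $\|h - h*\varphi_\varepsilon\|_{L^1} \leq C_2\,\varepsilon\,|h|_{BV}$, with $C_2 = \int_{\RR^d}|z|\varphi(z)\,dz$, which follows from $\|h - h*\varphi_\varepsilon\|_{L^1} \leq \int \varphi_\varepsilon(y)\,\|h(\cdot-y)-h\|_{L^1}\,dy$ together with the translation characterization of $BV$, $\|h(\cdot-y)-h\|_{L^1}\leq |y|\,|h|_{BV}$.

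Taking the supremum over $\psi$ then gives, for every $\varepsilon>0$,
\[
\|h\|_{L^1} \leq \frac{C_1}{\varepsilon}\,W_1(f,g) + C_2\,\varepsilon\,|h|_{BV}.
\]
Optimizing the right-hand side in $\varepsilon$ by balancing the two terms, i.e. choosing $\varepsilon = \bigl(C_1 W_1(f,g)/(C_2|h|_{BV})\bigr)^{1/2}$, produces $\|h\|_{L^1} \leq 2\sqrt{C_1 C_2}\,W_1(f,g)^{1/2}|h|_{BV}^{1/2}$, which is the claim with $C=2\sqrt{C_1 C_2}$. The degenerate cases are immediate: if $|h|_{BV}=0$ then $h$ is constant with zero integral, hence $h=0$; if $W_1(f,g)=0$ then $f=g$ a.e.; in both situations the inequality holds trivially, so the optimization is only needed when both quantities are positive.

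I do not expect a serious obstacle here, since the argument is a clean interpolation between the Lipschitz (dual $W_1$) and $BV$ scales. The only point requiring mild care is the translation estimate $\|h(\cdot-y)-h\|_{L^1}\leq |y|\,|h|_{BV}$ for a general $BV$ function: it is elementary for $h \in W^{1,1}$ (where it reads $\int_0^1\!\int |\nabla h(x-ty)\cdot y|\,dx\,dt$), and I would extend it to $BV$ by smoothing $h$ and passing to the limit using lower semicontinuity of the total variation, or simply quote it as the standard difference-quotient characterization of $BV$. One should also restrict the supremum defining $\|h\|_{L^1}$ to continuous compactly supported $\psi$ so that the duality pairing and the convolution manipulations are unambiguous, which is harmless since $h \in L^1$.
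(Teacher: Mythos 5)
Your proof is correct and follows essentially the same route as the paper's: mollify at scale $\varepsilon$, split the duality pairing into a piece controlled by $\varepsilon\,|f-g|_{BV}$ (via the translation characterization of $BV$) and a piece controlled by $\varepsilon^{-1}W_1(f,g)$ (via Kantorovich--Rubinstein, since the mollified test function is $\varepsilon^{-1}\|\nabla\varphi\|_{L^1}$-Lipschitz), then optimize in $\varepsilon$. The only cosmetic difference is that you move the convolution onto the test function from the start (using evenness of the mollifier), whereas the paper mollifies $f-g$ and transfers the convolution in the second term; the estimates and the choice of $\varepsilon$ are identical.
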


\begin{proof}
Let $h \in L^\infty(\RR^d)$ such that $|| h ||_{L^\infty} = 1$. Let $\rho$ be a smoothing kernel and 
$\rho_\varepsilon(x) = \rho(x/\varepsilon)/
\varepsilon^d$, for any $\varepsilon > 0$. 
Let us denotes $h_\varepsilon = h \star \rho_\varepsilon$, 
$f_\varepsilon = f \star \rho_\varepsilon$, $g_\varepsilon = g \star \rho_\varepsilon$, where $\star$ 
stands for the convolution product.
One has 
\[
\int_{\RR^d} h(f - g) = \int_{\RR^d} h \bigl(f - g - (f_\varepsilon - g_\varepsilon)\bigr) + 
\int_{\RR^d} h (f_\varepsilon - g_\varepsilon). 
\]
Let us now estimate both integrals in the right hand term above. On the one hand, we have
\[
\int_{\RR^d} h \bigl(f - g - (f_\varepsilon - g_\varepsilon)\bigr) \leq || h ||_{L^\infty} || f - g - (f_\varepsilon - 
g_\varepsilon) ||_{L^1} 
\leq C \varepsilon | f - g |_{BV},
\]
for $C = \int_{\RR^d} |x| \rho(x) \, dx$; indeed, 
\begin{equation*}
\begin{split}
\int_{\RR^d} |f - f_\varepsilon| &= 
\int_{\RR^d} \left| \int_{\RR^d}  \frac{(f(x) - f(x-y))\rho(y/\varepsilon)}{\varepsilon^d}\, dy \right| dx
\\ 
&\leq \int_{\RR^d} \int_{\RR^d} | f(x) - f(x - \varepsilon y)| 
\rho(y) \, dy \, dx 
\\
&= \int_{\RR^d} \biggl( \int_{\RR^d} | f(x) - f(x - \varepsilon y)| \, dx \biggr) \, \rho(y) \, dy 
\leq \varepsilon | f |_{BV} \int_{\RR^d} |y| \rho(y) \, dy , 
\end{split}
\end{equation*}
the last inequality being due to the fact that $\int_{\RR^d} | f(x) - f(x - \varepsilon y)| \, dx \leq \varepsilon  | f |_{BV} |y|$ for any $y$: see for example Remark 3.25 in \cite{AFP}. On the other hand, 
\[
\int_{\RR^d} h (f_\varepsilon - g_\varepsilon) = \int_{\RR^d} h_\varepsilon (f - g) \leq W_1(f,g) || \nabla h_\varepsilon ||_{L^\infty},
\]
where we used the identity $W_1(f,g) = \sup_{h \in \mathcal{C}^1 / || \nabla h ||_{L^\infty} \leq 1} \int_{\RR^d} (f - g) h$ (see \cite{Villani1}). 
Furthermore, 
\[ 
|| \nabla h_\varepsilon||_{L^\infty} = || h \star \nabla \rho_\varepsilon ||_{L^\infty} 
\leq || h ||_{L^\infty} || \nabla \rho_\varepsilon ||_{L^1} \leq \frac{1}{\varepsilon} || \nabla \rho ||_{L^1}. 
\]
In the end, taking $C = \max(\int_{\RR^d} |x| \rho(x) dx, \int_{\RR^d} | \nabla \rho(x)| dx)$, one gets 
\[
\int_{\RR^d} h(f - g) \leq C \biggl( \varepsilon | f - g |_{BV} + \frac{1}{\varepsilon} W_1(f,g) \biggr). 
\]
``Optimizing'' in $\varepsilon$, that is to say, taking 
$\varepsilon = W_1(f,g)^{1/2}/|f - g|_{BV}^{1/2}$ (without any loss of generality, we can assume $|f - g|_{BV}^{1/2} \not =0$), one gets 
\[
|| f - g ||_{L^1} = \sup_{h \in L^\infty, || h ||_{L^\infty} = 1}\int_{\RR^d} h(f - g) \leq C | f - g |_{BV}^{1/2} W_1(f,g)^{1/2},
\]
which completes the proof. 
\end{proof}

\bigskip
{\bf Acknowledgements.}
NV acknowledges partial support from the french ``ANR blanche'' project Kibord~: ANR-13-BS01-0004.

\bigskip

\end{document}